\chardef\bslash=`\\ 
\newtheorem{thm}{Theorem}[section]
\newtheorem{cor}[thm]{Corollary}
\newtheorem{lem}[thm]{Lemma}
\newtheorem{prop}[thm]{Proposition}
\theoremstyle{definition}
\newtheorem{rem}[thm]{Remark}
\theoremstyle{remark}
\newcommand{\eval}[2][\right]{\relax
  \ifx#1\right\relax \left.\fi#2#1\rvert}
\begin{document}
\title{Local extrema for hypercube sections}

\author[L. Pournin]{Lionel Pournin}
\address{Universit{\'e} Paris 13, Villetaneuse, France}
\email{lionel.pournin@univ-paris13.fr}

\begin{abstract}
Consider the hyperplanes at a fixed distance $t$ from the center of the hypercube $[0,1]^d$. Significant attention has been given to determining the hyperplanes $H$ among these such that the $(d-1)$-dimensional volume of $H\cap[0,1]^d$ is maximal or minimal. In the spirit of a question by Vitali Milman, the corresponding local problem is considered here when $H$ is orthogonal to a diagonal or a sub-diagonal of the hypercube. It is proven in particular that this volume is strictly locally maximal at the diagonals in all dimensions greater than $3$ within a range for $t$ that is asymptotic to $\sqrt{d}/\!\log d$. At lower order sub-diagonals, this volume is shown to be strictly locally maximal when $t$ is close to $0$ and not locally extremal when $t$ is large. This relies on a characterisation of local extremality at the diagonals and sub-diagonals that allows to solve the problem over the whole possible range for $t$ in any fixed, reasonably low dimension.
\end{abstract}
\maketitle

\section{Introduction}\label{LEH.sec.0}

Given a fixed non-negative number $t$, consider the hyperplanes of $\mathbb{R}^d$ whose distance to the center of the hypercube $[0,1]^d$ is equal to $t$. In other words, these hyperplanes are tangent to the sphere of radius $t$ whose center coincides with that of $[0,1]^d$. Significant attention has been devoted to identifying, among these hyperplanes, the ones whose intersection with $[0,1]^d$ has the largest or the smallest possible $(d-1)$-dimensional volume. When $t$ is equal to $0$ (and the hyperplanes contain the center of $[0,1]^d$), this was solved by Keith Ball \cite{Ball1986} who proved that the $(d-1)$\nobreakdash-dimensional volume of the intersection with the hypercube $[0,1]^d$ of a hyperplane $H$ through its center is maximal precisely when $H$ is orthogonal to an order $2$ sub-diagonal of that hypercube, thereby solving a problem posed by Douglas Hensley \cite{Hensley1979}. Here, an \emph{order $n$ sub-diagonal of $[0,1]^d$} means the line segment between the centers of two opposite $(d-n)$\nobreakdash-dimensional faces of $[0,1]^d$. The order $d$ sub-diagonals of the hypercube are also simply referred to as its diagonals. The result from \cite{Ball1986} relies on a formula for the $(d-1)$\nobreakdash-dimensional volume of the intersection of an arbitrary hyperplane with the hypercube $[0,1]^d$ that takes the form of an improper integral. That formula, given below as Theorem \ref{LEH.sec.1.thm.1}, goes back to George P{\'o}lya's work \cite{Polya1913} and is discussed, for instance in \cite{Berger2010,FrankRiede2012,KonigKoldobsky2011,Zong2006}.

The case when $t$ is positive (and less than $\sqrt{d}/2$, the circumradius of the hypercube) was later considered by Vitali Milman who asked \cite{Konig2021b,KonigKoldobsky2011} whether the minimal and the maximal $(d-1)$-dimensional volume of $H\cap[0,1]^d$ is always achieved when $H$ is orthogonal to a diagonal or a sub-diagonal of $[0,1]^d$---here and in the remainder of the introduction, $H$ denotes a generic hyperplane at distance $t$ from the center of $[0,1]^d$. This question was solved, in dimensions $2$ and $3$ by Hermann K{\"o}nig and Alexander Koldobsky \cite{KonigKoldobsky2011}. In higher dimensions, it was shown by James Moody, Corey Stone, David Zach, and Artem Zvavitch~\cite{MoodyStoneZachZvavitch2013} that, if $t$ is greater than $\sqrt{d-1}/2$ (the distance between the center of the hypercube $[0,1]^d$ and the midpoint of any of its edges), then the volume of $H\cap[0,1]^d$ is maximal precisely when $H$ is orthogonal to a diagonal of $[0,1]^d$. It was further shown by Hermann K{\"o}nig \cite{Konig2021} that, if $d$ is at least $5$ and
$$
\frac{\sqrt{d}}{2}-\frac{1}{\sqrt{d}}<t<\frac{\sqrt{d}}{2}\mbox{,}
$$
then that volume is strictly locally maximal when $H$ is orthogonal to a diagonal of $[0,1]^d$. These higher dimensional results are based on an expression for the $(d-1)$-volume of $H\cap[0,1]^d$ that holds when $H$ separates a single vertex of the hypercube from all of its other vertices. A generalization of that expression to arbitrary hyperplanes is established in \cite{Pournin2021}. This formula, an alternative to the improper integral form, is a sum over the vertices of the hypercube on one side of $H$ (see Theorem \ref{LEH.sec.1.thm.2} below) that made it possible to extend the above mentioned result from \cite{MoodyStoneZachZvavitch2013} to when $d\geq5$ and $t$ is greater than $\sqrt{d-2}/2$ (the distance from the center of $[0,1]^d$ to that of a square face) \cite{Pournin2021}.

That problem has also been considered in the case of convex bodies other than the hypercube. The case of cross-polytopes is studied in \cite{LiuTkocz2020,Konig2021,MeyerPajor1988} and the more general case of balls for the $q$-norms in \cite{Koldobsky2005,MeyerPajor1988}. The corresponding problem for the regular simplices is considered in \cite{Konig2021,Webb1996}. Similar problems regarding the $d$-dimensional volume of the portion of the hypercube between two parallel hyperplanes or for the $(d-2)$-dimensional volume of the boundary of hypercube sections are studied, for instance in \cite{BartheKoldobsky2003,Konig2021,KonigKoldobsky2011}.

In the spirit of Vitali Milman's question, the (strict) local extremality of the $(d-1)$\nobreakdash-dimensional volume of $H\cap[0,1]^d$ is studied here when $H$ is orthogonal to a diagonal or a sub-diagonal of $[0,1]^d$ for all dimensions greater than $3$. The first result of the article deals with the case when $t$ is close to $0$.

\begin{thm}\label{LEH.sec.0.thm.0.2}
If $d\geq4$ and $t$ is close enough to $0$, then the $(d-1)$-dimensional volume of $H\cap[0,1]^d$ has strict local maxima when $H$ is orthogonal to a diagonal of $[0,1]^d$ and to any of its sub-diagonals of order at least $4$.
\end{thm}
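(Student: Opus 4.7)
The strategy I would follow starts from P\'olya's integral formula (Theorem~\ref{LEH.sec.1.thm.1}), which writes the function $V(a,t):=\mathrm{vol}_{d-1}(H\cap[0,1]^d)$---where $a\in S^{d-1}$ is the unit normal to $H$ and $t$ is its signed distance from the center of the hypercube---as a rapidly convergent oscillatory integral whose integrand is the product of $\prod_{i=1}^{d}\sin(a_is)/(a_is)$ with an explicit $t$-dependent oscillating factor. Because this integrand is jointly smooth in $(a,t)$ and rapidly decreasing in $s$, $V$ is $C^{\infty}$ on $S^{d-1}\times\mathbb{R}$, and strict local maximality at a fixed unit normal $a_0$ reduces to strict negative definiteness of the Hessian $Q_t$ of $V(\cdot,t)$ on the tangent space $T_{a_0}S^{d-1}$.

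I would then invoke symmetry. If $a_0$ is the unit normal to the diagonal or to a sub-diagonal, its stabilizer $G$ inside the signed permutation group of the coordinates acts on $T_{a_0}S^{d-1}$ with trivial fixed subspace. Since $V(\cdot,t)$ is $G$-invariant, the gradient at $a_0$ is $G$-invariant and must vanish, so $a_0$ is a critical point of $V(\cdot,t)$ for every $t$. The Hessian $Q_t$ is $G$-equivariant and therefore splits into scalar operators on the $G$-isotypic components of the tangent space: one scalar $\lambda(t)$ for the diagonal (the standard $(d-1)$-dimensional irreducible of $S_d$), and for an order-$n$ sub-diagonal with $n<d$ two scalars $\lambda_1(t)$ and $\lambda_2(t)$ governing, respectively, the perturbations inside the nonzero block of coordinates and the perturbations that turn on previously zero coordinates.

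The core of the proof is then to compute each of these scalars at $t=0$ by substituting the corresponding perturbation into the integrand, expanding $\sin(a_is)/(a_is)$ to second order, and signing the resulting explicit definite integral. The main obstacle is precisely this sign estimate: for the diagonal in every $d\geq 4$ and for sub-diagonals of order $n\geq 4$, every scalar must be proved strictly negative, which demands clean identities that make the sign transparent; by contrast at $n=2$ the sign analysis necessarily fails---in line with Ball's theorem that this direction is the global maximum at $t=0$---and $n=3$ sits on the delicate borderline that the statement excludes and that is presumably handled separately. Once strict negative definiteness of $Q_0$ is established in the target cases, continuity of $Q_t$ in $t$ transfers it to a neighborhood of $0$, yielding the strict local maxima claimed by the theorem.
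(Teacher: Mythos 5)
Your architecture coincides with the paper's proof: by symmetry the point with equal nonzero coordinates is a constrained critical point, the Hessian on the tangent space of the sphere splits into exactly two scalars --- one on the zero-sum directions inside the nonzero block and one on the directions that switch on a zero coordinate, which are precisely the quantities (\ref{LEH.sec.2.thm.1.eq.0.1}) and (\ref{LEH.sec.2.thm.1.eq.0.2}) of Theorem~\ref{LEH.sec.2.thm.1} --- both are expressed as improper integrals via the P{\'o}lya formula (Lemma~\ref{LEH.sec.2.lem.1}), their signs are determined at $t=0$, and continuity in $t$ concludes. So the skeleton is right.

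The genuine gap is that the sign of the two integrals at $t=0$ \emph{is} the theorem, and you give no argument for it. Both integrands are oscillatory and change sign infinitely often, so ``expanding to second order and signing the resulting explicit definite integral'' is not a routine step; you yourself flag it as the main obstacle and then leave it unresolved. The paper needs three dedicated propositions here: Proposition~\ref{LEH.sec.2.prop.1} shows the bracketed factor $\smash{(1-\cos 2s)/s^{2}-\sin(2s)/(2s)-1}$ is negative (which settles even $n$ pointwise), Proposition~\ref{LEH.sec.2.prop.2} establishes a monotonicity in $s$ that allows the contributions of consecutive intervals $[i\pi,(i+1)\pi]$ and $[(i+1)\pi,(i+2)\pi]$ to be paired for odd $n$, and Proposition~\ref{LEH.sec.2.prop.3} handles the first pair separately when $n=5$; an analogous pairing is needed for the second scalar when $n$ is odd. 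Without estimates of this kind the proof is incomplete. A secondary inaccuracy: $V$ is not $C^\infty$ on $\mathbb{S}^{d-1}$; differentiating under the integral costs powers of the decay, and the formula only delivers $C^{2}$ regularity at points with at least four nonzero coordinates (Corollary~\ref{LEH.sec.1.cor.2}). This regularity constraint --- not a borderline sign phenomenon at $n=3$, and not a failure of the sign analysis at $n=2$ --- is the reason the statement is restricted to sub-diagonals of order at least $4$.
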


In this statement, \emph{close enough to $0$} really means that $t$ belongs to an interval of the form $[0,\varepsilon[$ where $\varepsilon$ is a positive number (that depends on $d$). However, $\varepsilon$ is not explicited as the theorem partly follows from a topological argument. The second result extends the local maximality theorem from \cite{Konig2021}. Observe that each of the higher dimensional results mentioned above are stated for ranges of values of $t$ that go to $0$ when $d$ goes to infinity. In the following theorem, the corresponding range is asymptotic to $\sqrt{d}/\!\log{d}$.

\begin{thm}\label{LEH.sec.0.thm.0.1}
If $d\geq4$ and $t$ satisfies
$$
\frac{\sqrt{d}}{2}-\frac{1}{\sqrt{d}}\min\!\left\{\frac{d-1}{4},\frac{d^{1/(d-3)}}{d^{1/(d-3)}-1}\right\}\!<t<\frac{\sqrt{d}}{2}\mbox{,}
$$
then the $(d-1)$-dimensional volume of $H\cap[0,1]^d$ has a strict local maximum when $H$ is orthogonal to a diagonal of $[0,1]^d$.
\end{thm}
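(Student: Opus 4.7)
The plan is to fix $t$ and to study the $(d-1)$-dimensional volume $V_t(u)=\mathrm{vol}_{d-1}(H\cap[0,1]^d)$ as a function of the unit normal $u \in S^{d-1}$ of $H$. The theorem amounts to showing that $V_t$ has a strict local maximum at $u_0=(1/\sqrt{d})(1,\ldots,1)$ for every $t$ in the given range. The coordinate-permutation action of $S_d$ on $\mathbb{R}^d$ preserves $[0,1]^d$ and fixes $u_0$, so $V_t$ is $S_d$-invariant and $u_0$ is automatically a critical point of $V_t$ on $S^{d-1}$. The tangent space $T_{u_0}S^{d-1}$ is the standard irreducible (real-type) representation of $S_d$, so every $S_d$-invariant symmetric bilinear form on it is a real scalar multiple of the round metric. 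Consequently, the Hessian of $V_t$ at $u_0$ equals $\lambda(t)$ times the round metric for some scalar $\lambda(t)$, and the theorem reduces to proving $\lambda(t)<0$.

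To compute $\lambda(t)$, I would use the formula of \thmref{LEH.sec.1.thm.2} to express $V_t(u)$, for $u$ close to $u_0$, as a finite sum over those vertices of $[0,1]^d$ on a fixed side of $H$. When $t$ belongs to $(\sqrt{d}/2-k/\sqrt{d},\,\sqrt{d}/2-(k-1)/\sqrt{d})$ for some $k\geq 1$, these vertices are exactly those with at least $d-k+1$ coordinates equal to $1$, and each contributes a function that is smooth in a neighbourhood of $u_0$. Computing the second derivative of each summand along a tangent curve through $u_0$, and then grouping vertices by their $S_d$-orbits, yields a decomposition $\lambda(t)=\sum_{j=0}^{k-1}\lambda_j(t)$, where $\lambda_j(t)$ is the total Hessian contribution of the orbit of vertices with exactly $d-j$ coordinates equal to $1$.

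The term $\lambda_0(t)$ coming from the all-ones vertex is strictly negative and clearly dominates $\lambda(t)$ when $t$ is close to $\sqrt{d}/2$, which reproduces the regime already treated in \cite{Konig2021}. As $t$ decreases, further orbits $\lambda_j(t)$ with $j\geq 1$ enter the sum and are amplified by the binomial orbit sizes $\binom{d}{j}$. The main obstacle is the quantitative task of proving that $\lambda_0(t)$ continues to dominate the sum of the $\lambda_j(t)$ for $j\geq 1$ throughout the whole range claimed. The minimum of $(d-1)/4$ and $d^{1/(d-3)}/(d^{1/(d-3)}-1)$ in the hypothesis presumably reflects two complementary estimates of this dominance: a direct, layer-by-layer bound that is efficient in small dimensions (and produces the first quantity), and a geometric-series style majorant over all contributing layers that is efficient in large dimensions (and produces the second). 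Executing either estimate will require tracking both the sign and the magnitude of each $\lambda_j(t)$ with enough precision to cover the whole range, and this quantitative bookkeeping is where I expect the technical core of the proof to lie.
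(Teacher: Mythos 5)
Your reduction is sound and is essentially the paper's: the $S_d$-symmetry argument showing that the Hessian of $V_t$ on the sphere at $u_0$ is a scalar $\lambda(t)$ times the metric is a clean repackaging of the paper's Lagrange-multiplier computation (Theorem \ref{LEH.sec.2.thm.1}), where the restriction of the Hessian of the Lagrangian to $\{x:\sum x_i=0\}$ collapses to the single quantity (\ref{LEH.sec.2.thm.1.eq.0.1}) times $\sum x_i^2$; and your layer-by-layer decomposition of $\lambda(t)$ via Theorem \ref{LEH.sec.1.thm.2} is exactly Lemma \ref{LEH.sec.3.lem.1}, which gives $\lambda(t)$ as a positive multiple of $\sum_{i=0}^{\lfloor z\rfloor}(-1)^i\binom{d}{i}(z-i)^{d-3}p_{i,d}(z)$ with $z=d/2-t\sqrt{d}$. (One small point you gloss over: when $z$ passes through an integer a vertex layer sits on $H$ itself, so you cannot simply restrict to open subintervals of $t$; the paper handles this by noting that vertices with $a\cdot v=b$ contribute vanishing first and second derivatives because $d\geq4$.)

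The genuine gap is that you stop precisely where the proof begins: you do not establish that $\lambda(t)<0$ on the stated range, and you explicitly defer the "quantitative bookkeeping" that constitutes the entire technical content of the theorem. Moreover, your guess about the provenance of the minimum is not how the argument actually works: it is not two complementary estimates, one for small and one for large $d$, with the dominant layer $\lambda_0$ beating the rest. Instead the paper proves a pairwise-domination criterion for alternating sums (Proposition \ref{LEH.sec.3.prop.1}): if the weights $f_i(z)$ are positive with $f_i/f_{i+1}$ monotone in $i$, then consecutive terms of $\sum(-1)^i\binom{d}{i}(z-i)^{d-3}f_i(z)$ dominate in pairs provided $z$ is below an explicit threshold, which is where $d^{1/(d-3)}/(d^{1/(d-3)}-1)$ comes from. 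Since $p_{i,d}(z)$ is not of one sign, Lemma \ref{LEH.sec.3.lem.2} first splits it as $(-f_i(z)+g_i(z))/((d-1)(d-2))$ with $f_i$ independent of $i$ and $g_0=0$, and the condition $z<(d-1)/4$ is what makes $f_i$ (and the relevant coefficient in $g_i$) positive so that the proposition applies to each piece. Both constraints are therefore needed simultaneously for a single argument, and verifying the ratio monotonicity and the threshold comparison $q\geq 1/d$ for the $g$-piece is a nontrivial computation that your proposal does not attempt. Without this (or some substitute sign analysis of the alternating sum), the claim is unproven.
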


While Theorem \ref{LEH.sec.0.thm.0.2} is stated indifferently for the diagonals of the hypercube and its lower order sub-diagonals, Theorem \ref{LEH.sec.0.thm.0.1} does not give a hint of what happens at sub-diagonals. According to the third result of the article, they behave in a very different way than the diagonals.

\begin{thm}\label{LEH.sec.0.thm.0.3}
If $4\leq{n}<d$ and $t$ satisfies
\begin{equation}\label{LEH.sec.0.thm.0.3.eq.1}
\frac{\sqrt{n}}{2}-\frac{1}{\sqrt{n}}\min\!\left\{\frac{n-1}{4},\frac{n^{1/(n-3)}}{n^{1/(n-3)}-1}\right\}\!<t<\frac{\sqrt{n}}{2}\mbox{,}
\end{equation}
then the $(d-1)$-dimensional volume of $H\cap[0,1]^d$ does not have a local extremum when $H$ is orthogonal to an order $n$ sub-diagonal of $[0,1]^d$.
\end{thm}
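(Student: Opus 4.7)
The plan is to show that, on the sphere of unit normals to hyperplanes at distance $t$ from the center of $[0,1]^d$, the $(d-1)$-volume function $A$ has at the order-$n$ sub-diagonal direction $v_0=(1,\ldots,1,0,\ldots,0)/\sqrt{n}$ both a direction of strict local decrease and a direction of strict local increase, and is therefore neither a local maximum nor a local minimum. The stabilizer of $v_0$ in the hyperoctahedral group---permutations of the first $n$ coordinates together with permutations and sign-flips of the last $d-n$---decomposes the tangent space $v_0^{\perp}$ orthogonally as $W_1\oplus W_2$, where $W_1$ is the subspace of vectors supported on the first $n$ coordinates and orthogonal to $(1,\ldots,1,0,\ldots,0)$, and $W_2$ is the subspace of vectors supported on the last $d-n$ coordinates.

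The decrease comes from $W_1$. For $u\in W_1$ and $v_\theta=\cos\theta\,v_0+\sin\theta\,u$, the perturbed hyperplane retains the product form $H'\times\mathbb{R}^{d-n}$ with $H'\subset\mathbb{R}^n$ at distance $t$ from the center of $[0,1]^n$, so $A(\theta)=\operatorname{vol}_{n-1}(H'\cap[0,1]^n)$. Since~\eqref{LEH.sec.0.thm.0.3.eq.1} is precisely the hypothesis of Theorem~\ref{LEH.sec.0.thm.0.1} read in dimension $n$, that theorem gives strict local maximality of the $(n-1)$-volume at the diagonal of $[0,1]^n$, and every nonzero $u\in W_1$ yields a strict local decrease.

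The increase I would extract from $W_2$ via the tilt $v_\theta=\cos\theta\,v_0+\sin\theta\,e_{n+1}$. Parameterizing the slice by $(x_2,\ldots,x_d)$, solving for $x_1$, and integrating $x_{n+1}$ out through a linear change of variable yields the closed form $A(\theta)=(\sin\theta)^{-1}\int_{t_1(\theta)}^{t_2(\theta)}V_n(s)\,ds$ with $t_{1,2}(\theta)=(t\mp\sin\theta/2)/\cos\theta$, where $V_n(s)$ denotes the $(n-1)$-volume of the central diagonal section of $[0,1]^n$ at distance $s$. Taylor expansion at $\theta=0$ gives
\[
A''(0)=V_n(t)+t\,V_n'(t)+\tfrac{1}{12}V_n''(t).
\]
Throughout the range~\eqref{LEH.sec.0.thm.0.3.eq.1} only the vertex $(1,\ldots,1)$ is separated by the hyperplane in $\mathbb{R}^n$, so $V_n(t)=(n/2-t\sqrt{n})^{n-1}\sqrt{n}/(n-1)!$, and substitution reduces the sign of $A''(0)$ to that of an explicit quadratic $R(\tau)$ in $\tau=\sqrt{n}/2-t$. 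A direct discriminant computation gives $\operatorname{disc}(R)$ proportional to $n(n-1)(5-n)$, which is non-positive for $n\geq 5$, so $R(\tau)>0$ throughout the range and $A''(0)>0$, yielding strict increase in $W_2$ and concluding the argument for $n\geq 5$.

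The main obstacle is the case $n=4$, where $\operatorname{disc}(R)>0$ so the pure tilt $e_5$ does not, by itself, deliver a strict increase on all of the range. Handling this low-dimensional case will likely require leveraging the precise form of the lower bound $\sqrt{n}/2-(n-1)/(4\sqrt{n})$ and refining the construction---for instance by mixing contributions from $W_1$ and $W_2$, by tracking higher-order terms in $\theta$, or by invoking the general characterization of local extremality at diagonals and sub-diagonals alluded to in the abstract---to recover a direction of strict increase throughout the stated interval.
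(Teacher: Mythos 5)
Your overall strategy coincides with the paper's: the paper applies the second-order \emph{necessary} conditions of the Lagrange multiplier theorem (Theorem~\ref{LEH.sec.3.5.thm.1}) to a Hessian that block-diagonalizes over your $W_1\oplus W_2$, showing the $W_1$-block quantity is negative (Lemmas~\ref{LEH.sec.3.lem.1} and~\ref{LEH.sec.3.lem.2}) and the $W_2$-block quantity is positive (Lemma~\ref{LEH.sec.3.5.lem.1} with Proposition~\ref{LEH.sec.3.prop.1}). Your $W_1$ half, via the product structure and Theorem~\ref{LEH.sec.0.thm.0.1} read in dimension $n$, is a clean shortcut and is fine. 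The $W_2$ half has a concrete gap beyond the acknowledged $n=4$ problem: the claim that only one vertex of $[0,1]^n$ is cut off throughout the range \eqref{LEH.sec.0.thm.0.3.eq.1} is false for every $n\geq6$, since the upper bound $\min\{(n-1)/4,\,n^{1/(n-3)}/(n^{1/(n-3)}-1)\}$ on $z=n/2-t\sqrt{n}$ then exceeds $1$ (it equals $5/4$ for $n=6$ and grows like $n/\!\log n$). Your closed form for $V_n$, hence the quadratic $R$ and its discriminant, therefore cover only $z<1$, and the subinterval $1\leq z<\min\{\cdots\}$ is left untreated; the paper handles the full range by applying the alternating-sum estimate of Proposition~\ref{LEH.sec.3.prop.1} to $\sum(-1)^i{n\choose i}(z-i)^{n-3}$, and you would need an analogous device for your weighted sum.

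There is also a substantive discrepancy you should be aware of. Your formula $A''(0)=V_n(t)+tV_n'(t)+\frac{1}{12}V_n''(t)$ is the correct second derivative along the great circle (I re-derived it from your integral representation, and independently from $A''(0)=\partial^2_{a_{n+1}}(V/\|a\|)-\sqrt{n}\,\partial_{a_1}(V/\|a\|)$, using that the radial derivative of $V/\|a\|$ at $a=v_0$ equals $-V_n(t)-tV_n'(t)$). The paper's $W_2$ test quantity is instead $\partial^2_{a_d}(V/\|a\|)$, which by Lemma~\ref{LEH.sec.3.5.lem.1} equals $\frac{1}{12}V_n''(t)$ alone; the difference $V_n+tV_n'=-\sqrt{n}\,\partial_{a_1}(V/\|a\|)=2\lambda$ is exactly the diagonal contribution of $2\lambda$ to the Lagrangian Hessian $\nabla^2(V/\|a\|)+2\lambda I$, which the displayed formula for $\partial^2 L_\lambda/\partial a_j^2$ with $n<j\leq d$ in the proofs of Theorems~\ref{LEH.sec.2.thm.1} and~\ref{LEH.sec.3.5.thm.1} omits. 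This is not cosmetic: for $n=4$ and $0<z<3/4$ one gets $V_4+tV_4'+\frac{1}{12}V_4''=\frac{2z}{3}(2z-1)(z-1)$, which is negative on $1/2<z<3/4$, whereas the paper's $\frac{1}{12}V_4''=\frac{2z}{3}$ is positive there. Since on that subinterval the $W_1$ block is also negative and the mixed second derivatives vanish, your (correct) computation makes the reduced Hessian negative definite, i.e.\ it indicates a strict local \emph{maximum} at the order-$4$ sub-diagonal for $1/2<z<3/4$ rather than a non-extremum. So the obstacle you flag at $n=4$ does not look like something to be refined away by mixing directions or higher-order terms; it appears to be a genuine obstruction, and the point where your computation diverges from the paper's is precisely where the issue lies.
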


In fact, Theorems \ref{LEH.sec.0.thm.0.2}, \ref{LEH.sec.0.thm.0.1}, and \ref{LEH.sec.0.thm.0.3} are obtained as consequences of more general results, valid for all possible values of $t$. While these results can be stated in different ways (largely because of the different possible expressions for the volume of $H\cap[0,1]^d$), two statements are particularly noteworthy. In these statements, $p_{i,d}$ is the quadratic function of $z$ defined as
\begin{equation}\label{LEH.sec.0.thm.1.eq.-1}
p_{i,d}(z)=\frac{i(d-i)}{d-1}-\!\left(\frac{d}{2}-i\right)\!\frac{z-i}{d-2}+\frac{2d(z-i)^2}{(d-1)(d-2)}\mbox{.}
\end{equation}

The local extremality for the $(d-1)$-dimensional volume of $H\cap[0,1]^d$ when $H$ is orthogonal to the diagonals of $[0,1]^d$ can be obtained as follows.

\begin{thm}\label{LEH.sec.0.thm.1}
Assume that $d\geq4$. If
\begin{equation}\label{LEH.sec.0.thm.1.eq.0}
\sum_{i=0}^{\lfloor{z}\rfloor}(-1)^i{d\choose{i}}(z-i)^{d-3}p_{i,d}(z)
\end{equation}
is negative, where
$$
z=\frac{d}{2}-t\sqrt{d}\mbox{,}
$$
then the $(d-1)$-dimensional volume of $H\cap[0,1]^d$ has a strict local maximum when $H$ is orthogonal to a diagonal of the hypercube $[0,1]^d$. If, however, (\ref{LEH.sec.0.thm.1.eq.0}) is positive, then the $(d-1)$-dimensional volume of $H\cap[0,1]^d$ has a strict local minimum when $H$ is orthogonal to a diagonal of $[0,1]^d$.
\end{thm}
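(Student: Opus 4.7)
The plan is to read off the local extremality of $v(\cdot,t)$ at the diagonal direction $a^{\ast}=(1,\ldots,1)/\sqrt{d}\in S^{d-1}$ from a single scalar $\lambda$, the common eigenvalue of the Hessian on the sphere at $a^{\ast}$, and to identify the sign of $\lambda$ with that of the sum (\ref{LEH.sec.0.thm.1.eq.0}) by a direct second-order computation.

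I would first exploit the symmetric group action. Permutations of coordinates act on $S^{d-1}$, fix $a^{\ast}$, and preserve $v(\cdot,t)$; the tangent space $T_{a^{\ast}}S^{d-1}=\{x\in\mathbb{R}^d:\sum_j x_j=0\}$ carries the standard $(d-1)$-dimensional real irreducible representation of $S_d$, which is absolutely irreducible. Consequently the gradient of $v(\cdot,t)|_{S^{d-1}}$ at $a^{\ast}$ vanishes (the only $S_d$-fixed tangent vector is $0$), and the Hessian is $S_d$-invariant, hence by Schur's lemma equals $\lambda\cdot I_{d-1}$ for a single scalar $\lambda$. Whenever $\lambda\neq 0$, all its eigenvalues share this sign, so $\lambda<0$ yields a strict local maximum and $\lambda>0$ a strict local minimum.

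Next I would compute $\lambda$ by differentiating $v(a(\theta),t)$ twice at $\theta=0$ along one convenient tangent curve, say $a(\theta)=\cos\theta\cdot a^{\ast}+\sin\theta\cdot w$ with $w=(e_1-e_2)/\sqrt{2}$. Using the vertex-sum formula (\thmref{LEH.sec.1.thm.2}), $v(a,t)$ is a signed sum over those $\epsilon\in\{0,1\}^d$ that lie on one side of $H$ whose summand is proportional to $(\epsilon\cdot a-s(a,t))^{d-1}/\!\prod_j|a_j|$, with $s(a,t)$ the offset positioning $H$ at distance $t$ from the centre of $[0,1]^d$. Provided $z=d/2-t\sqrt{d}$ is not an integer, the active vertex set is locally constant near $a^{\ast}$ and, with $i=d-|\epsilon|$, consists of the $\binom{d}{i}$ vertices with $|\epsilon|=d-i$ for $i\in\{0,\ldots,\lfloor z\rfloor\}$; at $a^{\ast}$, $\epsilon\cdot a^{\ast}-s(a^{\ast},t)$ evaluates to $(z-i)/\sqrt{d}$, which is where the factor $(z-i)^{d-3}$ in (\ref{LEH.sec.0.thm.1.eq.0}) originates after two differentiations.

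Finally I would split each level according to whether $\epsilon_1=\epsilon_2$, expand each summand to second order in $\theta$, and collect the three sources of quadratic contribution: the exponent $(d-1)$ in the numerator (which brings out the factor $(d-1)(d-2)$ in the denominator of $p_{i,d}$), the normalisation $\prod_j|a_j|$, and the $\theta$-dependence of $s(a,t)$. The expected collapse is that at each level $i$ the counts of the two vertex types combine with these three contributions into exactly $\binom{d}{i}(z-i)^{d-3}p_{i,d}(z)$, so that $\lambda$ equals (\ref{LEH.sec.0.thm.1.eq.0}) up to a universal nonzero prefactor whose sign then completes the proof. The main obstacle is precisely this algebraic identification; the integer values of $z$, where the active vertex set jumps, would be handled separately or by a continuity argument.
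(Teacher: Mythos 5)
Your framework is in substance the paper's own: both arguments reduce the theorem to the sign of a single scalar second-order quantity at the symmetric critical point $a^*=(1,\dots,1)/\sqrt{d}$ and then identify that scalar with (\ref{LEH.sec.0.thm.1.eq.0}) up to a positive factor. Your Schur's-lemma observation is an elegant repackaging of what the paper verifies by hand in Theorem~\ref{LEH.sec.2.thm.1}: there the Hessian of the Lagrangian, restricted to $\{x:\sum_{i}x_i=0\}$, is computed explicitly to be a multiple of $\sum_i x_i^2$ plus a rank-one term that vanishes on that hyperplane, and your eigenvalue $\lambda$ coincides with the paper's quantity (\ref{LEH.sec.2.thm.1.eq.0.1}), the term $-\sqrt{d}\,\partial_{a_1}(V/\|a\|)$ being exactly the $\nabla V\cdot a''(0)$ contribution of your spherical curve. (Note that the absolute irreducibility you invoke is special to the diagonal, where all of $S_d$ fixes $a^*$; at sub-diagonals the tangent space splits into several invariant pieces, which is why Theorem~\ref{LEH.sec.2.thm.1} needs a second condition when $n<d$.) The one substantive step you do not carry out is the algebraic identification itself, which is the actual content of the paper's Lemma~\ref{LEH.sec.3.lem.1}: it requires the level-by-level counts of vertices according to their first two coordinates ($\binom{n-2}{i}$, $2\binom{n-2}{i-1}$, $\binom{n-2}{i-2}$) together with the explicit first and second derivatives of $(b-a\mathord{\cdot}v)^{d-1}/\pi(a)$, and it does collapse to $\binom{d}{i}(z-i)^{d-3}p_{i,d}(z)$ with positive prefactor $\sqrt{d}/(d-3)!$, exactly as you predict --- but as written your argument is a plan for that computation rather than the computation. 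Two smaller points: you should invoke the $C^2$ regularity of $V$ near $a^*$ (Corollary~\ref{LEH.sec.1.cor.2}, available since all coordinates of $a^*$ are positive and $d\geq4$); and integer values of $z$ need no separate treatment, since vertices with $a\mathord{\cdot}v=b$ contribute zero to the first and second derivatives when $d\geq4$ (Lemmas~\ref{LEH.sec.1.lem.1} and~\ref{LEH.sec.1.lem.2}), so the discrete formula, and with it the theorem, holds verbatim there.
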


Because of its piecewise-polynomial nature, (\ref{LEH.sec.0.thm.1.eq.0}) can only possibly vanish at finitely-many values of $z$. Therefore, when $t$ ranges within the interval $[0,\sqrt{d}/2[$, the $(d-1)$\nobreakdash-dimensional volume of $H\cap[0,1]^d$ is almost always strictly locally extremal when $H$ is orthogonal to a diagonal of $[0,1]^d$.

In practice, for any fixed and reasonably low dimension $d$, Theorem \ref{LEH.sec.0.thm.1} allows to completely determine how the local extremality of that volume varies when $H$ is orthogonal to a diagonal of $[0,1]^d$ over the whole interval $[0,\sqrt{d}/2[$ for $t$. It suffices to estimate the roots of $\lceil{d/2}\rceil$ polynomials, each of degree $d-1$. This will be illustrated at the end of the article. A theorem similar to Theorem \ref{LEH.sec.0.thm.1} is established for lower order sub-diagonals.

\begin{thm}\label{LEH.sec.0.thm.2}
Assume that $4\leq{n}<d$. If
\begin{equation}\label{LEH.sec.0.thm.2.eq.0}
\sum_{i=0}^{\lfloor{z}\rfloor}(-1)^i{n\choose{i}}(z-i)^{n-3}p_{i,n}(z)
\end{equation}
and
\begin{equation}\label{LEH.sec.0.thm.2.eq.1}
\sum_{i=0}^{\lfloor{z}\rfloor}(-1)^i{n\choose{i}}(z-i)^{n-3}
\end{equation}
are both negative, where
$$
z=\frac{n}{2}-t\sqrt{n}\mbox{,}
$$
then the $(d-1)$-dimensional volume of $H\cap[0,1]^d$ has a strict local maximum when $H$ is orthogonal to an order $n$ sub-diagonal of $[0,1]^d$. If on the contrary, (\ref{LEH.sec.0.thm.2.eq.0}) and (\ref{LEH.sec.0.thm.2.eq.1}) are both positive, then that volume has a strict local minimum when $H$ is orthogonal to an order $n$ sub-diagonal of $[0,1]^d$.
\end{thm}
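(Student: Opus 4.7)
I plan to prove this by extending the Hessian analysis underlying Theorem~\ref{LEH.sec.0.thm.1}, now exploiting the reduced symmetry induced by the sub-diagonal. Let $v_0 = (1,\ldots,1,0,\ldots,0)/\sqrt{n} \in S^{d-1}$ be the unit normal to an order $n$ sub-diagonal of $[0,1]^d$, and write $V(v)$ for the $(d-1)$-dimensional volume of the hyperplane $H$ at distance $t$ from the centre with unit normal $v$. The group $G = S_n \times B_{d-n}$, with $S_n$ permuting the first $n$ coordinates and $B_{d-n}$ the hyperoctahedral group acting by signed permutations on the last $d-n$ coordinates, preserves $V$ (permutations are direct cube symmetries, and each transverse sign flip is realised by combining the hypercube reflection $x_j \mapsto 1-x_j$ with a sign change of the normal) and fixes $v_0$. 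Since the tangent representation of $G$ at $v_0$ contains no trivial component, $v_0$ is a critical point of $V$ on the sphere.

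The tangent space $T_{v_0}S^{d-1}$ decomposes as a direct sum $T_s \oplus T_u$ of two inequivalent irreducible $G$-subrepresentations: the \emph{in-diagonal} subspace $T_s \cong \{s \in \mathbb{R}^n : \sum_i s_i = 0\}$, carrying the standard $S_n$-representation, and the \emph{transverse} subspace $T_u \cong \mathbb{R}^{d-n}$, carrying the natural representation of $B_{d-n}$. By Schur's lemma, the Hessian of $V$ at $v_0$ acts as a scalar $\lambda_s$ on $T_s$ and as a scalar $\lambda_u$ on $T_u$, with no cross terms; a strict local maximum (respectively minimum) of $V$ at $v_0$ is therefore equivalent to $\lambda_s$ and $\lambda_u$ being both negative (respectively both positive).

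For the in-diagonal scalar, when the transverse components of $v$ vanish, the hyperplane $H$ factors as $H^{(n)} \times [0,1]^{d-n}$ where $H^{(n)} \subset \mathbb{R}^n$ is at distance $t$ from the centre of $[0,1]^n$. Hence $V(v)$ equals the $(n-1)$-dimensional volume of $H^{(n)} \cap [0,1]^n$, and perturbing $v$ within $T_s$ reduces the problem to the diagonal setting of Theorem~\ref{LEH.sec.0.thm.1} applied in dimension $n$. This identifies $\lambda_s$, up to a strictly positive factor, with the sum~(\ref{LEH.sec.0.thm.2.eq.0}).

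For the transverse scalar, I would expand $V(v_\epsilon)$ to second order along the geodesic $v_\epsilon = \cos(\epsilon) v_0 + \sin(\epsilon) e_{n+1}$ using the vertex-sum formula of Theorem~\ref{LEH.sec.1.thm.2}. The vertices of $[0,1]^d$ lying on the relevant side of $H$ split into classes indexed by $i \in \{0,\ldots,\lfloor z \rfloor\}$ according to the number of coordinates equal to $1$ among the first $n$, and each class's contribution admits a clean expansion in $\epsilon$ once the tilt in the $(n+1)$-st coordinate is taken into account. Extracting the coefficient of $\epsilon^2$ and exploiting the $S_n$- and $B_{d-n}$-symmetries to collapse terms, the result should reduce to a strictly positive multiple of the sum~(\ref{LEH.sec.0.thm.2.eq.1}). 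The main technical obstacle is executing this second-order vertex-sum expansion and verifying the cancellations that isolate precisely (\ref{LEH.sec.0.thm.2.eq.1}); a secondary concern is that (\ref{LEH.sec.0.thm.2.eq.0}) and (\ref{LEH.sec.0.thm.2.eq.1}) are piecewise polynomial in $z$, so the analysis must be carried out on each open interval $(k, k+1)$ separately to handle the transition points.
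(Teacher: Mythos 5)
Your overall strategy coincides with the paper's: both reduce the statement to the sign of a block-diagonal second-order form at the critical point $v_0$, with the in-diagonal block governed by (\ref{LEH.sec.0.thm.2.eq.0}) and the transverse block by (\ref{LEH.sec.0.thm.2.eq.1}). The paper reaches the block structure through the second-order sufficiency conditions of the Lagrange multipliers theorem (Theorem \ref{LEH.sec.2.thm.1}) and an explicit verification that the mixed partials $\partial^2(V/\|a\|)/\partial a_j\partial a_k$ vanish for $j\le n<k$ (Lemma \ref{LEH.sec.1.lem.2}), rather than through Schur's lemma, but that is a difference of packaging. One small correction: negative definiteness of the Hessian is sufficient for a strict local maximum, not equivalent to it; only the sufficiency direction is needed here.

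The genuine gap is the transverse eigenvalue. You defer as a ``technical obstacle'' the verification that the second-order coefficient in the transverse direction is a positive multiple of (\ref{LEH.sec.0.thm.2.eq.1}), but this is precisely the hardest step of the paper's proof. The difficulty is that the vertex-sum formula of Theorem \ref{LEH.sec.1.thm.2} carries $\pi(a)$ in the denominator: once $v_0$ is tilted into the $(n+1)$-st coordinate by $\epsilon$, every vertex of $[0,1]^n$ splits into a pair of vertices of $[0,1]^{n+1}$ whose individual contributions blow up like $\epsilon^{-1}$, and the combined limit is an indeterminate form of order three. The paper resolves this in Lemma \ref{LEH.sec.1.lem.3} by repeated application of l'H\^opital's rule, producing the factor $1/12$ and the drop of the exponent from $n-1$ to $n-3$; equivalently, in the integral picture of Corollary \ref{LEH.sec.1.cor.2}, it comes from the value $-1/3$ of the second derivative of $\sin(x)/x$ at $0$. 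Without this computation your argument does not identify $\lambda_u$ with (\ref{LEH.sec.0.thm.2.eq.1}). Two further points need attention. First, your Schur's-lemma argument and the Taylor expansion both presuppose that $V$ is twice continuously differentiable at $v_0$, a point with $d-n$ zero coordinates on the boundary of the orthant; this is not automatic from the vertex-sum formula (which requires all coordinates non-zero) and is exactly what Corollary \ref{LEH.sec.1.cor.2} supplies from the integral representation --- it is also where the hypothesis $n\ge4$ enters. Second, for the in-diagonal scalar you cannot literally quote the statement of Theorem \ref{LEH.sec.0.thm.1}, since a strict local maximum does not imply a negative Hessian; you need the identity inside its proof, namely that $\partial^2_{a_1}(V/\|a\|)-\sqrt{n}\,\partial_{a_1}(V/\|a\|)-\partial^2_{a_1a_2}(V/\|a\|)$ is a positive multiple of (\ref{LEH.sec.0.thm.2.eq.0}), which the paper isolates as Lemma \ref{LEH.sec.3.lem.1}.
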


Another theorem will be proven as well, providing a condition (roughly, that (\ref{LEH.sec.0.thm.2.eq.0}) and (\ref{LEH.sec.0.thm.2.eq.1}) have opposite signs) under which the considered volume is not strictly locally extremal at the sub-diagonals of the hypercube. Together with Theorem \ref{LEH.sec.0.thm.2}, it allows determine in practice how the local extremality of the $(d-1)$\nobreakdash-dimensional volume of $H$ varies when $H$ is orthogonal to a sub-diagonal of $[0,1]^d$ of fixed, reasonably low order $n$ when $t$ ranges within the whole interval $[0,\sqrt{n}/2[$. As for the diagonals of the hypercube, this will be done at the end of the article relying, in part on symbolic computations.

In Section \ref{LEH.sec.1}, the above mentioned formulas for the $(d-1)$-dimensional volume of $H\cap[0,1]^d$ are recalled, and their partial derivatives with respect to the orientation of $H$ expressed in the two possible forms of an improper integral and a discrete sum over the vertices of the hypercube. The regularity properties of this volume as a function of the orientation of $H$ will be established using the improper integral form, which is the reason why most results in this article are given for dimensions at least $4$ or sub-diagonals of order at least $4$ (as is apparent from the statement of the above theorems). Theorem~\ref{LEH.sec.0.thm.0.2} is proven in Section~\ref{LEH.sec.2} using the second order sufficiency conditions of the constrained Lagrange multipliers theorem and the improper integral form of the partial derivatives of the volume of $H\cap[0,1]^d$. Theorems \ref{LEH.sec.0.thm.0.1} and \ref{LEH.sec.0.thm.1} are both established in Section \ref{LEH.sec.3} using the same Lagrange multipliers strategy, but with the discrete sum form of the partial derivatives. Section \ref{LEH.sec.3.5} is devoted to studying local extremality at the sub-diagonals of the hypercube when $t$ is large or equivalently, when $H$ is far away from the center of the hypercube. Theorems \ref{LEH.sec.0.thm.0.3} and \ref{LEH.sec.0.thm.2} are proven in that section using the second order necessary conditions of the constrained Lagrange multipliers theorem. Finally, the local extremality of the $(d-1)$\nobreakdash-dimensional volume of $H\cap[0,1]^d$ is studied in Section \ref{LEH.sec.4} when $t$ ranges within the whole interval $[0,\sqrt{d}/2[$ at the diagonals of low-dimensional hypercubes and at low order sub-diagonals of hypercubes of arbitrary dimension. A simple, consistent behavior is observed in these cases that is likely to carry over to higher dimensions and sub-diagonal orders.

\section{Partial derivatives of section volumes}\label{LEH.sec.1}

From now on, $a$ denotes a non-zero vector from $\mathbb{R}^d$ and $b$ is a real number. Denote by $H$ the hyperplane of $\mathbb{R}^d$ made up of the points $x$ such that $a\mathord{\cdot}x=b$. The following well-known theorem (see, for instance \cite{Ball1986,Berger2010,FrankRiede2012,KonigKoldobsky2011,Polya1913,Zong2006}) provides an expression for the $(d-1)$-dimensional volume of the intersection $H\cap[0,1]^d$ that takes the form of an improper integral. 
In the sequel, $\sigma(x)$ denotes the sum of the coordinates of a vector $x$ of $\mathbb{R}^d$.

\begin{thm}\label{LEH.sec.1.thm.1}
Assume that $a$ has at least two non-zero coordinates. In that case, the $(d-1)$\nobreakdash-dimensional volume of $H\cap[0,1]^d$ is
$$
\frac{\|a\|}{\pi}\int_{-\infty}^{+\infty}\!\left(\prod_{i=1}^d\frac{\sin(a_iu)}{a_iu}\right)\!\cos\!\left(2\!\left[b-\frac{\sigma(a)}{2}\right]\!u\right)\!du\mbox{.}
$$
\end{thm}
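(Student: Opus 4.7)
The plan is to obtain the formula by Fourier inversion applied to the pushforward of Lebesgue measure on $[0,1]^d$ under the linear map $x\mapsto a\mathord{\cdot}x$. Let $V(s)$ denote the $(d-1)$-dimensional volume of the section $\{x\in[0,1]^d:a\mathord{\cdot}x=s\}$. By the coarea formula (or, equivalently, by an orthogonal change of coordinates that sends the direction $a/\|a\|$ to a coordinate axis followed by Fubini), for every bounded continuous $\phi:\mathbb{R}\to\mathbb{R}$,
$$
\|a\|\!\int_{[0,1]^d}\phi(a\mathord{\cdot}x)\,dx=\int_{-\infty}^{+\infty}\phi(s)V(s)\,ds\mbox{.}
$$

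Next I would compute the Fourier transform of $V$ by taking $\phi(s)=e^{ius}$ in this identity: since the integrand factors over the coordinates of the cube,
$$
\int_{-\infty}^{+\infty}\!e^{ius}V(s)\,ds=\|a\|\prod_{j=1}^{d}\int_{0}^{1}\!e^{iua_jx_j}\,dx_j=\|a\|\,e^{iu\sigma(a)/2}\prod_{j=1}^{d}\frac{\sin(ua_j/2)}{ua_j/2}\mbox{.}
$$
The hypothesis that $a$ has at least two non-zero coordinates is crucial here: it makes the product $\prod_j\sin(ua_j/2)/(ua_j/2)$ an $O(1/u^2)$ function at infinity, hence integrable. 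One verifies separately that $V$ is continuous on $\mathbb{R}$ under the same hypothesis (each section is a continuous family of polytopes whose $(d-1)$-volume depends continuously on $s$), so the classical Fourier inversion theorem applies pointwise at $b$:
$$
V(b)=\frac{\|a\|}{2\pi}\!\int_{-\infty}^{+\infty}\!e^{-iu(b-\sigma(a)/2)}\prod_{j=1}^{d}\frac{\sin(ua_j/2)}{ua_j/2}\,du\mbox{.}
$$

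Finally, since the product $\prod_j\sin(ua_j/2)/(ua_j/2)$ is real and even in $u$, the imaginary part of the integrand is odd and integrates to zero, so $e^{-iu(b-\sigma(a)/2)}$ may be replaced by $\cos\!\left(u[b-\sigma(a)/2]\right)$. The substitution $u\mapsto2u$ then rescales the sine factors into $\sin(a_j u)/(a_ju)$, turns the cosine into $\cos(2[b-\sigma(a)/2]u)$, and converts the prefactor $\|a\|/(2\pi)$ into $\|a\|/\pi$, yielding the stated expression.

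The main technical obstacle is the justification of Fourier inversion at the particular point $b$; it reduces to checking that $\widehat{V}$ is integrable (which follows from the two-non-zero-coordinate hypothesis) and that $V$ is continuous at $b$, both of which are soft regularity facts. Beyond this, the argument is a direct calculation: everything else is elementary manipulation of the product formula coming from the tensor-product structure of the cube.
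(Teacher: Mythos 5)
Your argument is correct. Note, however, that the paper offers no proof of this statement at all: it is quoted as a classical result going back to P\'olya, with the proof deferred to the cited references. Your Fourier-inversion derivation (pushforward of Lebesgue measure under $x\mapsto a\mathord{\cdot}x$, factorisation of the characteristic function over the coordinates, inversion justified by the $O(1/u^2)$ decay coming from the two-non-zero-coordinate hypothesis together with the continuity of the section-volume function) is precisely the standard proof found in those references, so there is nothing to compare it against within the paper itself.
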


Note that terms of the form $\sin(x)/x$ appear in this expression that are indeterminate when $x$ is equal to $0$. However, under the convention that
$$
\frac{\sin(0)}{0}=1\mbox{,}
$$
which is adopted in the sequel, $\sin(x)/x$ becomes a twice continuously differentiable function of $x$ on $\mathbb{R}$. It will be useful to keep in mind that
$$
\frac{d}{dx}\frac{\sin(x)}{x}
$$
vanishes when $x=0$ and that
$$
\frac{d^2}{dx^2}\frac{\sin(x)}{x}
$$
is equal to $-1/3$ when $x=0$.

When all the coordinates of $a$ are non-zero, the $(d-1)$\nobreakdash-dimensional volume of $H\cap[0,1]^d$ can alternatively be expressed as a sum over a subset of the vertices of the hypercube. This alternative expression, stated in the following theorem, is proven in \cite{Pournin2021} as a straightforward consequence of a result from \cite{BarrowSmith1979}. From now on, $\pi(x)$ denotes the product of the non-zero coordinates of a vector $x$ of $\mathbb{R}^d$. This notation differs from the one used in \cite{Pournin2021}, where $\pi(x)$ denotes the product of all the coordinates of $x$ and not only the non-zero ones. This distinction does not play a role in the following statement, but it will later on.
\begin{thm}\label{LEH.sec.1.thm.2}
If $d$ is at least $2$ and all the coordinates of $a$ are non-zero, then the $(d-1)$-dimensional volume of $H\cap[0,1]^d$ is
\begin{equation}\label{HS.sec.1.thm.2.eq.0}
\sum\frac{(-1)^{\sigma(v)}\|a\|(b-a\mathord{\cdot}v)^{d-1}}{(d-1)!\pi(a)}\mbox{,}
\end{equation}
where the sum is over the vertices $v$ of $[0,1]^d$ such that $a\mathord{\cdot}v\leq{b}$.
\end{thm}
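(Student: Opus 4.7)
The plan is to derive (\ref{HS.sec.1.thm.2.eq.0}) by differentiating with respect to $b$ a Barrow--Smith style formula for the cumulative volume of the portion of the hypercube on one side of $H$. Set
$$
V(b)=\mathrm{vol}\bigl(\bigl\{x\in[0,1]^d:a\mathord{\cdot}x\leq b\bigr\}\bigr)\mbox{.}
$$
Since the Euclidean gradient of the linear map $x\mapsto a\mathord{\cdot}x$ has constant norm $\|a\|$, the coarea (or elementary slicing) identity gives
$$
V'(b)=\frac{1}{\|a\|}\mathrm{vol}_{d-1}(H\cap[0,1]^d)\mbox{,}
$$
so it is enough to exhibit a closed-form expression for $V(b)$ and differentiate it.

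First I would reduce to the case where every coordinate of $a$ is positive. For each index $i$ with $a_i<0$, I would apply the affine involution $x_i\mapsto 1-x_i$: this preserves $[0,1]^d$, turns the inequality $a\mathord{\cdot}x\leq b$ into $a'\mathord{\cdot}x'\leq b'$ with $a'_i=|a_i|$ and $b'$ translated accordingly, and induces a bijection on vertices that shifts the parity of $\sigma(v)$ for half of them. A direct check shows that the resulting change in $(-1)^{\sigma(v)}$ exactly compensates the sign flip produced in the denominator $\pi(a)$, so the right-hand side of (\ref{HS.sec.1.thm.2.eq.0}) is invariant under each such reflection, and it suffices to prove the identity when all $a_i>0$. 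In this reduced case the Barrow--Smith formula \cite{BarrowSmith1979} reads
$$
V(b)=\frac{1}{d!\,\pi(a)}\sum_{v\in\{0,1\}^d}(-1)^{\sigma(v)}\bigl[(b-a\mathord{\cdot}v)_+\bigr]^d\mbox{,}
$$
where $[y]_+=\max(y,0)$. Since $y\mapsto[y_+]^d$ is of class $C^{d-1}$ with derivative $d[y_+]^{d-1}$ when $d\geq 2$, I can differentiate term by term under this finite sum to obtain
$$
V'(b)=\frac{1}{(d-1)!\,\pi(a)}\sum_{v:a\mathord{\cdot}v\leq b}(-1)^{\sigma(v)}(b-a\mathord{\cdot}v)^{d-1}\mbox{,}
$$
and multiplying by $\|a\|$ produces (\ref{HS.sec.1.thm.2.eq.0}).

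The only real difficulty lies in the bookkeeping of the reduction step: one has to verify that both the sign $(-1)^{\sigma(v)}$ and the factor $1/\pi(a)$ transform in such a way that their product is invariant under each of the $2^d$ coordinate reflections of $[0,1]^d$. Once this symmetry is in hand, the rest of the argument is a direct reading of the Barrow--Smith formula combined with the elementary identity $V'(b)=\mathrm{vol}_{d-1}(H\cap[0,1]^d)/\|a\|$.
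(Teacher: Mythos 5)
Your argument is correct and follows exactly the route the paper indicates: the paper does not prove Theorem \ref{LEH.sec.1.thm.2} itself but cites \cite{Pournin2021}, where it is obtained as a straightforward consequence of the Barrow--Smith formula \cite{BarrowSmith1979}, and your derivation (reduction to positive coefficients by coordinate reflections, then differentiation of the sublevel-set volume in $b$ together with $V'(b)=\mathrm{vol}_{d-1}(H\cap[0,1]^d)/\|a\|$) is a sound reconstruction of that deduction. The sign bookkeeping in your reflection step checks out: each reflection flips $(-1)^{\sigma(v)}$ for every vertex and simultaneously flips the sign of $\pi(a)$, so each summand is invariant.
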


While Theorems \ref{LEH.sec.1.thm.1} and \ref{LEH.sec.1.thm.2} are valid for arbitrary $a$ and $b$, it will be assumed from now on that $a$ belongs to $[0,+\infty[^d\mathord{\setminus}\{0\}$ in order to simplify the analysis. Note that this is without loss of generality thanks to the symmetries of the hypercube. In addition, $b$ will be expressed as
\begin{equation}\label{LEH.sec.1.eq.1}
b=\frac{\sigma(a)}{2}-t\mbox{,}
\end{equation}
where $t$ is a fixed number satisfying
$$
0\leq{t}<\frac{\sqrt{d}}{2}\mbox{.}
$$

In particular, $b$ will be thought of in the sequel as a function of $a$. It will be important to keep in mind that, while $t$ controls the distance between $H$ and the center of $[0,1]^d$, it only coincides with that distance when $\|a\|=1$. From now on, the $(d-1)$-dimensional volume of $H\cap[0,1]^d$ is denoted by $V$ and, just as $b$, this volume is thought of as a function of $a$ on $[0,+\infty[^d\mathord{\setminus}\{0\}$. In \cite{Pournin2021}, this function is shown to be continuous at every point of $[0,+\infty[^d$ with at least two non-zero coordinates and twice continuously differentiable on the open orthant $]0,+\infty[^d$. Here, the following stronger statement will be needed, that is obtained as a consequence of Theorem \ref{LEH.sec.1.thm.1}.

\begin{cor}\label{LEH.sec.1.cor.1}
If $d\geq3$, then $V$ is a continuously differentiable function of $a$ at every point of $[0,+\infty[^d$ with at least three non-zero coordinates. Moreover, if $j$ is an integer satisfying $1\leq{j}\leq{d}$ then, at any such point,
$$
\frac{\partial}{\partial{a_j}}\frac{V}{\|a\|}=\frac{1}{\pi}\int_{-\infty}^{+\infty}\frac{\partial}{\partial{a_j}}\!\left(\prod_{i=1}^d\frac{\sin(a_iu)}{a_iu}\right)\!\cos(2tu)du\mbox{.}
$$
\end{cor}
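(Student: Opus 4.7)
The plan is to derive the corollary directly from the integral representation of Theorem~\ref{LEH.sec.1.thm.1}, by justifying differentiation under the integral sign via dominated convergence. With $b=\sigma(a)/2-t$ one has $\cos(2[b-\sigma(a)/2]u)=\cos(2tu)$, and writing $f(x)=\sin(x)/x$ (with the convention $f(0)=1$), the formula becomes
\[
\frac{V}{\|a\|}=\frac{1}{\pi}\int_{-\infty}^{+\infty}\left(\prod_{i=1}^{d}f(a_iu)\right)\cos(2tu)\,du.
\]
The integrand is smooth in $a$ for each fixed $u$, and $\|a\|$ is smooth on $[0,+\infty[^d\setminus\{0\}$, so it suffices to show that $V/\|a\|$ is continuously differentiable at a fixed point $a^{*}$ with at least three non-zero coordinates, with partial derivatives obtained by differentiating under the integral. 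This reduces to exhibiting, for each index $j$, an integrable function of $u$ that dominates the partial derivative of the integrand uniformly over a small neighborhood $U$ of $a^{*}$.

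The pointwise derivative with respect to $a_j$ is $uf'(a_ju)\prod_{i\neq j}f(a_iu)\cos(2tu)$, and the analysis rests on the elementary estimates $|f(x)|\leq C/(1+|x|)$ for all $x$, $|f'(x)|\leq C$ globally, and $|f'(x)|\leq C/|x|$ for $|x|\geq 1$, all of which follow from $f'(x)=[x\cos x-\sin x]/x^2$ together with $|\sin x|\leq\min\{|x|,1\}$. I would fix three indices $i_1,i_2,i_3$ at which $a^{*}$ is non-zero and shrink $U$ so that $a_{i_k}\geq\delta$ on $U$ for $k=1,2,3$, for some $\delta>0$. The domination then splits into two cases. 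If $j\in\{i_1,i_2,i_3\}$, combining the $f'$-bounds gives $|uf'(a_ju)|\leq C/\delta$, the two remaining factors indexed by the other two $i_k$'s contribute $(1+\delta|u|)^{-2}$, and the remaining $f(a_iu)$ factors are each bounded by $1$. If instead $j\notin\{i_1,i_2,i_3\}$---the case occurring whenever $a_j^{*}=0$---I would use only $|uf'(a_ju)|\leq C|u|$, so as to avoid dividing by $a_j$, and observe that now all three factors indexed by $i_1,i_2,i_3$ contribute decay $(1+\delta|u|)^{-3}$. In both cases the derivative of the integrand is bounded by a multiple of $(1+u^2)^{-1}$ uniformly on $U$, hence integrable.

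With this dominating function, the standard differentiation-under-the-integral theorem yields the displayed formula at $a^{*}$, and applying dominated convergence again to the now-identified integrand shows that the right-hand side is continuous in $a$ throughout $U$; hence $V/\|a\|$, and therefore $V$, is continuously differentiable at $a^{*}$. The only substantive obstacle is the second case of the domination: when $a_j$ approaches zero one loses the bound $1/|a_j|$ on $|uf'(a_ju)|$ and must compensate by exploiting the decay of three distinct $\sin(a_iu)/(a_iu)$ factors, which is exactly the quantity of decay provided by the hypothesis of three non-zero coordinates, and explains why that assumption is sharp for the present argument.
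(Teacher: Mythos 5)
Your proof is correct and follows the same overall strategy as the paper: differentiate the P\'olya integral formula of Theorem~\ref{LEH.sec.1.thm.1} under the integral sign, justified by dominating the differentiated integrand, uniformly on a neighborhood, by an integrable function of $u$. The difference lies in how the domination is carried out, and your version is the more careful one. The paper reduces by symmetry to differentiating with respect to $a_3$ at points whose first two coordinates are positive, and asserts a bound of the form $\bigl|\partial_{a_3}\prod_i\sin(a_iu)/(a_iu)\bigr|\leq M\,|\sin(a_1u)\sin(a_2u)/(a_1a_2u^2)|$ with $M$ a constant on the whole ball, on the grounds that $\sin(x)/x$ and its derivative are bounded. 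But with $f(x)=\sin(x)/x$ one has $\partial_{a_3}f(a_3u)=uf'(a_3u)$, which is of order $|u|$ when $a_3\approx 1/|u|$ (for instance $uf'(1)$ at $a_3=1/u$), so it is not uniformly bounded on a ball containing points with $a_3$ near $0$ --- which is precisely the situation relevant to boundary points, the whole point of the corollary. Your split into the two cases $a_j^{*}\neq0$ (where $|uf'(a_ju)|\leq C/\delta$ and two decaying factors already give $O(u^{-2})$) and $a_j^{*}=0$ (where only $|uf'(a_ju)|\leq C|u|$ is available and all three decaying factors are needed) supplies exactly the estimate the argument requires, and your closing remark correctly identifies why the hypothesis of three non-zero coordinates is what makes the hard case work. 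The only other, immaterial, difference is that the paper phrases the justification as uniform convergence of the truncated integrals via Cauchy's criterion rather than as dominated convergence.
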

\begin{proof}
Since $\|a\|$ is a continuously differentiable function of $a$ on $\mathbb{R}^d$, it suffices to show that the partial derivatives of $V/\|a\|$ all exist and are continuous functions of $a$ at the considered points. Thanks to the symmetries of the hypercube, one just needs to prove the slightly stronger statement that the partial derivative of $V/\|a\|$ with respect to $a_3$ exists and is a continuous function of $a$ at every point of $\mathbb{R}^d$ whose first two coordinates are positive.

According to Theorem \ref{LEH.sec.1.thm.1} and to (\ref{LEH.sec.1.eq.1}), at any such point,
$$
\frac{V}{\|a\|}=\int_{-\infty}^{+\infty}\!\left(\prod_{i=1}^d\frac{\sin(a_iu)}{a_iu}\right)\!\cos(2tu)du\mbox{.}
$$

The result will be obtained as a consequence from Leibniz's rule on differentiation under the integral, according to which
$$
\frac{\partial}{\partial{a_3}}\int_{-\infty}^{+\infty}\!\left(\prod_{i=1}^d\frac{\sin(a_iu)}{a_iu}\right)\!\cos(2tu)du=\int_{-\infty}^{+\infty}\frac{\partial}{\partial{a_3}}\!\left(\prod_{i=1}^d\frac{\sin(a_iu)}{a_iu}\right)\!\cos(2tu)du\mbox{.}
$$

However, this rule requires that
\begin{equation}\label{LEH.sec.1.cor.1.eq.1}
\int_{-n}^{+n}\frac{\partial}{\partial{a_3}}\!\left(\prod_{i=1}^d\frac{\sin(a_iu)}{a_iu}\right)\!\cos(2tu)du
\end{equation}
converges uniformly when $n$ goes to infinity in a neighborhood of the considered point from $]0,+\infty[^2\mathord{\times}\mathbb{R}^{d-2}$. Before proceeding with the proof of this uniform convergence property, observe that this will not only provide the existence of the partial derivative but also its continuity. Consider a closed, $d$-dimensional ball $B$ centered at a point contained in $]0,+\infty[^2\mathord{\times}\mathbb{R}^{d-2}$. Pick the radius of $B$ small enough so that it is entirely contained in $]0,+\infty[^2\mathord{\times}\mathbb{R}^{d-2}$. As $\sin(x)/x$ and its derivative with respect to $x$ are bounded functions of $x$ on $\mathbb{R}$, there exists a positive number $M$ such that, for all $u$ in $\mathbb{R}$ and all $a$ in $B$,
$$
\left|\frac{\partial}{\partial{a_3}}\!\left(\prod_{i=1}^d\frac{\sin(a_iu)}{a_iu}\right)\!\cos(2tu)\right|\!\leq{M}\!\left|\frac{\sin(a_1u)\sin(a_2u)}{a_1a_2u^2}\right|\!\mbox{.}
$$

In turn, for any $u$ in $\mathbb{R}$ and $a$ in $B$,
$$
\left|\frac{\sin(a_1u)\sin(a_2u)}{a_1a_2u^2}\right|\!\leq\min\!\left\{1,\frac{1}{mu^2}\right\}\!\mbox{,}
$$
where $m$ is the smallest possible value for the product of the first two coordinates of a point contained in $B$. Therefore, by Cauchy's criterion, (\ref{LEH.sec.1.cor.1.eq.1}) converges uniformly on $B$ when $n$ goes to infinity, as desired.
\end{proof}

By a similar argument as for the proof of Corollary (\ref{LEH.sec.1.cor.1}), one can prove the following, also a consequence of Theorem \ref{LEH.sec.1.thm.1}.

\begin{cor}\label{LEH.sec.1.cor.2}
If $d\geq4$, then $V$ is a twice continuously differentiable function of $a$ at every point of $[0,+\infty[^d$ with at least four non-zero coordinates. Moreover, if $j$ and $k$ are integers satisfying $1\leq{j}\leq{k}\leq{d}$ then, at any such point,
$$
\frac{\partial^2}{\partial{a_j}\partial{a_k}}\frac{V}{\|a\|}=\frac{1}{\pi}\int_{-\infty}^{+\infty}\frac{\partial^2}{\partial{a_j}\partial{a_k}}\!\left(\prod_{i=1}^d\frac{\sin(a_iu)}{a_iu}\right)\!\cos(2tu)du\mbox{.}
$$
\end{cor}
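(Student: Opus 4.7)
The plan is to follow the blueprint of the proof of Corollary~\ref{LEH.sec.1.cor.1}, now applying Leibniz's rule on differentiation under the integral sign twice. Since $\|a\|$ is smooth on $\mathbb{R}^d\setminus\{0\}$, it suffices to show that every second partial of $V/\|a\|$ exists and depends continuously on $a$ at every point with at least four non-zero coordinates. By the permutation symmetries of the hypercube, the problem reduces to the slightly stronger statement that, for every pair $(j,k)\in\{1,\ldots,d\}^2$ with $j\leq k$, the second partial $\partial^2(V/\|a\|)/(\partial a_j\partial a_k)$ exists and is continuous at every point of $\mathbb{R}^d$ whose first four coordinates are positive: indeed, any point with at least four non-zero coordinates can be brought into that form by a permutation moving four of its non-zero coordinates into positions $1,\ldots,4$.

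At such a point, Theorem~\ref{LEH.sec.1.thm.1} combined with (\ref{LEH.sec.1.eq.1}) gives
$$
\frac{V}{\|a\|}=\int_{-\infty}^{+\infty}\!\left(\prod_{i=1}^d\frac{\sin(a_iu)}{a_iu}\right)\!\cos(2tu)\,du\mbox{,}
$$
and two applications of Leibniz's rule yield the claimed integral formula, provided
$$
\int_{-n}^{+n}\frac{\partial^2}{\partial a_j\partial a_k}\!\left(\prod_{i=1}^d\frac{\sin(a_iu)}{a_iu}\right)\!\cos(2tu)\,du
$$
converges uniformly in $a$ on a closed ball $B$ contained in $\{a_1,a_2,a_3,a_4>0\}$ as $n$ goes to infinity. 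This uniform convergence simultaneously provides the existence of the partial derivative, the integral representation, and its continuity in $a$. Writing $f(x)=\sin(x)/x$, the analytic ingredients are the bounds $|f(x)|\leq\min\{1,1/|x|\}$, $|f'(x)|\leq M$, $|f''(x)|\leq M$, together with the refined decay $|f''(x)|\leq C/|x|$ for $|x|\geq 1$.

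The main obstacle is the same-variable case $j=k$: there the differentiation produces a factor $u^2f''(a_ju)$ that grows like $|u|$ at infinity when $a_j>0$ and like $u^2$ when $a_j=0$. Two of the positive coordinates $a_1,\ldots,a_4$ are therefore no longer enough; I will instead use three of them---combining the $|u|/a_j$ bound on the differentiated factor with $|f(a_1u)f(a_2u)f(a_3u)|\leq 1/(a_1a_2a_3|u|^3)$---when $a_j$ itself lies in $\{a_1,\ldots,a_4\}$, and all four when it does not. The mixed case $j\neq k$ is handled along the same lines but is easier: when $a_j$ and $a_k$ are both positive on $B$, the factor $uf'(a_ju)\cdot uf'(a_ku)$ is bounded and the two-coordinate protection $|f(a_1u)f(a_2u)|\leq \min\{1,1/(a_1a_2u^2)\}$ already gives an integrable majorant exactly as in the proof of Corollary~\ref{LEH.sec.1.cor.1}; otherwise one falls back on $|uf'(au)|\leq M|u|$ and protects with three or four of $a_1,\ldots,a_4$. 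In every configuration the resulting majorant is $O(u^2)$ near $0$ and $O(1/u^2)$ at infinity, hence integrable on $\mathbb{R}$, and Cauchy's criterion then delivers the needed uniform convergence on $B$.
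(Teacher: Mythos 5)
Your argument is correct, and it follows the same overall route as the paper's proof: reduce by the symmetries of the hypercube, express $V/\|a\|$ via Theorem~\ref{LEH.sec.1.thm.1} and (\ref{LEH.sec.1.eq.1}), and justify differentiation under the integral sign through uniform convergence of the truncated integrals on a closed ball, exactly as in Corollary~\ref{LEH.sec.1.cor.1}. Where the two proofs genuinely differ is the domination step. The paper reduces to points whose first \emph{two} coordinates are positive and asserts that the argument of Corollary~\ref{LEH.sec.1.cor.1} carries over because $\sin(x)/x$ and its first two derivatives are bounded. Writing $f(x)=\sin(x)/x$, that reasoning bounds the integrand in the case $j=k$ by a constant times $u^2\,|f(a_1u)f(a_2u)|\leq u^2/(a_1a_2u^2)$, which is bounded but not integrable, so the uniform convergence needed for Leibniz's rule does not follow from boundedness alone. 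Your proposal identifies this obstacle explicitly and supplies what is needed to overcome it: the $O(1/|x|)$ decay of $f'$ and $f''$ away from the origin (so that $uf'(a_ju)$ stays bounded and $u^2f''(a_ju)$ grows only like $|u|/a_j$ when $a_j$ is bounded away from $0$ on $B$), together with three or four protecting factors $|f(a_iu)|\leq 1/(a_i|u|)$ drawn from the coordinates kept positive, chosen according to whether the differentiated index is among them. Each of your cases yields a majorant that is $O(u^2)$ near $0$ and $O(1/u^2)$ at infinity, the case analysis is exhaustive, and Cauchy's criterion then gives the uniform convergence. So the strategy is the paper's, but your sharper estimates are not optional refinements; they are what actually makes the domination integrable, which is also why your reduction keeps four positive coordinates in reserve rather than two.
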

\begin{proof}
As in the proof of Corollary \ref{LEH.sec.1.cor.1}, by the symmetries of the hypercube, it suffices to show the slightly stronger statement that the partial derivative
$$
\frac{\partial^2}{\partial{a_3}\partial{a_k}}\frac{V}{\|a\|}
$$
exists and is a continuous function of $a$ when $k$ is equal to $3$ or to $4$ at every point of $\mathbb{R}^d$ whose first two coordinates are positive. This follows from the same argument than for Corollary \ref{LEH.sec.1.cor.1}, by the observation that $\sin(x)/x$ and its first two derivatives with respect to $x$ are bounded functions of $x$ on $\mathbb{R}$.
\end{proof}

The remainder of the section is devoted to establishing alternative expressions for the partial derivatives of $V/\|a\|$ using Theorem \ref{LEH.sec.1.thm.2} instead of Theorem \ref{LEH.sec.1.thm.1}. From now on, given a subset $X$ of $\mathbb{R}$ and an integer $n$ such that $1\leq{n}\leq{d-1}$, the cartesian product $X^n$ is identified with the subset of $X^n\mathord{\times}\mathbb{R}^{d-n}$ made up of the points whose last $d-n$ coordinates are equal to $0$. 

\begin{lem}\label{LEH.sec.1.lem.1}
Consider an integer $n$ satisfying $3\leq{n}\leq{d}$ and an integer $j$. If $1\leq{j}\leq{n}$, then at any point $a$ from $]0,+\infty[^n$,
$$
\frac{\partial}{\partial{a_j}}\frac{V}{\|a\|}=\sum\frac{(-1)^{\sigma(v)}}{(n-1)!}\frac{\partial}{\partial{a_j}}\frac{(b-a\mathord{\cdot}v)^{n-1}}{\pi(a)}\mbox{,}
$$
where the sum is over the vertices $v$ of $[0,1]^n$ that satisfy $a\mathord{\cdot}v\leq{b}$. If, however $n<j\leq{d}$ then, at any point $a$ from $]0,+\infty[^n$,
$$
\frac{\partial}{\partial{a_j}}\frac{V}{\|a\|}=0\mbox{.}
$$
\end{lem}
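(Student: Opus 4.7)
The plan is to treat the two cases of the lemma separately.

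For the case $1\leq j\leq n$, the key observation is that when the last $d-n$ coordinates of $a$ vanish, the hyperplane $H$ depends only on the first $n$ coordinates, so $H\cap[0,1]^d$ is the Cartesian product of $\bar H\cap[0,1]^n$ (where $\bar H$ is the corresponding hyperplane of $\mathbb{R}^n$) with $[0,1]^{d-n}$. The latter factor contributes a $(d-n)$-volume equal to $1$, so $V$ coincides with the $(n-1)$-volume of $\bar H\cap[0,1]^n$. Since the truncated vector $\bar a=(a_1,\ldots,a_n)$ lies in $]0,+\infty[^n$ and $n\geq 3\geq 2$, Theorem \ref{LEH.sec.1.thm.2} applied in dimension $n$ yields
$$
\frac{V}{\|a\|}=\sum_v\frac{(-1)^{\sigma(v)}((b-a\mathord{\cdot}v)_+)^{n-1}}{(n-1)!\,\pi(a)},
$$
where the sum is over \emph{all} vertices $v$ of $[0,1]^n$ and $(\cdot)_+$ denotes the positive part (so that the terms with $a\mathord{\cdot}v>b$ vanish). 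Note that $\pi(\bar a)=\pi(a)$ because the extra coordinates of $a$ are zero and $\pi$ only records the non-zero ones.

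I would then argue that this finite sum can be differentiated term by term. The function $x\mapsto(x_+)^{n-1}$ is of class $C^{n-2}$, and since $n\geq 3$ this is at least $C^1$; combined with the smoothness of $\pi(a)$ on $]0,+\infty[^n$, each summand is $C^1$ in $a$. The terms with $a\mathord{\cdot}v>b$ vanish identically in a neighborhood of the point and may be discarded. For the surviving terms, $(b-a\mathord{\cdot}v)^{n-1}$ and $((b-a\mathord{\cdot}v)_+)^{n-1}$ agree together with their first $a_j$-derivatives at $a$, since at any vertex with $a\mathord{\cdot}v=b$ the derivative picks up a factor $(b-a\mathord{\cdot}v)^{n-2}$ that still vanishes. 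Replacing $(b-a\mathord{\cdot}v)_+$ by $b-a\mathord{\cdot}v$ in the surviving terms produces the desired formula.

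For the case $n<j\leq d$, I would invoke Corollary \ref{LEH.sec.1.cor.1}, which applies because $a$ has $n\geq 3$ non-zero coordinates. It expresses $\partial(V/\|a\|)/\partial a_j$ as an integral of $\cos(2tu)$ against the $a_j$-derivative of $\prod_{i=1}^d\sin(a_iu)/(a_iu)$. Since $j>n$, this product contains the factor $\sin(a_ju)/(a_ju)$, whose $a_j$-derivative equals $u\cdot(d/dx)[\sin(x)/x]|_{x=a_ju}$. At $a_j=0$ this vanishes for every $u$, as recalled after Theorem \ref{LEH.sec.1.thm.1}, so the integrand is identically zero and the integral vanishes.

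The main obstacle is the boundary case in the first part: justifying the term-by-term differentiation at vertices $v$ with $a\mathord{\cdot}v=b$. This is precisely where the hypothesis $n\geq 3$ enters, as it is exactly what ensures the $C^1$ regularity of $x\mapsto(x_+)^{n-1}$ needed to commute differentiation with the finite sum.
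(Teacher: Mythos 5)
Your proof is correct and follows essentially the same route as the paper: both cases rest on the restriction of Theorem \ref{LEH.sec.1.thm.2} to $[0,1]^n$ with the observation that the boundary terms $a\mathord{\cdot}v=b$ contribute vanishing derivatives because $n\geq3$, and on Corollary \ref{LEH.sec.1.cor.1} together with the vanishing of the derivative of $\sin(x)/x$ at $0$ for the case $j>n$. Your explicit justification of the term-by-term differentiation via the $C^{n-2}$ regularity of $x\mapsto(x_+)^{n-1}$ is a slightly more careful write-up of what the paper leaves implicit, but it is not a different argument.
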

\begin{proof}
Note that, when $a$ is a point in $]0,+\infty[^n$, the $n$-dimensional volume of $H\cap[0,1]^n$ coincides with $V$. As the first $n$ coordinates of $a$ are non-zero, one therefore obtains from Theorem~\ref{LEH.sec.1.thm.2} that
\begin{equation}\label{LEH.sec.1.lem.1.eq.1}
\frac{V}{\|a\|}=\sum\frac{(-1)^{\sigma(v)}(b-a\mathord{\cdot}v)^{n-1}}{(n-1)!\pi(a)}\mbox{,}
\end{equation}
where the sum is over the vertices $v$ of $[0,1]^n$ satisfying $a\mathord{\cdot}v\leq{b}$.

Assume that $1\leq{j}\leq{n}$ and observe that, if a vertex $v$ of the hypercube $[0,1]^n$ satisfies $a\mathord{\cdot}v=b$, then the partial derivative
$$
\frac{\partial}{\partial{a_j}}\frac{(b-a\mathord{\cdot}v)^{n-1}}{\pi(a)}
$$
vanishes at $a$ because $n\geq3$. The desired expression for the partial derivative of $V/\|a\|$ with respect to $a_j$ therefore immediately follows from (\ref{LEH.sec.1.lem.1.eq.1}).

Now assume that $n<j\leq{d}$ and recall that $n\geq3$. Hence, according to Corollary \ref{LEH.sec.1.cor.1}, at any point $a$ of the orthant $]0,+\infty[^n$,
$$
\frac{\partial}{\partial{a_j}}\frac{V}{\|a\|}=\frac{1}{\pi}\int_{-\infty}^{+\infty}\frac{\partial}{\partial{a_j}}\!\left(\prod_{i=1}^d\frac{\sin(a_iu)}{a_iu}\right)\!\cos(2tu)du\mbox{.}
$$

Recall that, when $x$ is equal to $0$,
$$
\frac{d}{dx}\frac{\sin(x)}{x}
$$
vanishes. As $a_j=0$, for any point $a$ in $]0,+\infty[^n$,
$$
\frac{\partial}{\partial{a_j}}\!\left(\prod_{i=1}^d\frac{\sin(a_iu)}{a_iu}\right)=0
$$
at this point and for any $u$ in $\mathbb{R}$. Hence, the partial derivative of $V/\|a\|$ with respect to $a_j$ vanishes at any point of $]0,+\infty[^n$, as desired.
\end{proof}

The following lemma is proven using a similar argument.

\begin{lem}\label{LEH.sec.1.lem.2}
Consider an integer $n$ satisfying $4\leq{n}\leq{d}$ and two integers $j$ and $k$. If $1\leq{j}\leq{k}\leq{n}$, then at any point $a$ from $]0,+\infty[^n$,
$$
\frac{\partial^2}{\partial{a_j}\partial{a_k}}\frac{V}{\|a\|}=\sum\frac{(-1)^{\sigma(v)}}{(n-1)!}\frac{\partial^2}{\partial{a_j}\partial{a_k}}\frac{(b-a\mathord{\cdot}v)^{n-1}}{\pi(a)}\mbox{,}
$$
where the sum is over the vertices $v$ of $[0,1]^n$ that satisfy $a\mathord{\cdot}v\leq{b}$. If however, $1\leq{j}<k$ and $n<k\leq{d}$ then, at any such point,
$$
\frac{\partial^2}{\partial{a_j}\partial{a_k}}\frac{V}{\|a\|}=0\mbox{.}
$$
\end{lem}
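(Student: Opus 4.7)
The plan is to mirror the argument used for Lemma \ref{LEH.sec.1.lem.1} at the level of second derivatives, which explains why the hypothesis $n\geq3$ is strengthened to $n\geq4$ throughout. The proof splits according to the two cases of the statement.

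In the first case $1\leq{j}\leq{k}\leq{n}$, I would start from the identity (\ref{LEH.sec.1.lem.1.eq.1}) on $]0,+\infty[^n$, which follows from Theorem \ref{LEH.sec.1.thm.2} applied in dimension $n$, and differentiate it termwise. The only subtlety is that the index set of the sum depends on $a$ through the condition $a\mathord{\cdot}v\leq{b}$. Recalling that $b=\sigma(a)/2-t$ is smooth in $a$, a direct computation gives
$$
\frac{\partial^2}{\partial{a_j}\partial{a_k}}(b-a\mathord{\cdot}v)^{n-1}=(n-1)(n-2)(b-a\mathord{\cdot}v)^{n-3}\!\left(\tfrac{1}{2}-v_j\right)\!\left(\tfrac{1}{2}-v_k\right)\!\mbox{,}
$$
which vanishes at any $a$ satisfying $a\mathord{\cdot}v=b$ precisely because $n-3\geq1$. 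Combined with the smoothness and non-vanishing of $1/\pi(a)$ on $]0,+\infty[^n$, this shows that each summand has vanishing value and vanishing first and second partial derivatives at the threshold $a\mathord{\cdot}v=b$. Hence including or excluding that term does not affect the $C^2$ behaviour of $V/\|a\|$, and termwise differentiation is legitimate.

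For the second case, $1\leq{j}<k$ with $n<k\leq{d}$, the identification forces $a_k=0$. Applying Corollary \ref{LEH.sec.1.cor.2} (valid since $d\geq{n}\geq4$) and expanding the mixed partial derivative with the product rule---which simplifies thanks to $j\neq{k}$---the integrand becomes
$$
\frac{\partial}{\partial{a_j}}\frac{\sin(a_ju)}{a_ju}\cdot\frac{\partial}{\partial{a_k}}\frac{\sin(a_ku)}{a_ku}\cdot\!\prod_{\substack{i=1\\i\neq j,k}}^d\!\frac{\sin(a_iu)}{a_iu}\cdot\cos(2tu)\mbox{.}
$$
Since the derivative of $\sin(x)/x$ vanishes at $x=0$ (as recalled just after Theorem \ref{LEH.sec.1.thm.1}), the factor involving $a_k$ is zero for every $u$ when $a_k=0$. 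The integrand therefore vanishes pointwise, and so does the integral, which gives the second claim.

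The main obstacle I anticipate lies in the first case: cleanly formalising the $C^2$-stability of the sum across vertices that enter or leave the active index set. Once the order-$(n-3)$ vanishing of $(b-a\mathord{\cdot}v)^{n-1}$ together with its first two partial derivatives at $a\mathord{\cdot}v=b$ is in hand, this reduces to pasting smooth pieces across a threshold; nevertheless, it is the mechanism that forces $n\geq4$ and deserves explicit justification, exactly paralleling the role played by $n\geq3$ in Lemma \ref{LEH.sec.1.lem.1}.
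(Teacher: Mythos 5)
Your proposal is correct and follows essentially the same route as the paper: termwise differentiation of the Theorem \ref{LEH.sec.1.thm.2} expression in the first case, justified by the order-$(n-3)$ vanishing of the boundary terms at $a\mathord{\cdot}v=b$ (which is exactly where $n\geq4$ is used), and the vanishing of $\frac{\partial}{\partial a_k}\frac{\sin(a_ku)}{a_ku}$ at $a_k=0$ inside the integral of Corollary \ref{LEH.sec.1.cor.2} in the second case. The extra care you take in spelling out the $C^2$-pasting across the threshold is a more explicit version of the paper's one-line justification, not a different argument.
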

\begin{proof}
As in the proof of Lemma \ref{LEH.sec.1.lem.1}, when $a$ is a point from $]0,+\infty[^n$,
\begin{equation}\label{LEH.sec.1.lem.2.eq.1}
\frac{V}{\|a\|}=\sum\frac{(-1)^{\sigma(v)}(b-a\mathord{\cdot}v)^{n-1}}{(n-1)!\pi(a)}\mbox{,}
\end{equation}
where the sum is over the vertices $v$ of $[0,1]^n$ satisfying $a\mathord{\cdot}v\leq{b}$.
Moreover, when $1\leq{j}\leq{k}\leq{n}$, and $v$ is a vertex of the hypercube $[0,1]^n$ such that $a\mathord{\cdot}v=b$,
$$
\frac{\partial^2}{\partial{a_j}\partial{a_k}}\frac{(b-a\mathord{\cdot}v)^{n-1}}{\pi(a)}
$$
vanishes at $a$ because $n\geq4$ and the result follows from (\ref{LEH.sec.1.lem.2.eq.1}).

Now assume that $1\leq{j}<k$ and $n<k\leq{d}$. In that case,
$$
\frac{\partial^2}{\partial{a_j}\partial{a_k}}\!\left(\prod_{i=1}^d\frac{\sin(a_iu)}{a_iu}\right)
$$
vanishes at any point $a$ contained in $]0,+\infty[^n$ and for any number $u$ in $\mathbb{R}$ because for any such point, $a_k=0$ and, therefore
$$
\frac{\partial}{\partial{a_k}}\frac{\sin(a_ku)}{a_ku}=0\mbox{.}
$$

The result then immediately follows from Corollary \ref{LEH.sec.1.cor.2}.
\end{proof}

Observe that Lemma \ref{LEH.sec.1.lem.2} provides all the second order partial derivatives of $V/\|a\|$, except the ones with respect to $a_j$ in the case when $j$ is greater than $n$. An expression for them can be obtained from a different argument.

\begin{lem}\label{LEH.sec.1.lem.3}
If $n$ and $j$ are two integers satisfying $4\leq{n}\leq{d}$ and $n<j\leq{d}$, then at any point $a$ contained in $]0,+\infty[^n$,
$$
\frac{\partial^2}{\partial{a_j^2}}\frac{V}{\|a\|}=\sum\frac{(-1)^{\sigma(v)}(b-a\mathord{\cdot}v)^{n-3}}{12(n-3)!\pi(a)}
$$
where the sum is over the vertices $v$ of $[0,1]^n$ that satisfy $a\mathord{\cdot}v\leq{b}$.
\end{lem}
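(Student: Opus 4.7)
The plan is to reduce the computation to a second derivative in the parameter $t$ and then invoke the discrete sum form of $V/\|a\|$. First, I would apply Corollary~\ref{LEH.sec.1.cor.2} to write
$$
\frac{\partial^2}{\partial a_j^2}\frac{V}{\|a\|}=\frac{1}{\pi}\int_{-\infty}^{+\infty}\frac{\partial^2}{\partial a_j^2}\!\left(\prod_{i=1}^d\frac{\sin(a_iu)}{a_iu}\right)\!\cos(2tu)\,du.
$$
At the considered point, $a_i=0$ for every $i>n$, so $\sin(a_iu)/(a_iu)=1$ for all $i>n$ with $i\neq j$. Since only the factor of index $j$ depends on $a_j$, differentiating it twice at $a_j=0$ produces $u^2$ times the second derivative of $\sin(x)/x$ at $x=0$, which equals $-1/3$. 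The integrand then simplifies to $-(u^2/3)\prod_{i=1}^n\sin(a_iu)/(a_iu)\cos(2tu)$.

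Next, I would rewrite $u^2\cos(2tu)=-\tfrac{1}{4}\frac{\partial^2}{\partial t^2}\cos(2tu)$ and pull the differentiation in $t$ outside the integral. This exchange is legitimate by a Leibniz-rule argument modeled on the proof of Corollary~\ref{LEH.sec.1.cor.1}: since $a_1,a_2>0$, the factor $\sin(a_1u)\sin(a_2u)/(a_1a_2u^2)$ dominates the integrand uniformly on a compact neighborhood of the considered $(t,a)$ and provides an integrable bound for $\cos(2tu)$ together with its first two derivatives in $t$. Identifying the remaining integral, by Theorem~\ref{LEH.sec.1.thm.1} applied in dimension $n$, with $\pi V/\|a\|$ yields
$$
\frac{\partial^2}{\partial a_j^2}\frac{V}{\|a\|}=\frac{1}{12}\frac{d^2}{dt^2}\frac{V}{\|a\|}.
$$

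Finally, I would use Theorem~\ref{LEH.sec.1.thm.2} applied to the $n$-dimensional hypercube (which yields the same $V$, since $a_{n+1}=\cdots=a_d=0$) to write
$$
\frac{V}{\|a\|}=\sum\frac{(-1)^{\sigma(v)}(b-a\mathord{\cdot}v)^{n-1}}{(n-1)!\pi(a)},
$$
the sum ranging over vertices $v$ of $[0,1]^n$ with $a\mathord{\cdot}v\leq b$, and with $b=\sigma(a)/2-t$. Since $d(b-a\mathord{\cdot}v)/dt=-1$, term-by-term differentiation twice gives
$$
\frac{d^2}{dt^2}\frac{V}{\|a\|}=\sum\frac{(-1)^{\sigma(v)}(b-a\mathord{\cdot}v)^{n-3}}{(n-3)!\pi(a)},
$$
so that dividing by $12$ produces the claimed formula. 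Term-by-term differentiation is justified because, as $t$ varies, vertices enter or leave the sum precisely when $a\mathord{\cdot}v=b$, and since $n\geq4$ the summand $(b-a\mathord{\cdot}v)^{n-1}$ vanishes there to order at least $3$, so the sum is twice continuously differentiable in $t$.

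The chief technical step is the exchange of the second derivative in $t$ and the improper integral in the middle paragraph, but this is routine and follows exactly the template already used in the proofs of Corollaries~\ref{LEH.sec.1.cor.1} and \ref{LEH.sec.1.cor.2}.
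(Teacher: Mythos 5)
Your argument is correct in substance but follows a genuinely different route from the paper. The paper proves this lemma entirely within the discrete-sum picture: it perturbs $a_j$ to a small positive value $x_j$, applies Theorem~\ref{LEH.sec.1.thm.2} to the resulting point of $]0,+\infty[^{n+1}$, pairs each vertex $v$ of $[0,1]^n$ with the vertex of $[0,1]^{n+1}$ obtained by setting the $j$-th coordinate to $1$, and evaluates the limit as $x_j\to0$ of the second derivative of the sum by two applications of l'H{\^o}pital's rule. Your proof instead exploits the integral representation to convert the $a_j$-derivative into a $t$-derivative: since $\partial_{a_j}^2$ applied to the factor $\sin(a_ju)/(a_ju)$ at $a_j=0$ produces $-u^2/3$ while $\partial_t^2$ applied to $\cos(2tu)$ produces $-4u^2$, one gets the clean identity $\partial_{a_j}^2(V/\|a\|)=\tfrac{1}{12}\,\partial_t^2(V/\|a\|)$, after which differentiating the discrete sum term by term in $t$ (legitimate because each summand vanishes to order $n-1\geq3$ at the transition values of $t$) gives the stated formula. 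This is shorter and isolates a structural identity that the paper's computation leaves implicit; the paper's route, in exchange, stays self-contained in the vertex-sum formula and avoids a second interchange of limit and improper integral.

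There is one quantitative slip in your justification of that interchange: bounding the product by the two factors $\sin(a_1u)\sin(a_2u)/(a_1a_2u^2)$ only yields $O(1/u^2)$ decay, and after multiplying by the $u^2$ coming from $\partial_t^2\cos(2tu)$ the integrand is merely bounded, not integrable, so the uniform convergence does not follow from the estimate as you state it. You need to retain at least four of the positive coordinates of $a$ (available since $n\geq4$) to get $O(1/u^4)$ decay from the product, which then dominates the twice-differentiated integrand by an integrable $O(1/u^2)$ bound. With that correction the proof is complete.
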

\begin{proof}
By symmetry, it suffices to prove the lemma in the case when $j=n+1$. Consider a point $a$ in $]0,+\infty[^n$. The vertex set of $[0,1]^{n+1}$ can be decomposed into the subset $\mathcal{V}^-$ of the vertices $v$ such that $a\mathord{\cdot}v-b$ is negative, the subset $\mathcal{V}^+$ of the vertices $v$ such that this quantity is positive, and the (possibly empty) subset of the remaining vertices, for which this quantity is equal to $0$.

For any vertex $v$ of the $(n+1)$-dimensional hypercube $[0,1]^{n+1}$, consider the set $U^-_v$ of the points $x$ contained in $\mathbb{R}^{n+1}$ such that $h_v(x)<0$ and the set $U^+_v$ of the points $x$ in $\mathbb{R}^{n+1}$ satisfying $h_v(x)>0$ where
$$
h_v(x)=\frac{\sigma(x)}{2}-t-x\mathord{\cdot}v\mbox{.}
$$

Observe that both $U^-_v$ and $U^+_v$ are open subsets of $\mathbb{R}^{n+1}$. Therefore,
$$
U=\left[\bigcap_{v\in\mathcal{V}^-}\!\!U^-_v\right]\cap\left[\bigcap_{v\in\mathcal{V}^+}\!\!U^+_v\right]
$$
is an open subset of $\mathbb{R}^{n+1}$ as well. Note that by definition, $a$ belongs to $U$.

In the remainder of the proof, $x$ denotes a point contained in $U\cap]0,+\infty[^{n+1}$ that shares its first $n$ coordinates with $a$. In particular, $\pi(x)=\pi(a)x_j$. As none of the first $n+1$ coordinates of $x$ is equal to $0$ and as $n$ is at least $4$, it follows from Theorem \ref{LEH.sec.1.thm.2} and Corollary \ref{LEH.sec.1.cor.2} that
$$
\begin{array}{rcl}
\displaystyle\frac{\partial^2}{\partial{a_j^2}}\frac{V}{\|a\|} & \!\!\!\!=\!\!\!\! & \displaystyle\lim_{x_j\rightarrow0}\frac{\partial^2}{\partial{x_j^2}}\sum\frac{(-1)^{\sigma(v)}[h_v(x)]^n}{n!\pi(x)}\mbox{,}\\[\bigskipamount]
 & \!\!\!\!=\!\!\!\! & \displaystyle\lim_{x_j\rightarrow0}\sum\frac{(-1)^{\sigma(v)}}{n!\pi(a)}\frac{\partial^2}{\partial{x_j^2}}\frac{[h_v(x)]^n}{x_j}\mbox{,}\\
\end{array}
$$
where the sums are over the elements $v$ of the union of $\mathcal{V}^+$ with a (possibly empty) subset of $\mathcal{V}^\circ$. Note that, for any vertex $v$ of $[0,1]^{n+1}$,
\begin{multline}\label{LEH.sec.1.lem.3.eq.0}
\frac{\partial^2}{\partial{x_j^2}}\frac{[h_v(x)]^n}{x_j}=2\frac{[h_v(x)]^n}{x_j^3}-2n\left(\frac{1}{2}-v_j\right)\frac{[h_v(x)]^{n-1}}{x_j^2}\\
\hfill+\frac{n(n-1)}{4}\frac{[h_v(x)]^{n-2}}{x_j}\mbox{.}
\end{multline}

However, one obtains from l'H{\^o}pital's rule that, when $v$ belongs to $\mathcal{V}^\circ$,
$$
\lim_{x_j\rightarrow0}\frac{[h_v(x)]^n}{x_j^3}=\lim_{x_j\rightarrow0}\frac{[h_v(x)]^{n-1}}{x_j^2}=\lim_{x_j\rightarrow0}\frac{[h_v(x)]^{n-2}}{x_j}=\lim_{x_j\rightarrow0}[h_v(x)]^{n-3}=0
$$
because $n\geq4$ and $h_v(a)=0$. As a consequence,
\begin{equation}\label{LEH.sec.1.lem.3.eq.1}
\frac{\partial^2}{\partial{a_j^2}}\frac{V}{\|a\|}=\lim_{x_j\rightarrow0}\sum_{v\in\mathcal{V}^+}\frac{(-1)^{\sigma(v)}}{n!\pi(a)}\frac{\partial^2}{\partial{x_j^2}}\frac{[h_v(x)]^n}{x_j}
\end{equation}

Now observe that, since $a_j=0$, a vertex $v$ of $[0,1]^n$ belongs to $\mathcal{V}^+$ if and only if the vertex $w$ of $[0,1]^{n+1}$ that shares its first $n$ coordinates with $v$ but such that $w_j=1$ also belongs to $\mathcal{V}^+$. Hence, gathering the terms corresponding to each such pair of vertices $v$ and $w$, (\ref{LEH.sec.1.lem.3.eq.1}) can be rewritten into
\begin{equation}\label{LEH.sec.1.lem.3.eq.2}
\frac{\partial^2}{\partial{a_j^2}}\frac{V}{\|a\|}=\lim_{x_j\rightarrow0}\sum\frac{(-1)^{\sigma(v)}}{n!\pi(a)}\frac{\partial^2}{\partial{x_j^2}}\frac{[h_v(x)]^n-[h_v(x)-x_j]^n}{x_j}
\end{equation}
where the sum is over the vertices $v$ of $[0,1]^n$ satisfying $a\mathord{\cdot}v<b$. However, for any vertex $v$ of $[0,1]^n$, one obtains from (\ref{LEH.sec.1.lem.3.eq.0}) that
$$
\frac{\partial^2}{\partial{x_j^2}}\frac{[h_v(x)]^n-[h_v(x)-x_j]^n}{x_j}=r_v(x)+s_v(x)
$$
where
$$
r_v(x)=\frac{2[h_v(x)]^n-2[h_v(x)-x_j]^n-nx_j[h_v(x)]^{n-1}-nx_j[h_v(x)-x_j]^{n-1}}{x_j^3}
$$
and
$$
s_v(x)=n(n-1)\frac{[h_v(x)]^{n-2}-[h_v(x)-x_j]^{n-2}}{4x_j}\mbox{.}
$$

It turns out that $r_v(x)$ and $s_v(x)$ both admit limits when $x_j$ goes to $0$. Observe that, in the above expression of $r_v(x)$ as a ratio, both the numerator and the denominator go to zero as $x_j$ goes to $0$. By applying l'H{\^o}pital's rule twice,
$$
\begin{array}{rcl}
\displaystyle\lim_{x_j\rightarrow0}r_v(x) & \!\!\!\!=\!\!\!\! & \displaystyle\lim_{x_j\rightarrow0}-n(n-1)\frac{[h_v(x)]^{n-2}-[h_v(x)-x_j]^{n-2}}{6x_j}\\[\bigskipamount]
 & \!\!\!\!=\!\!\!\! & \displaystyle\lim_{x_j\rightarrow0}-n(n-1)(n-2)\frac{[h_v(x)]^{n-3}+[h_v(x)-x_j]^{n-3}}{12}\\[\bigskipamount]
 & \!\!\!\!=\!\!\!\! & \displaystyle-n(n-1)(n-2)\frac{(b-a\mathord{\cdot}v)^{n-3}}{6}\mbox{.}\\
\end{array}
$$

Similarly, the numerator and the denominator of the expression of $s_v(x)$ as a ratio both go to zero as $x_j$ goes to $0$. By l'H{\^o}pital's rule,
$$
\begin{array}{rcl}
\displaystyle\lim_{x_j\rightarrow0}s_v(x) & \!\!\!\!=\!\!\!\! & \displaystyle\lim_{x_j\rightarrow0}n(n-1)(n-2)\frac{[h_v(x)]^{n-3}+[h_v(x)-x_j]^{n-3}}{8}\\[\bigskipamount]
 & \!\!\!\!=\!\!\!\! & \displaystyle{n(n-1)(n-2)\frac{(b-a\mathord{\cdot}v)^{n-3}}{4}}\mbox{.}\\
\end{array}
$$

As a consequence, (\ref{LEH.sec.1.lem.3.eq.2}) yields
$$
\frac{\partial^2}{\partial{a_j^2}}\frac{V}{\|a\|}=\sum\frac{(-1)^{\sigma(v)}(b-a\mathord{\cdot}v)^{n-3}}{12(n-3)!\pi(a)}
$$
where the sum is over the vertices $v$ of $[0,1]^n$ such that $a\mathord{\cdot}v<b$. As $n\geq4$, adding to this sum the terms that correspond to the vertices $v$ of $[0,1]^n$ such that $a\mathord{\cdot}v=b$ does not affect it, providing the desired equality.
\end{proof}

\section{Local maxima near the center of the hypercube}\label{LEH.sec.2}

The following general result will be used later in the section in order to establish the local maximality of $V$ when $t$ is close enough to $0$, at the point $a$ whose first $n$ coordinates are $1/\sqrt{n}$ and whose other coordinates are $0$. It will also be used to establish the local maximality results of Section \ref{LEH.sec.3}.

\begin{thm}\label{LEH.sec.2.thm.1}
Consider an integer $n$ satisfying $2\leq{n}\leq{d}$ and assume that $V$ is twice continuously differentiable at the point $a$ of $\mathbb{R}^n$ whose first $n$ coordinates are equal to $1/\sqrt{n}$. If at that point,
\begin{equation}\label{LEH.sec.2.thm.1.eq.0.1}
\frac{\partial^2}{\partial{a_1^2}}\frac{V}{\|a\|}-\sqrt{n}\frac{\partial}{\partial{a_1}}\frac{V}{\|a\|}-\frac{\partial^2}{\partial{a_1}\partial{a_2}}\frac{V}{\|a\|}
\end{equation}
is negative and, when $n<d$,
\begin{equation}\label{LEH.sec.2.thm.1.eq.0.2}
\frac{\partial^2}{\partial{a_d^2}}\frac{V}{\|a\|}
\end{equation}
is also negative, then $V$ has a strict local maximum on $\mathbb{S}^{d-1}\cap[0,+\infty[^d$ at $a$. Similarly, if (\ref{LEH.sec.2.thm.1.eq.0.1}) is positive at $a$ and, when $n<d$, (\ref{LEH.sec.2.thm.1.eq.0.2}) is positive at $a$ as well, then $V$ admits a strict local minimum on $\mathbb{S}^{d-1}\cap[0,+\infty[^d$ at that point.
\end{thm}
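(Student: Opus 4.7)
The plan is to apply the second-order sufficiency conditions of the constrained Lagrange multipliers theorem to the function $V$, which on the feasible set $\mathbb{S}^{d-1}\cap[0,+\infty[^d$ coincides with $V/\|a\|$. The feasible set is a manifold with corners defined by the sphere equation $\|a\|^2=1$ together with the inequalities $a_j\geq 0$; at the point $a$ under consideration, the active inequalities are exactly those indexed by $j>n$.

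First I would verify the first-order (KKT) conditions. The symmetry of $V$ under permutations of the first $n$ coordinates forces $\partial(V/\|a\|)/\partial a_i$ to take a common value $c$ for $i\leq n$, and Lemma \ref{LEH.sec.1.lem.1} gives $\partial(V/\|a\|)/\partial a_j=0$ for $j>n$. Hence $\nabla(V/\|a\|)(a)$ is parallel to $\nabla\|a\|^2(a)$, so the KKT conditions are satisfied with Lagrange multiplier $\lambda=c\sqrt{n}/2$ attached to the sphere constraint and with vanishing multipliers on the active inequalities.

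Next, I would extract the block structure of the Hessian of $V/\|a\|$ at $a$ from the same symmetries together with Lemmas \ref{LEH.sec.1.lem.2} and \ref{LEH.sec.1.lem.3}. The block indexed by $\{1,\dots,n\}$ has a common diagonal value $\alpha_1$ and a common off-diagonal value $\alpha_2$; the cross-block between $\{1,\dots,n\}$ and $\{n+1,\dots,d\}$ vanishes by Lemma \ref{LEH.sec.1.lem.2}; and the block indexed by $\{n+1,\dots,d\}$ has vanishing off-diagonal entries (again by Lemma \ref{LEH.sec.1.lem.2}) together with a common diagonal entry $\gamma$ supplied by Lemma \ref{LEH.sec.1.lem.3}. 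Because the cross-block vanishes, the quadratic form associated with the Lagrangian Hessian $\nabla^2(V/\|a\|)-2\lambda I$ decouples along the two coordinate blocks.

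The final step is to verify that this Lagrangian Hessian is negative definite on the critical cone $\{h:a\cdot h=0,\ h_j\geq 0\ \text{for}\ j>n\}$. By the decoupling just established, this reduces to two independent tests. In the first-block direction, the constraint $\sum_{i\leq n}h_i=0$ selects the eigenspace on which the restricted eigenvalue is $\alpha_1-\alpha_2-2\lambda=\alpha_1-\alpha_2-c\sqrt{n}$, which is precisely the quantity~(\ref{LEH.sec.2.thm.1.eq.0.1}). In the second-block direction, the form reduces to $(\gamma-2\lambda)\|h\|^2$, whose sign is controlled by~(\ref{LEH.sec.2.thm.1.eq.0.2}) together with the sign of $\lambda$. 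When both restricted eigenvalues have the appropriate sign, the test yields the strict extremum, with reversed signs yielding a strict local minimum. I expect the main technical obstacle to be the corner nature of the feasible set: since the tangent cone at $a$ is genuinely a cone rather than a subspace, the one-sided directions indexed by $j>n$ must be handled independently of the sphere-tangential directions, which is made possible precisely by the vanishing of the cross-block of the Hessian.
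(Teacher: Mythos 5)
Your overall strategy coincides with the paper's: form the Lagrangian $L_\lambda=V/\|a\|+\lambda(\|a\|^2-1)$, use the permutation symmetry of the first $n$ coordinates together with Lemmas \ref{LEH.sec.1.lem.1}, \ref{LEH.sec.1.lem.2} and \ref{LEH.sec.1.lem.3} to see that $a$ is a critical point and that the Hessian splits into the two coordinate blocks, and invoke the second-order sufficiency conditions. One remark on the corner issue you raise: the paper does not work with the critical cone of the inequality-constrained problem. It checks definiteness of the Lagrangian Hessian on the full tangent hyperplane $\{x:\sum_{i\le n}x_i=0\}$ of the sphere (the equality-constrained condition, Proposition 3.2.1 of the cited reference), which gives a strict local extremum on a spherical neighbourhood of $a$ and a fortiori on $\mathbb{S}^{d-1}\cap[0,+\infty[^d$; your cone is contained in that hyperplane, so the one-sidedness you worry about is not an obstacle.

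The genuine problem is your final step. You compute the diagonal entries of the Lagrangian Hessian in the block $j>n$ as $\gamma-2\lambda$, which is indeed what the computation gives, since $\partial^2\bigl(\lambda\|a\|^2\bigr)/\partial a_j^2=2\lambda$ for \emph{every} $j$ and not only for $j\le n$. But you then conclude only that the sign of $(\gamma-2\lambda)\|h\|^2$ is ``controlled by (\ref{LEH.sec.2.thm.1.eq.0.2}) together with the sign of $\lambda$.'' That does not prove the stated theorem: the hypotheses fix the sign of (\ref{LEH.sec.2.thm.1.eq.0.2}) $=\gamma$ alone and say nothing about $\lambda=\frac{\sqrt{n}}{2}\frac{\partial}{\partial a_1}\frac{V}{\|a\|}$, a quantity that is in general nonzero and of varying sign as $t$ varies. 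Your argument therefore only yields the conclusion when $\lambda$ happens to have the favourable sign, and you offer no reason why it does. Be aware that this is precisely where your computation departs from the paper's: the paper's proof states that $\frac{\partial^2 L_\lambda}{\partial a_j^2}=\frac{\partial^2}{\partial a_d^2}\frac{V}{\|a\|}$ for $n<j\le d$, with no $2\lambda$ contribution, while keeping the $-\sqrt{n}\,\frac{\partial}{\partial a_1}\frac{V}{\|a\|}$ contribution in the block $j\le n$. Since the constraint Hessian contributes $2\lambda$ to every diagonal entry, these two computations have to be reconciled before the theorem, with a hypothesis on the sign of (\ref{LEH.sec.2.thm.1.eq.0.2}) alone, can be regarded as established; as written, your proposal leaves exactly this gap open rather than closing it.
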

\begin{proof}
Consider a number $\lambda$ and denote
$$
L_\lambda=\frac{V}{\|a\|}+\lambda\!\left(\|a\|^2-1\right)\!.
$$

Throughout the proof $L_\lambda$ is treated as a function of $a$. Recall that $a$ is a critical point of $L_\lambda$ when, for all integers $j$ such that $1\leq{j}\leq{d}$,
\begin{equation}\label{LEH.sec.2.thm.1.eq.1}
\frac{\partial{L_\lambda}}{\partial{a_j}}=0\mbox{.}
\end{equation}

From now on, $a$ denotes the point contained in $]0,+\infty[^n$ whose first $n$ coordinates are equal to $1/\sqrt{n}$ and all the (first or second order) partial derivatives are taken at that point. Pick $\lambda$ such that
$$
\lambda=-\frac{\sqrt{n}}{2}\frac{\partial}{\partial{a_1}}\frac{V}{\|a\|}\mbox{.}
$$

Note that this particular value of $\lambda$ satisfies (\ref{LEH.sec.2.thm.1.eq.1}) when $j$ is equal to $1$. By symmetry, the partial derivatives of $L_\lambda$ with respect to $a_j$ all coincide at $a$ when $1\leq{j}\leq{n}$. Hence (\ref{LEH.sec.2.thm.1.eq.1}) holds for these values of $j$. Recall that the last $d-n$ coordinates of $a$ are equal to $0$. Therefore at the point $a$,
$$
\frac{\partial{L_\lambda}}{\partial{a_j}}=\frac{\partial}{\partial{a_j}}\frac{V}{\|a\|}
$$ 
when $n<j\leq{d}$ and it immediately follows from Lemma \ref{LEH.sec.1.lem.1} that (\ref{LEH.sec.2.thm.1.eq.1}) also holds in that case. As a consequence, $a$ is a critical point of $L_\lambda$ and in turn, according to the second order sufficiency conditions of the Lagrange multipliers theorem (see for instance Proposition~3.2.1 in \cite{Bertsekas1999}), if at the point $a$,
\begin{equation}\label{LEH.sec.2.thm.1.eq.2}
\sum_{j=1}^d\sum_{k=1}^dx_jx_k\frac{\partial^2{L_\lambda}}{\partial{a_j}\partial{a_k}}<0
\end{equation}
for every non-zero point $x$ in $\mathbb{R}^d$ whose first $n$ coordinates sum to $0$, then $V$ has a strict local maximum on $\mathbb{S}^{d-1}\cap[0,+\infty[^d$ at that point.

Now observe that, by symmetry, at point $a$,
$$
\frac{\partial^2{L_\lambda}}{\partial{a_j^2}}=\frac{\partial^2}{\partial{a_1^2}}\frac{V}{\|a\|}-\sqrt{n}\frac{\partial}{\partial{a_1}}\frac{V}{\|a\|}
$$
when $1\leq{j}\leq{n}$,
$$
\frac{\partial^2{L_\lambda}}{\partial{a_j}\partial{a_k}}=\frac{\partial^2}{\partial{a_1}\partial{a_2}}\frac{V}{\|a\|}
$$
when $1\leq{j}<k\leq{n}$, and
$$
\frac{\partial^2{L_\lambda}}{\partial{a_j^2}}=\frac{\partial^2}{\partial{a_d^2}}\frac{V}{\|a\|}
$$
when $n<j\leq{d}$. Moreover, according to Lemma \ref{LEH.sec.1.lem.2}, all the other second order partial derivatives vanish. As a consequence, at that point,
\begin{multline*}
\sum_{j=1}^d\sum_{k=1}^dx_jx_k\frac{\partial^2{L_\lambda}}{\partial{a_j}\partial{a_k}}=\!\left[\frac{\partial^2}{\partial{a_1^2}}\frac{V}{\|a\|}-\sqrt{n}\frac{\partial}{\partial{a_1}}\frac{V}{\|a\|}-\frac{\partial^2}{\partial{a_1}\partial{a_2}}\frac{V}{\|a\|}\right]\!\sum_{i=1}^nx_i^2\\
+\frac{\partial^2}{\partial{a_1}\partial{a_2}}\frac{V}{\|a\|}\!\left[\sum_{i=1}^nx_i\right]^2\!+\frac{\partial^2}{\partial{a_d^2}}\frac{V}{\|a\|}\sum_{i=n+1}^dx_i^2\mbox{.}
\end{multline*}

Note that, if the first $n$ coordinates of $x$ sum to $0$, then the second term in the right-hand side of this equality vanishes. It follows that, if (\ref{LEH.sec.2.thm.1.eq.0.1}) is negative at $a$ and, when $n<d$, (\ref{LEH.sec.2.thm.1.eq.0.2}) is also negative at that point, then (\ref{LEH.sec.2.thm.1.eq.2}) holds for every non-zero point $x$ in $\mathbb{R}^d$ whose first $n$ coordinates sum to $0$. Hence, $V$ has a strict local maximum on $\mathbb{S}^{d-1}\cap[0,+\infty[^d$ at $a$, as desired.

Finally, observe that repeating this proof, but with the second order sufficiency conditions of the Lagrange multipliers theorem for local minima (instead of local maxima), one obtains 
the desired local minimality result.
\end{proof}

Using Corollaries \ref{LEH.sec.1.cor.1} and \ref{LEH.sec.1.cor.2}, expressions for (\ref{LEH.sec.2.thm.1.eq.0.1}) and (\ref{LEH.sec.2.thm.1.eq.0.2}) can be obtained as follows in the form of improper integrals.

\begin{lem}\label{LEH.sec.2.lem.1}
Consider an integer $n$ satisfying $4\leq{n}\leq{d}$. At the point $a$ of $\mathbb{R}^n$ whose first $n$ coordinates are all equal to $1/\sqrt{n}$,
\begin{multline}\label{LEH.sec.2.lem.1.eq.1}
\frac{\partial^2}{\partial{a_1^2}}\frac{V}{\|a\|}-\sqrt{n}\frac{\partial}{\partial{a_1}}\frac{V}{\|a\|}-\frac{\partial^2}{\partial{a_1}\partial{a_2}}\frac{V}{\|a\|}=\frac{1}{\pi}\int_{-\infty}^{+\infty}n\Biggl[2\frac{n}{u^2}\sin^2\!\left(\frac{u}{\sqrt{n}}\right)\\
-\frac{\sqrt{n}}{u}\cos\!\left(\frac{u}{\sqrt{n}}\right)\!\sin\!\left(\frac{u}{\sqrt{n}}\right)\!-1\Biggr]\!\Biggl(\frac{\sqrt{n}}{u}\sin\!\left(\frac{u}{\sqrt{n}}\right)\!\Biggr)^{\!\!n-2}\!\!\!\!\!\!\!\!\!\cos(2tu)du
\end{multline}
and, if $n<d$, then
\begin{equation}\label{LEH.sec.2.lem.1.eq.2}
\frac{\partial^2}{\partial{a_d^2}}\frac{V}{\|a\|}=-\frac{1}{3\pi}\int_{-\infty}^{+\infty}u^2\!\Biggl(\frac{\sqrt{n}}{u}\sin\!\left(\frac{u}{\sqrt{n}}\right)\!\Biggr)^{\!\!n}\!\!\!\cos(2tu)du\mbox{.}
\end{equation}
\end{lem}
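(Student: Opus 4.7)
The plan is to express each of the two quantities using the improper-integral formulas for the partial derivatives of $V/\|a\|$ provided by Corollary \ref{LEH.sec.1.cor.1} and Corollary \ref{LEH.sec.1.cor.2}, and then to compute the integrands explicitly at the symmetric point $a$ whose first $n$ coordinates are $1/\sqrt{n}$ and whose remaining coordinates vanish. Since the point has $n\geq4$ non-zero coordinates, both corollaries apply. Throughout, I would let $\phi(x)=\sin(x)/x$, with the convention $\phi(0)=1$, and recall that $\phi'(0)=0$ and $\phi''(0)=-1/3$. At the evaluation point, all the factors corresponding to indices $i>n$ collapse to $\phi(0)=1$, so
$$
\prod_{i=1}^d\frac{\sin(a_iu)}{a_iu}=\phi\!\left(\frac{u}{\sqrt{n}}\right)^{\!\!n}=\left(\frac{\sqrt{n}}{u}\sin\!\left(\frac{u}{\sqrt{n}}\right)\!\right)^{\!\!n}\!.
$$

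The second identity (\ref{LEH.sec.2.lem.1.eq.2}) is the easier one and I would handle it first. Since only the factor $\phi(a_du)$ depends on $a_d$, differentiating the product twice with respect to $a_d$ gives $u^2\phi''(a_du)\prod_{i\neq d}\phi(a_iu)$, and evaluating at $a_d=0$ substitutes $\phi''(0)=-1/3$. The remaining product equals $\phi(u/\sqrt{n})^n$, which yields the claimed integrand $-\frac{u^2}{3}(\sqrt{n}\sin(u/\sqrt{n})/u)^n$.

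For the first identity (\ref{LEH.sec.2.lem.1.eq.1}), I would differentiate the product and evaluate at the symmetric point to get, with $x=u/\sqrt{n}$,
$$
\partial_{a_1}^2{\textstyle\prod_i}\phi(a_iu)=u^2\phi''(x)\phi(x)^{n-1}\mbox{,}\quad\partial_{a_1}{\textstyle\prod_i}\phi(a_iu)=u\phi'(x)\phi(x)^{n-1}\mbox{,}
$$
and $\partial_{a_1}\partial_{a_2}\prod_i\phi(a_iu)=u^2\phi'(x)^2\phi(x)^{n-2}$. Factoring out $\phi(x)^{n-2}$, the combination in (\ref{LEH.sec.2.lem.1.eq.1}) becomes $\phi(x)^{n-2}$ times
$$
\bigl[u^2\phi''(x)-\sqrt{n}\,u\phi'(x)\bigr]\phi(x)-u^2\phi'(x)^2\mbox{.}
$$
I would then substitute the explicit formulas $\phi'(x)=\cos(x)/x-\sin(x)/x^2$ and $\phi''(x)=-\sin(x)/x-2\cos(x)/x^2+2\sin(x)/x^3$, expand every term with $x$ replaced by $u/\sqrt{n}$, and collect like monomials in $\sin(u/\sqrt{n})$ and $\cos(u/\sqrt{n})$.

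The main obstacle is the bookkeeping in this algebraic simplification: a priori there are contributions proportional to $\sin^2$, $\cos^2$, $\sin\cos/u$, and $\sin^2/u^2$ with various coefficients, and the $\cos^2$-terms must cancel against a piece of the $\sin^2$-contribution via the identity $\sin^2+\cos^2=1$ in order to produce the constant term $-1$ inside the bracket of (\ref{LEH.sec.2.lem.1.eq.1}). Tracking the signs carefully, the $u^2\phi''(x)\phi(x)$, $-\sqrt{n}\,u\phi'(x)\phi(x)$, and $-u^2\phi'(x)^2$ contributions should combine to give exactly
$$
n\!\left[\frac{2n\sin^2(u/\sqrt{n})}{u^2}-\frac{\sqrt{n}\sin(u/\sqrt{n})\cos(u/\sqrt{n})}{u}-1\right]\!\mbox{,}
$$
which, multiplied by the factored $\phi(u/\sqrt{n})^{n-2}=(\sqrt{n}\sin(u/\sqrt{n})/u)^{n-2}$ and divided by $\pi$ under the integral against $\cos(2tu)$, produces the right-hand side of (\ref{LEH.sec.2.lem.1.eq.1}). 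No analytic input beyond the two corollaries is needed; the work is purely the trigonometric bookkeeping just described.
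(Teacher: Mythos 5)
Your proposal is correct and follows essentially the same route as the paper: both apply Corollaries \ref{LEH.sec.1.cor.1} and \ref{LEH.sec.1.cor.2}, substitute the explicit first and second derivatives of $\sin(x)/x$ (with $\phi''(0)=-1/3$ for the $a_d$-derivative), evaluate at the symmetric point, and combine the three integrands, where the $\cos^2+\sin^2$ cancellation indeed produces the constant $-1$ in the bracket. The only cosmetic difference is that you factor out $\phi(x)^{n-2}$ before expanding, whereas the paper writes out the three integrals (\ref{LEH.sec.2.lem.1.eq.3})--(\ref{LEH.sec.2.lem.1.eq.5}) separately and then combines them.
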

\begin{proof}
First recall that, when $x$ is non-zero,
$$
\frac{\partial}{\partial{x}}\frac{\sin(x)}{x}=\frac{\cos(x)}{x}-\frac{\sin(x)}{x^2}\mbox{.}
$$

Moreover, this partial derivative vanishes when $x$ is equal to $0$. Therefore, according to Corollary \ref{LEH.sec.1.cor.1}, when $a$ belongs to $]0,+\infty[^n$,
$$
\displaystyle\frac{\partial}{\partial{a_1}}\frac{V}{\|a\|}=\frac{1}{\pi}\int_{-\infty}^{+\infty}u\Biggl[\frac{\cos(a_1u)}{a_1u}-\frac{\sin(a_1u)}{a_1^2u^2}\Biggr]\!\!\left(\prod_{i=2}^n\frac{\sin(a_iu)}{a_iu}\right)\!\cos(2tu)du\mbox{.}
$$

Hence, at the point $a$ of $\mathbb{R}^n$ whose first $n$ coordinates are $1/\sqrt{n}$,
\begin{multline}\label{LEH.sec.2.lem.1.eq.3}
\displaystyle\frac{\partial}{\partial{a_1}}\frac{V}{\|a\|}=\frac{1}{\pi}\int_{-\infty}^{+\infty}u\Biggl[\frac{\sqrt{n}}{u}\cos\!\left(\frac{u}{\sqrt{n}}\right)\!\\
\hfill-\frac{n}{u^2}\sin\!\left(\frac{u}{\sqrt{n}}\right)\!\Biggr]\!\Biggl(\frac{\sqrt{n}}{u}\sin\!\left(\frac{u}{\sqrt{n}}\right)\!\Biggr)^{\!\!n-1}\!\!\!\!\!\!\!\!\!\cos(2tu)du\mbox{.}
\end{multline}

Further recall that, when $x$ is not equal to $0$
$$
\frac{\partial^2}{\partial{x^2}}\frac{\sin(x)}{x}=-\frac{\sin(x)}{x}-\frac{2\cos(x)}{x^2}+\frac{2\sin(x)}{x^3}
$$
and that this second order partial derivative is equal to $-1/3$ when $x$ is equal to $0$. Hence, by Corollary \ref{LEH.sec.1.cor.2}, when $a\in]0,+\infty[^n$,
\begin{multline*}
\displaystyle\frac{\partial^2}{\partial{a_1^2}}\frac{V}{\|a\|}=\frac{1}{\pi}\int_{-\infty}^{+\infty}u^2\Biggl[-\frac{\sin(a_1u)}{a_1u}-\frac{2\cos(a_1u)}{a_1^2u^2}\\
\hfill+\frac{2\sin(a_1u)}{a_1^3u^3}\Biggr]\!\!\left(\prod_{i=2}^n\frac{\sin(a_iu)}{a_iu}\right)\!\cos(2tu)du
\end{multline*}
and
\begin{multline*}
\displaystyle\frac{\partial^2}{\partial{a_1}\partial{a_2}}\frac{V}{\|a\|}=\frac{1}{\pi}\int_{-\infty}^{+\infty}u^2\Biggl[\frac{\cos(a_1u)\cos(a_2u)}{a_1a_2u^2}-\frac{\cos(a_1u)\sin(a_2u)}{a_1a_2^2u^3}\\
\hfill-\frac{\sin(a_1u)\cos(a_2u)}{a_1^2a_2u^3}+\frac{\sin(a_1u)\sin(a_2u)}{a_1^2a_2^2u^4}\Biggr]\!\!\left(\prod_{i=3}^n\frac{\sin(a_iu)}{a_iu}\right)\!\cos(2tu)du\mbox{.}
\end{multline*}

Therefore, at the point $a$ of $\mathbb{R}^n$ whose first $n$ coordinates are $1/\sqrt{n}$,
\begin{multline}\label{LEH.sec.2.lem.1.eq.4}
\displaystyle\frac{\partial^2}{\partial{a_1^2}}\frac{V}{\|a\|}=\frac{1}{\pi}\int_{-\infty}^{+\infty}u^2\Biggl[-\frac{n}{u^2}\sin^2\!\left(\frac{u}{\sqrt{n}}\right)\!-\frac{2n\sqrt{n}}{u^3}\cos\!\left(\frac{u}{\sqrt{n}}\right)\!\sin\!\left(\frac{u}{\sqrt{n}}\right)\\
\hfill+\frac{2n^2}{u^4}\sin^2\!\left(\frac{u}{\sqrt{n}}\right)\!\Biggr]\!\Biggl(\frac{\sqrt{n}}{u}\sin\!\left(\frac{u}{\sqrt{n}}\right)\!\Biggr)^{\!\!n-2}\!\!\!\!\!\!\!\!\!\cos(2tu)du
\end{multline}
and
\begin{multline}\label{LEH.sec.2.lem.1.eq.5}
\displaystyle\frac{\partial^2}{\partial{a_1}\partial{a_2}}\frac{V}{\|a\|}=\frac{1}{\pi}\int_{-\infty}^{+\infty}u^2\Biggl[\frac{n}{u^2}\cos^2\!\left(\frac{u}{\sqrt{n}}\right)\!-\frac{2n\sqrt{n}}{u^3}\cos\!\left(\frac{u}{\sqrt{n}}\right)\!\sin\!\left(\frac{u}{\sqrt{n}}\right)\\
\hfill+\frac{n^2}{u^4}\sin^2\!\left(\frac{u}{\sqrt{n}}\right)\!\Biggr]\!\Biggl(\frac{\sqrt{n}}{u}\sin\!\left(\frac{u}{\sqrt{n}}\right)\!\Biggr)^{\!\!n-2}\!\!\!\!\!\!\!\!\!\cos(2tu)du\mbox{.}
\end{multline}

Combining (\ref{LEH.sec.2.lem.1.eq.3}), (\ref{LEH.sec.2.lem.1.eq.4}), and (\ref{LEH.sec.2.lem.1.eq.5}) yields (\ref{LEH.sec.2.lem.1.eq.1}). Finally, assume that $n<d$. As the second order derivative of $\sin(x)/x$ is equal to $-1/3$ when $x$ is equal to $0$, at the point $a$ of $\mathbb{R}^n$ whose first $n$ coordinates are $1/\sqrt{n}$,
$$
\frac{\partial^2}{\partial{a_d^2}}\prod_{i=1}^d\frac{\sin(a_iu)}{a_iu}=-\frac{u^2}{3}\!\Biggl(\frac{\sqrt{n}}{u}\sin\!\left(\frac{u}{\sqrt{n}}\right)\!\Biggr)^{\!\!n}
$$
and by Corollary \ref{LEH.sec.1.cor.2}, (\ref{LEH.sec.2.lem.1.eq.2}) holds at that point.
\end{proof}

In order to estimate (\ref{LEH.sec.2.lem.1.eq.1}) when $t$ is close to $0$, the following three technical propositions will be needed. A proof of each is provided for completeness.

\begin{prop}\label{LEH.sec.2.prop.1}
The quantity
\begin{equation}\label{LEH.sec.2.prop.1.eq.1}
\frac{1-\cos(2s)}{s^2}-\frac{\sin(2s)}{2s}-1
\end{equation}
is negative when $s$ is positive.
\end{prop}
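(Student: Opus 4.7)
The plan is to clear denominators by multiplying the quantity (\ref{LEH.sec.2.prop.1.eq.1}) by the positive number $s^2$ and using the identities $1-\cos(2s) = 2\sin^2(s)$ and $\sin(2s) = 2\sin(s)\cos(s)$, so that it suffices to prove
$$
g(s) := 2\sin^2(s) - s\sin(s)\cos(s) - s^2 < 0 \quad\text{for all } s>0.
$$
I would split the positive half-line into two overlapping pieces: $(0,\pi]$, handled by a second-derivative argument, and $[\pi,+\infty)$, handled by a crude magnitude bound.

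On the interval $(0,\pi]$, I would check directly that $g(0)=0$ and $g'(0)=0$, and then compute
$$
g''(s)=2\cos(2s)+2s\sin(2s)-2=-4\sin^2(s)+4s\sin(s)\cos(s)=4\sin(s)\bigl(s\cos(s)-\sin(s)\bigr).
$$
Setting $\varphi(s)=s\cos(s)-\sin(s)$ gives $\varphi(0)=0$ and $\varphi'(s)=-s\sin(s)$, so $\varphi$ is strictly decreasing on $(0,\pi)$ and hence negative there. Since $\sin(s)>0$ on the same interval, one concludes $g''(s)<0$ on $(0,\pi)$, which combined with $g'(0)=0$ gives $g'(s)<0$ on $(0,\pi)$, and finally $g(s)<0$ on $(0,\pi]$ because $g(0)=0$.

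On the interval $[\pi,+\infty)$, I would use only the trivial bounds $|\sin^2(s)|\leq 1$ and $|\sin(s)\cos(s)|=|\sin(2s)|/2\leq 1/2$ to get
$$
g(s)\leq 2+\frac{s}{2}-s^2.
$$
The right-hand side is a concave-down quadratic in $s$ that is strictly negative at $s=\pi$ (since $\pi^2 > 2+\pi/2$) and decreasing in $s$ for $s\geq 1/4$, hence negative on $[\pi,+\infty)$. The two ranges overlap at $s=\pi$, so together they cover $(0,+\infty)$.

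No single step looks like a serious obstacle here; the only mildly delicate point is recognising the clean factorisation $g''(s)=4\sin(s)(s\cos(s)-\sin(s))$, which is what makes the sign analysis on $(0,\pi)$ reduce to two elementary monotonicity facts rather than an awkward case analysis. Everything else is a routine polynomial comparison.
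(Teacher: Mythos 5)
Your proof is correct, but it takes a genuinely different route from the paper. After the same reduction (multiplying by $s^2$ and using the double-angle identities is implicit in both), the paper splits at $s=2$: for $s>2$ it uses the crude bound $\frac{1-\cos(2s)}{s^2}-\frac{\sin(2s)}{2s}\leq\frac{2}{s^2}+\frac{1}{2s}<1$, and for $s\leq2$ it expands $\cos(2s)$ and $\sin(2s)$ into power series and shows the resulting alternating series is negative by pairing consecutive terms and bounding their ratio. You instead split at $s=\pi$, handle the bounded piece by a second-derivative argument hinging on the factorisation $g''(s)=4\sin(s)\bigl(s\cos(s)-\sin(s)\bigr)$ together with $g(0)=g'(0)=0$, and dispatch $[\pi,+\infty)$ with the trivial bound $g(s)\leq 2+\tfrac{s}{2}-s^2$. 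I checked your computations ($g'(s)=\tfrac{3}{2}\sin(2s)-s\cos(2s)-2s$, $g''(s)=2\cos(2s)+2s\sin(2s)-2$, the sign of $\varphi(s)=s\cos(s)-\sin(s)$ via $\varphi'(s)=-s\sin(s)$, and $\pi^2>2+\pi/2$) and they are all sound; note only that $g''(\pi)=0$, which is harmless since $g''<0$ on the open interval $(0,\pi)$ already forces $g'<0$ and hence $g<0$ on all of $(0,\pi]$. Your approach is arguably more self-contained, replacing the series manipulation with two elementary monotonicity facts; the paper's power-series technique has the advantage that essentially the same alternating-sum pairing is reused verbatim in the proof of the subsequent Proposition \ref{LEH.sec.2.prop.2}, so the two proofs there share a template.
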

\begin{proof}
Assume that $s$ is positive and observe that
$$
\frac{1-\cos(2s)}{s^2}-\frac{\sin(2s)}{2s}\leq\frac{2}{s^2}+\frac{1}{2s}\mbox{.}
$$
Hence, the result is immediate when $s$ is greater than $2$. Assume that $s$ is at most $2$. Expanding $\cos(2s)$ and $\sin(2s)$ into their power series yields
$$
\frac{1-\cos(2s)}{s^2}-\frac{\sin(2s)}{2s}-1=-\sum_{i=2}^{+\infty}\frac{(-1)^i(2i-2)}{(2i+2)!}(2s)^{2i}\mbox{.}
$$

Note that in the right-hand side, any two consecutive terms of the sum have opposite signs. It is therefore sufficient to show that
$$
\frac{2i-2}{(2i+2)!}(2s)^{2i}-\frac{2i}{(2i+4)!}(2s)^{2i+2}
$$
is positive when $i$ is an even positive integer. Observe that
$$
\frac{2i-2}{(2i+2)!}(2s)^{2i}-\frac{2i}{(2i+4)!}(2s)^{2i+2}=2\frac{(2s)^{2i}}{(2i+2)!}(iR_i-1)\mbox{,}
$$
where
$$
R_i=1-\frac{4s^2}{(2i+3)(2i+4)}\mbox{.}
$$

As $R_i$ is an increasing function of $i$, one obtains that it is at least $1-s^2/14$ for every even positive integer $i$. In turn, as $s$ is at most $2$ and $i$ at least $2$, this shows that $iR_i-1$ is at least $1-4/7$, which is also positive.
\end{proof}

\begin{prop}\label{LEH.sec.2.prop.2}
If $n$ is an integer greater than  $5$, then
\begin{equation}\label{LEH.sec.2.prop.2.eq.1}
\frac{1-\cos(2s)}{s^n}-\frac{\sin(2s)}{2s^{n-1}}-\frac{1}{s^{n-2}}
\end{equation}
is a strictly increasing function of $s$ on $]0,+\infty[$ and if $n$ is equal to $5$, then it is a strictly increasing function of $s$ on $]4,+\infty[$.
\end{prop}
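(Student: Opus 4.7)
The plan is to show that the derivative with respect to $s$ of the expression (\ref{LEH.sec.2.prop.2.eq.1}) is strictly positive on the relevant interval. After a direct differentiation and multiplication by $s^{n+1}$, the task reduces to proving $\phi_n(s)>0$, where
$$
\phi_n(s)=\frac{n+3}{2}s\sin(2s)-n(1-\cos(2s))+(n-2)s^2-s^2\cos(2s)\mbox{.}
$$
A direct subtraction shows that $\phi_n-\phi_m$ equals $(n-m)h(s)$, where
$$
h(s)=\frac{1}{2}s\sin(2s)+s^2-(1-\cos(2s))\mbox{,}
$$
so in the case $n\geq6$ it suffices to establish the two positivity statements $h(s)>0$ and $\phi_6(s)>0$ on $]0,+\infty[$.

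Positivity of $h$ will be obtained by a Taylor-based argument at $0$. One has $h(0)=h'(0)=h''(0)=0$ and $h'''(s)=2[\sin(2s)-2s\cos(2s)]=2\int_0^{2s}t\sin t\,dt$, which is positive on $]0,\pi/2]$ because the integrand is non-negative on $[0,\pi]$; hence $h>0$ on $]0,\pi/2]$ by three successive integrations. For $s\geq\pi/2$, the bound $\left|\tfrac{1}{2}s\sin(2s)+\cos(2s)\right|\leq\sqrt{s^2/4+1}$ gives $h(s)\geq s^2-1-\sqrt{s^2/4+1}$, which is strictly increasing in $s$ and positive at $\pi/2$.

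The delicate step is $\phi_6>0$, because the Taylor expansion of $\phi_6$ at $s=0$ starts only at order $s^8$. Writing $\phi_6(s)=\sum_{k\geq4}c_ks^{2k}$, the coefficients $c_k$ strictly alternate in sign with $c_4=4/315>0$, and the ratio $|c_{k+1}/c_k|=4(k-1)/[(k-3)(2k+1)(2k+2)]$ attains its maximum $2/15$ at $k=4$. Hence for $s^2<15/2$, Leibniz's criterion yields
$$
\phi_6(s)>c_4s^8+c_5s^{10}=\frac{4s^8}{315}\!\left(1-\frac{2s^2}{15}\right)\!>0\mbox{.}
$$
For larger $s$, rewriting $\phi_6(s)=\frac{9}{2}s\sin(2s)+(6-s^2)\cos(2s)+4s^2-6$ and using $|\sin(2s)|,|\cos(2s)|\leq1$ together with the sign of $6-s^2$ gives $\phi_6(s)\geq3s^2-9s/2$ for $s^2\geq6$, which is positive on $[\sqrt{6},+\infty[$. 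Since $\sqrt{6}<\sqrt{15/2}$, the two ranges overlap and cover $]0,+\infty[$.

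The case $n=5$ requires no reduction: a direct trivial bound on $\phi_5$ yields $\phi_5(s)\geq2s^2-4s-10$, positive for $s>1+\sqrt{6}$ and in particular on $]4,+\infty[$. The main obstacle in this plan lies in the $n=6$ case: the Leibniz estimate and the direct trigonometric bound must jointly cover $]0,+\infty[$, and the specific constants make this work only narrowly.
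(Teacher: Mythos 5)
Your proof is correct and follows essentially the same route as the paper: your $\phi_n(s)$ is exactly $s^2$ times the paper's $An+B$ (and your $h(s)$ is $s^2A$), so the reduction via linearity in $n$ to the positivity of $h$, of $\phi_6$ on $]0,+\infty[$, and of $\phi_5$ for large $s$ mirrors the paper's reduction to $A>0$, $6A+B>0$, and $5A+B>0$, with the same split into a power-series/alternating-sign estimate for small $s$ and crude trigonometric bounds for large $s$, and the identical bound $2s^2-4s-10$ in the $n=5$ case. The only (minor) divergence is your proof that $h>0$, via three integrations of $h'''$ plus a Cauchy--Schwarz amplitude bound, where the paper instead invokes its Proposition on $A$ proved by pairing consecutive power-series terms.
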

\begin{proof}
Observe that (\ref{LEH.sec.2.prop.2.eq.1}) is a differentiable function of $s$ on $]0,+\infty[$ and that its derivative with respect to $s$ can be expressed as
$$
\frac{d}{ds}\!\left[\frac{1-\cos(2s)}{s^n}-\frac{\sin(2s)}{2s^{n-1}}-\frac{1}{s^{n-2}}\right]\!=\frac{An+B}{s^{n-1}}
$$
where
$$
A=\frac{\cos(2s)-1}{s^2}+\frac{\sin(2s)}{2s}+1
$$
and
$$
B=\frac{3\sin(2s)}{2s}-\cos(2s)-2\mbox{.}
$$

According to Proposition \ref{LEH.sec.2.prop.1}, $A$ is positive. Therefore, it suffices to show that $6A+B$ is always positive and that $5A+B$ is positive when $s$ is greater than $4$. The latter is immediate. Indeed, observe that
$$
\begin{array}{rcl}
5A+B & \!\!\!\!\geq\!\!\!\! & \displaystyle5\!\left(-\frac{2}{s^2}-\frac{1}{2s}+1\right)\!-\frac{3}{2s}-3\mbox{,}\\[\bigskipamount]
& \!\!\!\!=\!\!\!\! & \displaystyle2-\frac{10}{s^2}-\frac{4}{s}\mbox{.}
\end{array}
$$

It remains to show that $6A+B$ is positive. Observe that
$$
\begin{array}{rcl}
6A+B & \!\!\!\!=\!\!\!\! & \displaystyle\!\left(\frac{6}{s^2}-1\right)\!\cos(2s)-\frac{6}{s^2}+\frac{9\sin(2s)}{2s}+4\\[\bigskipamount]
& \!\!\!\!\geq\!\!\!\! & \displaystyle-\!\left|\frac{6}{s^2}-1\right|\!-\frac{6}{s^2}-\frac{9}{2s}+4\mbox{.}
\end{array}
$$

As a consequence, $6A+B$ is positive when $s$ is at least $\sqrt{6}$. It is assumed in the remainder of the proof that $s$ is less than $\sqrt{6}$. Expanding $\sin(2s)$ and $\cos(2s)$ into their power series, one obtains
$$
6A+B=4\sum_{i=2}^{+\infty}\frac{(-1)^i(i-1)i}{(2i+4)!}(2s)^{2i+2}\mbox{.}
$$

Note that the terms in that sum have alternating signs. Rearranging the sum so that each positive term is summed with the next one yields
\begin{equation}\label{LEH.sec.2.prop.2.eq.2}
6A+B=8\sum_{i=1}^{+\infty}\frac{i(2s)^{4i+2}}{(4i+4)!}\!\left(R_i-2\right)\!\mbox{,}
\end{equation}
where
$$
R_i=(2i+1)\!\left(1-\frac{4s^2}{(4i+5)(4i+6)}\right)\!\!\mbox{.}
$$

Note that $R_i$ is an increasing function of $i$. Hence, as $i$ is a positive integer,
$$
R_i\geq3\!\left(1-\frac{4s^2}{90}\right)\!\!\mbox{.}
$$

Now recall that $s$ is less than $\sqrt{6}$. It follows that $R_i$ is greater than $22/10$ for every positive integer $i$ and, by (\ref{LEH.sec.2.prop.2.eq.2}), that $6A+B$ is positive.
\end{proof}

\begin{prop}\label{LEH.sec.2.prop.3}
$\displaystyle\int_0^{2\pi}\!\!\left(2\frac{\sin^5(s)}{s^5}-\cos(s)\frac{\sin^4(s)}{s^4}-\frac{\sin^3(s)}{s^3}\right)\!ds<0$.
\end{prop}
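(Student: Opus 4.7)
The plan is to reorganise the integrand through integration by parts so that it collapses to a combination of $\sin^n(s)/s^n$ terms whose improper integrals over $[0,+\infty[$ are classical, and then to control the contribution of the tail $s\geq 2\pi$ by a crude pointwise bound. The key algebraic observation is that $\sin^4(s)\cos(s)=\frac{1}{5}\frac{d}{ds}\sin^5(s)$, which gives
$$
\cos(s)\frac{\sin^4(s)}{s^4}=\frac{1}{5}\frac{d}{ds}\!\left(\frac{\sin^5(s)}{s^4}\right)+\frac{4}{5}\frac{\sin^5(s)}{s^5}\mbox{,}
$$
and therefore rewrites the integrand as $(6/5)\sin^5(s)/s^5-\sin^3(s)/s^3-(1/5)\frac{d}{ds}(\sin^5(s)/s^4)$. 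The total derivative contributes nothing to the integral over $[0,2\pi]$ because $\sin^5(s)/s^4$ extends continuously by $0$ at $s=0$ and vanishes at $s=2\pi$.

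The problem therefore reduces to showing $(6/5)\int_0^{2\pi}\sin^5(s)/s^5\,ds-\int_0^{2\pi}\sin^3(s)/s^3\,ds<0$. I would extend both integrals to $[0,+\infty[$ and use the classical values $\int_0^{+\infty}(\sin s/s)^3\,ds=3\pi/8$ and $\int_0^{+\infty}(\sin s/s)^5\,ds=115\pi/384$, both of which in fact drop out of comparing Theorem~\ref{LEH.sec.1.thm.1} and Theorem~\ref{LEH.sec.1.thm.2} for the central section of $[0,1]^n$ orthogonal to its main diagonal, at $n=3$ and $n=5$. A direct calculation then yields the exact identity
$$
\frac{6}{5}\cdot\frac{115\pi}{384}-\frac{3\pi}{8}=\frac{23\pi}{64}-\frac{24\pi}{64}=-\frac{\pi}{64}\mbox{.}
$$

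It then only remains to control the tail, which is immediate from $|\sin(s)|\leq 1$:
$$
\left|\frac{6}{5}\int_{2\pi}^{+\infty}\!\!\frac{\sin^5(s)}{s^5}\,ds-\int_{2\pi}^{+\infty}\!\!\frac{\sin^3(s)}{s^3}\,ds\right|\leq\frac{3}{160\pi^4}+\frac{1}{8\pi^2}<0.013\mbox{,}
$$
which is comfortably below $\pi/64\approx 0.049$. Subtracting this tail from $-\pi/64$ still leaves a strictly negative quantity, as required. The only step calling for any ingenuity is the initial integration by parts; once the exact value $-\pi/64$ is in hand, the tail bound relies only on $|\sin|\leq 1$ and leaves a margin of roughly $0.036$, so the sign is unambiguous and no finer estimate is needed.
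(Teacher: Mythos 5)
Your argument is correct, and it takes a genuinely different route from the paper's. The paper keeps the factor $\sin^3(s)$ intact: it writes the integrand as $f(s)\sin^3(s)$ with $f(s)=2\sin^2(s)/s^5-\cos(s)\sin(s)/s^4-1/s^3$, invokes Proposition~\ref{LEH.sec.2.prop.1} to get $f<0$ on $\interval{]0,2\pi[}$, bounds $\sin^3$ from below by $\sin^3(1)$ or by $0$ or $-1$ on suitable subintervals, and then evaluates the resulting integrals of $f$ exactly via the antiderivative $(2s^2+\cos(2s)-1)/(4s^4)$, checking numerically that the upper bound so obtained is negative. You instead absorb the $\cos(s)\sin^4(s)/s^4$ term by the exact derivative identity $\sin^4\cos=\tfrac15(\sin^5)'$, which collapses the integrand to $\tfrac{6}{5}\sin^5(s)/s^5-\sin^3(s)/s^3$ plus a total derivative vanishing at both endpoints, and then the full-line integrals give the exact value $-\pi/64$ with a tail over $[2\pi,+\infty[$ bounded crudely by $3/(160\pi^4)+1/(8\pi^2)<0.013<\pi/64$. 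All the steps check out: the identity, the vanishing of $\sin^5(s)/s^4$ at $0$ and $2\pi$, the values $3\pi/8$ and $115\pi/384$ (which, as you note, can be recovered by equating Theorems~\ref{LEH.sec.1.thm.1} and~\ref{LEH.sec.1.thm.2} for the central diagonal section at $n=3$ and $n=5$, so no external input is really needed), and the arithmetic $\tfrac{6}{5}\cdot\tfrac{115\pi}{384}-\tfrac{3\pi}{8}=-\tfrac{\pi}{64}$. What your approach buys is an exact leading term and a tail estimate requiring only $|\sin|\leq1$, with a comfortable margin; what it costs is the reliance on the closed-form sinc-power integrals, which the paper's proof avoids at the price of a more delicate, purely numerical final verification. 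Either proof is acceptable; yours is arguably the more transparent of the two.
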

\begin{proof}
Denote
$$
f(s)=2\frac{\sin^2(s)}{s^5}-\cos(s)\frac{\sin(s)}{s^4}-\frac{1}{s^3}\mbox{.}
$$

Recall that $2\sin^2(s)=1-\cos(2s)$ and $2\sin(s)\cos(s)=\sin(2s)$. Therefore, by Proposition \ref{LEH.sec.2.prop.1}, $f(s)$ is negative when $s$ belongs to $]0,2\pi[$. Hence,
$$
\int_0^{2\pi}f(s)\sin^3(s)ds<\sin^3(1)\!\left[\int_1^{\pi-1}f(s)ds-\int_\pi^{\pi+1}f(s)ds\right]\!-\int_{\pi+1}^{2\pi}f(s)ds\mbox{.}
$$

As in addition,
$$
f(s)=\frac{d}{ds}\frac{2s^2+\cos(2s)-1}{4s^4}
$$
one obtains the inequality
\begin{multline*}
\int_0^{2\pi}f(s)\sin^3(s)ds<\sin^3(1)\!\biggl[\frac{2(\pi-1)^2+\cos(2\pi-2)-1}{4(\pi-1)^4}-\frac{1+\cos(2)}{4}\\
\hfill+\frac{1}{2\pi^2}\biggr]\!-\frac{1}{8\pi^2}+\!\left(1-\sin^3(1)\right)\!\frac{2(\pi+1)^2+\cos(2\pi+2)-1}{4(\pi+1)^4}
\end{multline*}
whose right-hand side is negative.
\end{proof}

Theorem \ref{LEH.sec.0.thm.0.2} can now be established

\begin{proof}[Proof of Theorem \ref{LEH.sec.0.thm.0.2}]
Consider an integer $n$ satisfying $4\leq{n}\leq{d}$. By the symmetries of the hypercube, it is only required to show that, if $t$ is small enough, then $V$ has a local maximum at the point $a$ of $\mathbb{R}^n$ whose first $n$ coordinates are equal to $1/\sqrt{n}$. Observe that the quantities
\begin{multline}
Q_1=\int_{-\infty}^{+\infty}\!\biggl[2\frac{n}{u^2}\sin^2\!\left(\frac{u}{\sqrt{n}}\right)-\frac{\sqrt{n}}{u}\cos\!\left(\frac{u}{\sqrt{n}}\right)\!\sin\!\left(\frac{u}{\sqrt{n}}\right)\!\\
\hfill-1\biggr]\!\!\Biggl(\frac{\sqrt{n}}{u}\sin\!\left(\frac{u}{\sqrt{n}}\right)\!\Biggr)^{\!\!n-2}\!\!\!\!\!\!\!\!\!\cos(2tu)du\mbox{,}
\end{multline}
and
$$
Q_2=-\int_{-\infty}^{+\infty}u^2\!\Biggl(\frac{\sqrt{n}}{u}\sin\!\left(\frac{u}{\sqrt{n}}\right)\!\Biggr)^{\!\!n}\!\!\!\cos(2tu)du
$$
are continuous functions of $t$ on $\mathbb{R}$. Therefore, according to Theorem \ref{LEH.sec.2.thm.1} and Lemma \ref{LEH.sec.2.lem.1}, it suffices to show that both of them are negative when $t$ is equal to~$0$. Assume that $t$ is equal to $0$ and note that, under this assumption, the negativity of $Q_2$ is immediate when $n$ is even.

By the change of variables $s=u/\sqrt{n}$ and splitting the integral at $0$,
$$
Q_1=2\sqrt{n}\int_{0}^{+\infty}\!\left[2\frac{\sin^n(s)}{s^n}-\cos(s)\frac{\sin^{n-1}(s)}{s^{n-1}}-\frac{\sin^{n-2}(s)}{s^{n-2}}\right]\!ds
$$

Now observe that since
\begin{equation}\label{LEH.sec.2.thm.2.eq.1}
2\frac{\sin^2(s)}{s^n}-\cos(s)\frac{\sin(s)}{s^{n-1}}-\frac{1}{s^{n-2}}=\frac{1-\cos(2s)}{s^n}-\frac{\sin(2s)}{2s^{n-1}}-\frac{1}{s^{n-2}}\mbox{,}
\end{equation}
the negativity of $Q_1$ is a consequence of Proposition \ref{LEH.sec.2.prop.1} when $n$ is even. It is therefore assumed for the remainder of the proof that $n$ is odd.

Further splitting the integral at the integer multiples of $\pi$ yields
$$
Q_1=2\sqrt{n}\sum_{i=0}^{+\infty}I_i
$$
where for any non-negative integer $i$,
$$
I_i=\int_{i\pi}^{(i+1)\pi}\!\left[2\frac{\sin^n(s)}{s^n}-\cos(s)\frac{\sin^{n-1}(s)}{s^{n-1}}-\frac{\sin^{n-2}(s)}{s^{n-2}}\right]\!ds\mbox{.}
$$

Hence, in order to show that $Q_1$ is negative, it is sufficient to prove that the sum $I_i+I_{i+1}$ is negative for all even $i$. When $i$ is equal to $0$ and $n$ to $5$, this follows from Proposition \ref{LEH.sec.2.prop.3}. Now observe that as $n$ is odd,
\begin{multline*}
I_i+I_{i+1}=\int_{i\pi}^{(i+1)\pi}\!\Biggl[2\frac{\sin^2(s)}{s^n}-\cos(s)\frac{\sin(s)}{s^{n-1}}-\frac{1}{s^{n-2}}\Biggr]\!\sin^{n-2}(s)\\
-\Biggl[2\frac{\sin^2(s+\pi)}{(s+\pi)^n}-\cos(s+\pi)\frac{\sin(s+\pi)}{(s+\pi)^{n-1}}-\frac{1}{(s+\pi)^{n-2}}\Biggr]\!\sin^{n-2}(s)ds\mbox{.}
\end{multline*}

Hence, according to (\ref{LEH.sec.2.thm.2.eq.1}) and to Proposition \ref{LEH.sec.2.prop.2}, $I_i+I_{i+1}$ is negative when $n$ is equal to $5$ and $i$ is a positive even number. It is also negative when $n$ greater than $5$ for any non-negative even integer $i$. This shows that $Q_1$ is negative and it remains to show that $Q_2$ is also negative.

By the change of variables $s=u/\sqrt{n}$ in the expression of $Q_2$,
$$
Q_2=-2n^{3/2}\sum_{i=0}^{+\infty}J_i
$$
where, for any non-negative integer $i$,
$$
J_i=\int_{i\pi}^{(i+1)\pi}\frac{\sin^n(s)}{s^{n-2}}ds\mbox{.}
$$

In order to prove that $Q_2$ is negative, it is sufficient to show that the sum $J_i+J_{i+1}$ is positive when $i$ is even. Observe that
$$
J_i+J_{i+1}=\int_{i\pi}^{(i+1)\pi}\frac{\sin^n(s)}{s^{n-2}}-\frac{\sin^n(s)}{(s+\pi)^{n-2}}ds\mbox{.}
$$

If $i$ is even and $s$ belongs to $]i\pi,(i+1)\pi[$, then $\sin(s)$ is positive and, as an immediate consequence, $J_i+J_{i+1}$ is positive as well.
\end{proof}

\section{Local maxima away from the center of the hypercube}\label{LEH.sec.3}

Consider an integer $n$ satisfying $2\leq{n}\leq{d}$ and recall that $]0,+\infty[^n$ denotes the subset of $[0,+\infty[^d$ made up of the points whose first $n$ coordinates are positive and whose last $d-n$ coordinates are equal to $0$. In this section and the next, the local extremality of $V$ is investigated at the point $a$ of $\mathbb{S}^{d-1}\cap]0,+\infty[^n$ whose first $n$ coordinates coincide. The change of variables
\begin{equation}\label{LEH.sec.3.eq.1}
z=\frac{n}{2}-t\sqrt{n}
\end{equation}
is used throughout both section. 

The local extremality of $V$ when $t$ is large (and $z$ small) will be studied via Theorem \ref{LEH.sec.2.thm.1} as in Section \ref{LEH.sec.2}, except that (\ref{LEH.sec.2.thm.1.eq.0.1}) and (\ref{LEH.sec.2.thm.1.eq.0.2}) will be expressed as discrete sums instead of improper integrals.

\begin{lem}\label{LEH.sec.3.lem.1}
If $n$ is greater than, or equal to $4$, then at the point $a$ of $]0,+\infty[^n$ whose first $n$ coordinates are equal to $1/\sqrt{n}$,
$$
\frac{\partial^2}{\partial{a_1^2}}\frac{V}{\|a\|}-\sqrt{n}\frac{\partial}{\partial{a_1}}\frac{V}{\|a\|}-\frac{\partial^2}{\partial{a_1}\partial{a_2}}\frac{V}{\|a\|}=\sum_{i=0}^{\lfloor{z}\rfloor}\frac{(-1)^i\sqrt{n}}{(n-3)!}{n\choose{i}}(z-i)^{n-3}p_{i,n}(z)\mbox{.}
$$
\end{lem}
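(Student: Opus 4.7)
The plan is to apply Lemmas~\ref{LEH.sec.1.lem.1} and~\ref{LEH.sec.1.lem.2} directly, which express each of the three partial derivatives on the left-hand side as a sum over the vertices $v$ of $[0,1]^n$ satisfying $a\mathord{\cdot}v\leq b$ of the corresponding derivatives of $(-1)^{\sigma(v)}(b-a\mathord{\cdot}v)^{n-1}/[(n-1)!\pi(a)]$, and then to regroup the resulting combination level by level according to $\sigma(v)$.

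At the point $a$ whose first $n$ coordinates equal $1/\sqrt{n}$, one has $\|a\|=1$, $\pi(a)=n^{-n/2}$, $a\mathord{\cdot}v=\sigma(v)/\sqrt{n}$, and $b=z/\sqrt{n}$, so that $a\mathord{\cdot}v\leq b$ is equivalent to $\sigma(v)\leq\lfloor z\rfloor$. The sum therefore decomposes as an outer sum over $i=0,\ldots,\lfloor z\rfloor$ with sign $(-1)^i$ of an inner sum over the $\binom{n}{i}$ vertices $v$ with $\sigma(v)=i$.

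Next I would carry out the differentiation once and for all, using the product rule together with $\partial_j(b-a\mathord{\cdot}v)=1/2-v_j$ and $\partial_j(1/\pi(a))=-1/(a_j\pi(a))$ for $1\leq j\leq n$. Writing $f_v=(b-a\mathord{\cdot}v)^{n-1}/\pi(a)$, evaluating at $a$ and forming $\partial_1^2 f_v-\sqrt{n}\,\partial_1 f_v-\partial_1\partial_2 f_v$ should produce, per vertex, $n^{3/2}$ times a linear combination of $(1/2-v_1)(v_2-v_1)(z-i)^{n-3}$, $(2v_1-v_2-1/2)(z-i)^{n-2}$, and $2(z-i)^{n-1}$, with simple coefficients depending only on $n$. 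The subtraction of $\sqrt{n}\,\partial_1 f_v$ is precisely what is needed to kill the surviving pure $(1/2-v_1)$ contribution coming from $\partial_1^2 f_v$.

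Finally, the inner sum over vertices with $\sigma(v)=i$ reduces to the elementary identities $\sum v_1=\binom{n-1}{i-1}$ and $\sum v_1v_2=\binom{n-2}{i-2}$. Via Pascal's rule, these yield level sums $\binom{n}{i}i(n-i)/[n(n-1)]$, $-\binom{n}{i}(n/2-i)/n$, and $2\binom{n}{i}$ for the three coefficients above. Factoring $\sqrt{n}\,(n-1)(n-2)\binom{n}{i}(z-i)^{n-3}$ out of each level produces exactly the quadratic $p_{i,n}(z)$ of~(\ref{LEH.sec.0.thm.1.eq.-1}), and dividing by $(n-1)!$ converts $(n-1)(n-2)$ into $1/(n-3)!$, giving the claimed formula. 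The main obstacle is purely bookkeeping: the identity rests on the three separate level sums conspiring to form the three coefficients of that specific quadratic, which is presumably the reason $p_{i,n}$ is defined as it is.
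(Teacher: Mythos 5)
Your proposal is correct and follows essentially the same route as the paper: differentiate the per-vertex term $(b-a\mathord{\cdot}v)^{n-1}/\pi(a)$ via the product rule, evaluate at the point where $b-a\mathord{\cdot}v=(z-i)/\sqrt{n}$, group the vertices by the level $\sigma(v)=i$ using the counting identities for $\sum v_1$ and $\sum v_1v_2$, and recombine the three resulting level sums into $(n-1)(n-2)\binom{n}{i}p_{i,n}(z)/n$ times the common prefactor. The algebra you sketch (including the final conversion of $(n-1)(n-2)/(n-1)!$ into $1/(n-3)!$) checks out against the paper's computation.
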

\begin{proof}
Consider a vertex $v$ of $[0,1]^d$. At any point $a$ in $]0,+\infty[^n$,
\begin{equation}\label{LEH.sec.3.thm.1.eq.0}
\frac{\partial}{\partial{a_1}}\frac{(b-a\mathord{\cdot}v)^{n-1}}{\pi(a)}=\frac{(b-a\mathord{\cdot}v)^{n-2}}{\pi(a)}\Biggl[(n-1)\!\left(\frac{1}{2}-v_1\right)\!-\frac{b-a\mathord{\cdot}v}{a_1}\Biggr]\!\mbox{.}
\end{equation}

Differentiating again yields
\begin{multline}\label{LEH.sec.3.thm.1.eq.1}
\displaystyle\frac{\partial^2}{\partial{a_1^2}}\frac{(b-a\mathord{\cdot}v)^{n-1}}{\pi(a)}=\frac{(b-a\mathord{\cdot}v)^{n-3}}{\pi(a)}\Biggl[\frac{(n-1)(n-2)}{4}\\
\hfill-2(n-1)\!\left(\frac{1}{2}-v_1\right)\!\frac{b-a\mathord{\cdot}v}{a_1}+2\frac{(b-a\mathord{\cdot}v)^2}{a_1^2}\Biggr]
\end{multline}
and
\begin{multline}\label{LEH.sec.3.thm.1.eq.2}
\displaystyle\frac{\partial^2}{\partial{a_1}\partial{a_2}}\frac{(b-a\mathord{\cdot}v)^{n-1}}{\pi(a)}=\frac{(b-a\mathord{\cdot}v)^{n-3}}{\pi(a)}\Biggl[(n-1)(n-2)\!\left(\frac{1}{2}-v_1\right)\!\!\left(\frac{1}{2}-v_2\right)\\
\hfill-(n-1)\!\left(\frac{1}{2}-v_2\right)\!\frac{b-a\mathord{\cdot}v}{a_1}-(n-1)\!\left(\frac{1}{2}-v_1\right)\!\frac{b-a\mathord{\cdot}v}{a_2}+\frac{(b-a\mathord{\cdot}v)^2}{a_1a_2}\Biggr]\!\mbox{.}
\end{multline}

Now denote by $\mathcal{L}_i$ the set of the vertices $v$ of the hypercube $[0,1]^n$ whose coordinates sum to some integer $i$. Recall that
$$
|\mathcal{L}_i|={n\choose{i}}\!\mbox{.}
$$

Moreover, exactly
$$
n-1\choose{i-1}
$$
vertices $v$ in $\mathcal{L}_i$ satisfy $v_1=1$. In particular,
$$
\begin{array}{rcl}
\displaystyle\sum_{v\in\mathcal{L}_i}\!\left(\frac{1}{2}-v_1\right)\! & \!\!\!\!=\!\!\!\! & \displaystyle\frac{1}{2}{n\choose{i}}-{n-1\choose{i-1}}\!\mbox{,}\\[\bigskipamount]
& \!\!\!\!=\!\!\!\! & \displaystyle{n\choose{i}}\!\left(\frac{1}{2}-\frac{i}{n}\right)\!\!\mbox{.}\\
\end{array}
$$

Assume that $a$ is the point of $]0,+\infty[^n$ whose first $n$ coordinates are equal to $1/\sqrt{n}$. In that case, for any vertex $v$ in $\mathcal{L}_i$, (\ref{LEH.sec.3.eq.1}) yields
$$
b-a\mathord{\cdot}v=\frac{z-i}{\sqrt{n}}\mbox{.}
$$

Therefore, $b-a\mathord{\cdot}v$ only depends on $i$ and, according to (\ref{LEH.sec.3.thm.1.eq.0}),
\begin{equation}\label{LEH.sec.3.thm.1.eq.2.5}
\sum_{v\in\mathcal{L}_i}\frac{\partial}{\partial{a_1}}\frac{(b-a\mathord{\cdot}v)^{n-1}}{\pi(a)}=n{n\choose{i}}(z-i)^{n-2}\Biggl[(n-1)\!\left(\frac{1}{2}-\frac{i}{n}\right)\!-(z-i)\Biggr]\!\mbox{.}
\end{equation}

Similarly, according to (\ref{LEH.sec.3.thm.1.eq.1}),
\begin{multline}\label{LEH.sec.3.thm.1.eq.3}
\sum_{v\in\mathcal{L}_i}\frac{\partial^2}{\partial{a_1^2}}\frac{(b-a\mathord{\cdot}v)^{n-1}}{\pi(a)}=n\sqrt{n}{n\choose{i}}(z-i)^{n-3}\Biggl[\frac{(n-1)(n-2)}{4}\\
\hfill-2(n-1)\!\left(\frac{1}{2}-\frac{i}{n}\right)\!(z-i)+2(z-i)^2\Biggr]\!\mbox{.}
\end{multline}

Observe that $\mathcal{L}_i$ contains exactly
$$
n-2\choose{i}
$$
vertices whose first two coordinates are both equal to $0$,
$$
2{n-2\choose{i-1}}
$$
vertices whose first two coordinates are different, and
$$
n-2\choose{i-2}
$$
vertices whose first two coordinates are both equal to $1$. In particular,
$$
\begin{array}{rcl}
\displaystyle\sum_{v\in\mathcal{L}_i}\!\left(\frac{1}{2}-v_1\right)\!\!\left(\frac{1}{2}-v_2\right)\! & \!\!\!\!=\!\!\!\! & \displaystyle\frac{1}{4}\!\left[{n-2\choose{i}}+{n-2\choose{i-2}}-2{n-2\choose{i-1}}\right]\!\!\mbox{,}\\[\bigskipamount]
& \!\!\!\!=\!\!\!\! & \displaystyle{n\choose{i}}\!\left(\frac{1}{4}-\frac{i(n-i)}{n(n-1)}\right)\!\!\mbox{.}\\[\bigskipamount]
\end{array}
$$

Hence, it follows from (\ref{LEH.sec.3.thm.1.eq.2}) that
\begin{multline}\label{LEH.sec.3.thm.1.eq.4}
\sum_{v\in\mathcal{L}_i}\frac{\partial^2}{\partial{a_1}\partial{a_2}}\frac{(b-a\mathord{\cdot}v)^{n-1}}{\pi(a)}=n\sqrt{n}{n\choose{i}}(z-i)^{n-3}\Biggl[\frac{(n-1)(n-2)}{4}\\
\hfill-\frac{i(n-i)(n-2)}{n}-2(n-1)\!\left(\frac{1}{2}-\frac{i}{n}\right)\!(z-i)+(z-i)^2\Biggr]\!\mbox{.}
\end{multline}

Since the first $n$ coordinates of $a$ are equal, a vertex $v$ of $[0,1]^n$ satisfies $a\mathord{\cdot}v\leq{b}$ if and only if its coordinates sum to at most $z$. Hence, by Lemma \ref{LEH.sec.1.lem.2},
\begin{multline*}
\frac{\partial^2}{\partial{a_1^2}}\frac{V}{\|a\|}-\sqrt{n}\frac{\partial}{\partial{a_1}}\frac{V}{\|a\|}-\frac{\partial^2}{\partial{a_1}\partial{a_2}}\frac{V}{\|a\|}=\sum_{i=0}^{\lfloor{z}\rfloor}\frac{(-1)^i}{(n-1)!}\sum_{v\in\mathcal{L}_i}\Biggl[\frac{\partial^2}{\partial{a_1^2}}\frac{(b-a\mathord{\cdot}v)^{n-1}}{\pi(a)}\\
\hfill-\sqrt{n}\frac{\partial}{\partial{a_1}}\frac{(b-a\mathord{\cdot}v)^{n-1}}{\pi(a)}-\frac{\partial^2}{\partial{a_1}\partial{a_2}}\frac{(b-a\mathord{\cdot}v)^{n-1}}{\pi(a)}\Biggr]\!\mbox{.}
\end{multline*}

Combining this with (\ref{LEH.sec.0.thm.1.eq.-1}), (\ref{LEH.sec.3.thm.1.eq.2.5}), (\ref{LEH.sec.3.thm.1.eq.3}), and (\ref{LEH.sec.3.thm.1.eq.4}) completes the proof.
\end{proof}

Theorem \ref{LEH.sec.0.thm.1} can now be proven.

\begin{proof}[Proof of Theorem \ref{LEH.sec.0.thm.1}]
Assume that $n$ is equal to $d$. The theorem is obtained as a consequence of Theorem~\ref{LEH.sec.2.thm.1} and Lemma \ref{LEH.sec.3.lem.1}. Indeed, according to the latter, (\ref{LEH.sec.0.thm.1.eq.0}) and (\ref{LEH.sec.2.thm.1.eq.0.1}) are multiples of each other by a positive number.
\end{proof}

\begin{rem}
Note that a statement equivalent to that of Theorem \ref{LEH.sec.0.thm.1} can be obtained by replacing (\ref{LEH.sec.0.thm.1.eq.0}) with the right-hand side of (\ref{LEH.sec.2.lem.1.eq.1}).
\end{rem}


In order to determine the signs of weighted alternating sum of binomial coefficients such as (\ref{LEH.sec.0.thm.1.eq.0}), the following technical statement will be used.

\begin{prop}\label{LEH.sec.3.prop.1}
Consider a non-negative integer $l$ such that $l<z$ and, for each integer~$i$ satisfying $l\leq{i}\leq{z}$, a positive number $f_i(z)$. Assume that $f_i(z)/f_{i+1}(z)$ is monotonically increasing with $i$. In this case, if
\begin{equation}\label{LEH.sec.3.prop.1.eq.0}
z<l+\frac{1}{1-\!\left(\displaystyle\frac{l+1}{n-l}\frac{f_l(z)}{f_{l+1}(z)}\right)^{1/(n-3)}}\mbox{,}
\end{equation}
then the sum
\begin{equation}\label{LEH.sec.3.prop.1.eq.1}
\sum_{i=l}^{\lfloor{z}\rfloor}(-1)^i{n\choose{i}}(z-i)^{n-3}f_i(z)
\end{equation}
is positive when $l$ is even and negative when $l$ is odd.
\end{prop}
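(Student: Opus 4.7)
The plan is to show that the absolute values $a_i := \binom{n}{i}(z-i)^{n-3}f_i(z)$ of the terms of (\ref{LEH.sec.3.prop.1.eq.1}) form a strictly decreasing sequence in $i$ on the range $l \leq i \leq \lfloor z \rfloor$. Once this monotonicity is known, a standard pairing argument---grouping successive signed terms as $a_{l+2j} - a_{l+2j+1}$, with possibly a leftover positive $a_{\lfloor z \rfloor}$ when the number of terms is odd---writes $(-1)^l$ times the sum (\ref{LEH.sec.3.prop.1.eq.1}) as a sum of strictly positive quantities, which gives exactly the claimed sign.

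The substantive step is therefore the inequality $a_i > a_{i+1}$ for every $i$ with $l \leq i \leq \lfloor z \rfloor - 1$. Computing the ratio $a_i/a_{i+1}$ reduces this inequality to
$$\frac{z-i}{z-i-1} > \!\left(\frac{n-i}{i+1}\cdot\frac{f_{i+1}(z)}{f_i(z)}\right)^{\!1/(n-3)}\!\!.$$
The left-hand side equals $1 + 1/(z-i-1)$ and is strictly increasing in $i$, while on the right-hand side both factors decrease with $i$: the rational factor obviously, and $f_{i+1}(z)/f_i(z)$ by the hypothesis that its reciprocal $f_i(z)/f_{i+1}(z)$ is monotonically increasing. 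Consequently it suffices to verify the inequality at $i = l$. Setting $K = \left((l+1)/(n-l)\cdot f_l(z)/f_{l+1}(z)\right)^{1/(n-3)}$, the $i=l$ case reads $(z-l)/(z-l-1) > 1/K$, which on clearing denominators (and noting that the hypothesis implicitly forces $K < 1$, since otherwise $l + 1/(1-K) \leq l < z$) is equivalent to $z < l + 1/(1-K)$, i.e.\ exactly (\ref{LEH.sec.3.prop.1.eq.0}).

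The only real obstacle is the monotonicity observation on the right-hand side above, as this is the single point where the hypothesis on $f_i(z)/f_{i+1}(z)$ enters. The remaining matters are routine bookkeeping: the degenerate case $\lfloor z \rfloor = l$ is immediate, since the sum then reduces to the single term $(-1)^l a_l$; and if $z$ happens to be an integer, the $i = \lfloor z \rfloor$ term vanishes and can be discarded without affecting either the pairing or the sign. This completes the plan.
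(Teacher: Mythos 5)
Your proof is correct and follows essentially the same route as the paper's: both reduce the claim to showing that consecutive term magnitudes decrease, rewrite that as a ratio inequality in $i$, and use the monotonicity of $f_i(z)/f_{i+1}(z)$ to reduce the verification to $i=l$, where it becomes exactly condition (\ref{LEH.sec.3.prop.1.eq.0}). The only cosmetic difference is in the bookkeeping of the final pairing (the paper truncates the sum at the largest index of opposite parity to $l$, while you pair from the front and handle the leftover term), which is immaterial.
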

\begin{proof}
First observe that, when $l$ is equal to $\lfloor{z}\rfloor$, the result is immediate. Therefore, it is assumed in this proof that $l$ is less than $\lfloor{z}\rfloor$. In that case, denote by $m$ the largest integer less than or equal to $z$, whose parity is different from the parity of $l$. It is sufficient to show that
\begin{equation}\label{LEH.sec.3.prop.1.eq.2}
\sum_{i=l}^m(-1)^i{n\choose{i}}(z-i)^{n-3}f_i(z)
\end{equation}
is positive when $l$ is even and negative when $l$ is odd. Indeed, that sum only possibly misses one term of (\ref{LEH.sec.3.prop.1.eq.1}), but that missing term (when there is one) necessarily has the desired sign. Now observe that there is an even number of terms in (\ref{LEH.sec.3.prop.1.eq.2}), whose signs alternate. Hence, it suffices to prove that
\begin{equation}\label{LEH.sec.3.prop.1.eq.3}
{n\choose{i}}(z-i)^{n-3}f_i(z)-{n\choose{i+1}}(z-i-1)^{n-3}f_{i+1}(z)
\end{equation}
is positive when $l\leq{i}<m$. One can see that (\ref{LEH.sec.3.prop.1.eq.3}) is positive if and only if
$$
\frac{i+1}{d-i}\frac{f_i(z)}{f_{i+1}(z)}>\!\left(1-\frac{1}{z-i}\right)^{n-3}
$$
which in turn is equivalent to
$$
z<i+\frac{1}{1-\!\left(\displaystyle\frac{i+1}{n-i}\frac{f_i(z)}{f_{i+1}(z)}\right)^{1/(n-3)}}\mbox{.}
$$

As $f_i(z)/f_{i+1}(z)$ is monotonically increasing with $i$, so is the right-hand side of this inequality and it is therefore implied by (\ref{LEH.sec.3.prop.1.eq.0}), as desired.
\end{proof}

The following can be proven using Proposition \ref{LEH.sec.3.prop.1}.

\begin{lem}\label{LEH.sec.3.lem.2}
Assume that $4\leq{n}\leq{d}$. If in addition,
\begin{equation}\label{LEH.sec.3.lem.2.eq.1}
0<z<\min\!\left\{\frac{n-1}{4},\frac{n^{1/(n-3)}}{n^{1/(n-3)}-1}\right\}\!\mbox{,}
\end{equation}
then, at the point $a$ of $]0,+\infty[^n$ whose first $n$ coordinates are equal to $1/\sqrt{n}$,
\begin{equation}\label{LEH.sec.3.lem.2.eq.1.5}
\sum_{i=0}^{\lfloor{z}\rfloor}(-1)^i{n\choose{i}}(z-i)^{n-3}p_{i,n}(z)<0\mbox{.}
\end{equation}
\end{lem}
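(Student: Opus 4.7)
The plan is to apply Proposition~\ref{LEH.sec.3.prop.1} twice after a convenient decomposition of the summand of (\ref{LEH.sec.3.lem.2.eq.1.5}). Expanding (\ref{LEH.sec.0.thm.1.eq.-1}) by direct computation yields
$$
p_{0,n}(z)=\frac{nz(4z-(n-1))}{2(n-1)(n-2)}\quad\mbox{and}\quad p_{i,n}(z)-p_{0,n}(z)=\frac{i[3i+M(z)]}{(n-1)(n-2)}\mbox{,}
$$
where $M(z)=n(3n-5)/2-z(3n+1)$. I then write the sum in (\ref{LEH.sec.3.lem.2.eq.1.5}) as $p_{0,n}(z)A(z)+B(z)$, with $A(z)$ the sum (\ref{LEH.sec.0.thm.2.eq.1}) and $B(z)=\sum_{i=1}^{\lfloor z\rfloor}(-1)^i\binom{n}{i}(z-i)^{n-3}[p_{i,n}(z)-p_{0,n}(z)]$, and show that each summand is non-positive.

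The first piece is handled quickly. The first constraint in (\ref{LEH.sec.3.lem.2.eq.1}) gives $p_{0,n}(z)<0$, and applying Proposition~\ref{LEH.sec.3.prop.1} with $l=0$ and $f_i\equiv 1$ yields $A(z)>0$: the hypothesis (\ref{LEH.sec.3.prop.1.eq.0}) collapses to exactly $z<n^{1/(n-3)}/(n^{1/(n-3)}-1)$, which is the second constraint of (\ref{LEH.sec.3.lem.2.eq.1}). Consequently, $p_{0,n}(z)A(z)<0$. (The case $\lfloor z\rfloor=0$ is direct: $A(z)=z^{n-3}>0$ and $B(z)=0$.)

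For $B(z)$, I apply Proposition~\ref{LEH.sec.3.prop.1} a second time, now with $l=1$ (odd) and $f_i(z)=i[3i+M(z)]/((n-1)(n-2))$. Positivity of $f_i(z)$ for $1\leq i\leq\lfloor z\rfloor$ follows from $M(z)\geq(3n^2-8n+1)/4>0$ on $(0,(n-1)/4)$, and the monotonicity of $f_i/f_{i+1}=i(3i+M)/[(i+1)(3i+3+M)]$ reduces, after a derivative computation, to the positivity of $18i^2+6(M+3)i+M(M+3)$, which is automatic for $M>0$. The main obstacle is the third hypothesis of the proposition, which after rearrangement reads
$$
\left(\frac{z-2}{z-1}\right)^{n-3}<\frac{3+M(z)}{(n-1)(6+M(z))}\mbox{,}
$$
and is tight at the upper end of the admissible range of $z$. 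Both parts of the min in (\ref{LEH.sec.3.lem.2.eq.1}) are needed jointly here: the second forces $((z-2)/(z-1))^{n-3}<1/(n-1)$ via the elementary inequality $n^{1/(n-3)}+(n-1)^{-1/(n-3)}>2$ (which itself follows from $a+1/a\geq 2$), while the first guarantees that $M(z)$ is large enough for $3/((n-1)(6+M(z)))$ to be smaller than the resulting slack $1/(n-1)-((z-2)/(z-1))^{n-3}$. With this, Proposition~\ref{LEH.sec.3.prop.1} yields $B(z)\leq 0$, and (\ref{LEH.sec.3.lem.2.eq.1.5}) follows.
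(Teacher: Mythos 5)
Your overall strategy coincides with the paper's: the splitting $p_{i,n}(z)=p_{0,n}(z)+\frac{i[3i+M(z)]}{(n-1)(n-2)}$ is exactly the paper's decomposition $p_{i,n}=(-f_i+g_i)/((n-1)(n-2))$ with $f_i=2nz\left(\frac{n-1}{4}-z\right)$ constant in $i$ and $g_i=i[3i+M(z)]$, and both arguments then invoke Proposition \ref{LEH.sec.3.prop.1} twice, with $l=0$ and with $l=1$. Your algebra, your positivity and monotonicity checks for $g_i$, your handling of $\lfloor z\rfloor=0$ (which covers $n=4$), and your rearrangement of (\ref{LEH.sec.3.prop.1.eq.0}) into $\left(\frac{z-2}{z-1}\right)^{n-3}<\frac{3+M(z)}{(n-1)(6+M(z))}$ are all correct.

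The gap is at the decisive final step. You assert that the constraint $z<\frac{n-1}{4}$ makes $\frac{3}{(n-1)(6+M(z))}$ smaller than the ``slack'' $\frac{1}{n-1}-\left(\frac{z-2}{z-1}\right)^{n-3}$, but requiring this is literally a restatement of the target inequality, and you never produce a positive lower bound for that slack; the bound $\left(\frac{z-2}{z-1}\right)^{n-3}<\frac{1}{n-1}$ you derived only shows the slack is positive, which is useless here since the slack genuinely shrinks as $z$ approaches the upper end of its range. The clean way to close this---and it is what the paper does---is the two-sided chain $\left(\frac{z-2}{z-1}\right)^{n-3}<\frac{1}{n}\leq\frac{3+M(z)}{(n-1)(6+M(z))}$. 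The left inequality follows from $z<\frac{n^{1/(n-3)}}{n^{1/(n-3)}-1}$, which gives $\frac{z-2}{z-1}<2-n^{1/(n-3)}\leq n^{-1/(n-3)}$ by $a+1/a\geq2$ (a more direct use of your own observation than the $(n-1)^{-1/(n-3)}$ variant). The right inequality is equivalent to $M(z)\geq 3n-6$, which holds because $z<\frac{n-1}{4}$ gives $M(z)>\frac{3n^2-8n+1}{4}$ and $\frac{3n^2-8n+1}{4}-(3n-6)=\frac{(3n-5)(n-5)}{4}\geq0$ for $n\geq5$. With that computation inserted, your proof is complete and matches the paper's.
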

\begin{proof}
Assume that $z$ satisfies (\ref{LEH.sec.3.lem.2.eq.1}) and note that, when $n=4$, this implies
$$
0<z<\frac{3}{4}\mbox{.}
$$

As a consequence,
$$
\sum_{i=0}^{\lfloor{z}\rfloor}(-1)^i{n\choose{i}}(z-i)^{n-3}p_{i,n}(z)=zp_{0,4}(z)\mbox{.}
$$

However, in turn,
$$
p_{0,4}(z)=\frac{4}{3}z\!\left(z-\frac{3}{4}\right)\!\!\mbox{,}
$$
and the desired result immediately follows. Now assume that $n$ is at least $5$ and observe that $p_{i,n}(z)$ can be rewritten as
\begin{equation}\label{LEH.sec.3.lem.2.eq.2}
p_{i,n}(z)=\frac{-f_i(z)+g_i(z)}{(n-1)(n-2)}
\end{equation}
where
$$
f_i(z)=2nz\!\left(\frac{n-1}{4}-z\right)\!
$$
and
$$
g_i(z)=(3n+1)\!\left(\frac{n}{2}-\frac{3n}{3n+1}-z\right)\!i+3i^2\mbox{.}
$$

Further note that $f_i(z)$ is positive when $0\leq{i}\leq{z}$ because $z$ is less than $(n-1)/4$. As in addition, $n$ is at least $4$, $g_i(z)$ is positive when $1\leq{i}\leq{z}$. Moreover, since $f_i(z)$ does not depend on $i$, it is immediate that the ratio $f_i(z)/f_{i+1}(z)$ is monotonically increasing with $i$.

On the other hand, $g_i(z)/g_{i+1}(z)$ can be written in the form
$$
\left(1-\frac{1}{i+1}\right)\!\!\left(1-\frac{1}{\alpha+i+1}\right)
$$
where
$$
\alpha=\frac{(3n+1)}{3}\!\left(\frac{n}{2}-\frac{3n}{3n+1}-z\right)\!\!\mbox{.}
$$

Again, $\alpha$ is positive because $z$ is less than $(n-1)/4$. As a consequence, the ratio $g_i(z)/g_{i+1}(z)$ is also monotonically increasing with $i$.

Now observe that $g_0(z)$ is equal to $0$. Hence, by (\ref{LEH.sec.3.lem.2.eq.2}),
\begin{multline*}
\sum_{i=0}^{\lfloor{z}\rfloor}(-1)^i{n\choose{i}}(z-i)^{n-3}p_{i,n}(z)=-\sum_{i=0}^{\lfloor{z}\rfloor}(-1)^i{n\choose{i}}\frac{f_i(z)(z-i)^{n-3}}{(n-1)(n-2)}\\
+\sum_{i=1}^{\lfloor{z}\rfloor}(-1)^i{n\choose{i}}\frac{g_i(z)(z-i)^{n-3}}{(n-1)(n-2)}\mbox{.}
\end{multline*}

According to Proposition \ref{LEH.sec.3.prop.1}, this is negative when
$$
0<z<\min\!\left\{\frac{n^{1/(n-3)}}{n^{1/(n-3)}-1},1+\frac{1}{1-q^{1/(n-3)}}\right\}\!
$$
where
$$
q=\frac{2}{n-1}\frac{g_1(z)}{g_2(z)}\mbox{.}
$$

In order to complete the proof, it suffices to show that
$$
\frac{1}{1-q^{1/(n-3)}}\geq\frac{n^{1/(n-3)}}{n^{1/(n-3)}-1}
$$
or, equivalently, that $q\geq1/n$. Observe that
$$
\begin{array}{rcl}
q & \!\!\!\!=\!\!\!\! & \displaystyle\frac{1}{n-1}\frac{(3n+1)(n-2z)-6n+6}{(3n+1)(n-2z)-6n+12}\mbox{,}\\[\bigskipamount]
& \!\!\!\!=\!\!\!\! & \displaystyle\frac{1}{n-1}\!\left(1-\frac{6}{3n^2-5n+12-(2+6n)z}\right)\!\!\mbox{.}\\
\end{array}
$$

Therefore, $q\geq1/n$ if and only if
$$
\frac{6}{3n^2-5n+12-(2+6n)z}\leq\frac{1}{n}
$$

Note that the denominator in the left-hand side of this inequality is positive. Hence, this is equivalent to the non-negativity of
$$
3n^2-11n+12-(2+6n)z
$$

However, since $z<(n-1)/4$,
$$
3n^2-11n+12-(2+6n)z>\frac{3}{2}n^2-10n+\frac{25}{2}\mbox{.}
$$

As the right-hand side of this inequality is non-negative when $n\geq5$, this shows that $q$ is at least $1/n$, as desired.
\end{proof}

Theorem \ref{LEH.sec.0.thm.0.1} can now be proven.

\begin{proof}[Proof of Theorem \ref{LEH.sec.0.thm.0.1}]
By the symmetries of the hypercube, it suffices to show that $V$ has a strict local maximum on $\mathbb{S}^{d-1}\cap]0,+\infty[^d$ at the point $a$ whose coordinates are all equal to $1/\sqrt{d}$. Assume that $n$ is equal to $d$. In that case,
$$
z=\frac{d}{2}-t\sqrt{d}
$$
and the result follows from Theorem \ref{LEH.sec.0.thm.1} and Lemma \ref{LEH.sec.3.lem.2}.
\end{proof}

%
%

\section{The sub-diagonals of the hypercube}\label{LEH.sec.3.5}

While Theorem \ref{LEH.sec.2.thm.1} only requires the negativity of (\ref{LEH.sec.2.thm.1.eq.0.1}) to treat the diagonals of the hypercube, it further needs (\ref{LEH.sec.2.thm.1.eq.0.2}) to be negative as well in the case of lower order sub-diagonals. The following lemma gives an expression of the latter quantity. As in the previous section, $n$ is an integer such that $4\leq{n}\leq{d}$ and $z$ is related to $t$ via the change of variables (\ref{LEH.sec.3.eq.1}).

\begin{lem}\label{LEH.sec.3.5.lem.1}
Assume that $n$ is less than $d$. In this case, at the point of $]0,+\infty[^n$ whose first $n$ coordinates are $1/\sqrt{n}$,
$$
\frac{\partial^2}{\partial{a_d^2}}\frac{V}{\|a\|}=\sum_{i=0}^{\lfloor{z}\rfloor}\frac{(-1)^in\sqrt{n}}{12(n-3)!}{n\choose{i}}(z-i)^{n-3}\mbox{.}
$$
\end{lem}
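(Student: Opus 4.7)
The plan is to apply Lemma \ref{LEH.sec.1.lem.3} directly and then specialize to the given point. Since $n<d$, the integer $j=d$ satisfies $n<j\leq d$, so Lemma \ref{LEH.sec.1.lem.3} gives
$$
\frac{\partial^2}{\partial a_d^2}\frac{V}{\|a\|}=\sum\frac{(-1)^{\sigma(v)}(b-a\mathord{\cdot}v)^{n-3}}{12(n-3)!\pi(a)}
$$
where the sum is over the vertices $v$ of $[0,1]^n$ with $a\mathord{\cdot}v\leq b$. All that remains is to evaluate the right-hand side at the point $a$ whose first $n$ coordinates equal $1/\sqrt{n}$.

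At that point, the non-zero coordinates of $a$ are the first $n$, each equal to $1/\sqrt{n}$, so $\pi(a)=n^{-n/2}$. Also $\sigma(a)=\sqrt{n}$, hence by (\ref{LEH.sec.1.eq.1}) and the change of variables (\ref{LEH.sec.3.eq.1}), for any vertex $v$ of $[0,1]^n$ with coordinate sum $\sigma(v)=i$ one has
$$
b-a\mathord{\cdot}v=\frac{\sqrt{n}}{2}-t-\frac{i}{\sqrt{n}}=\frac{z-i}{\sqrt{n}}\mbox{.}
$$
In particular $b-a\mathord{\cdot}v$ depends only on $i$, the condition $a\mathord{\cdot}v\leq b$ is equivalent to $i\leq z$, and the number of vertices in $\mathcal{L}_i$ (those with $\sigma(v)=i$) is $\binom{n}{i}$, exactly as in the proof of Lemma \ref{LEH.sec.3.lem.1}.

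Grouping the sum by the value of $i=\sigma(v)$ and substituting these expressions gives
$$
\frac{\partial^2}{\partial a_d^2}\frac{V}{\|a\|}=\sum_{i=0}^{\lfloor z\rfloor}(-1)^i\binom{n}{i}\frac{(z-i)^{n-3}/n^{(n-3)/2}}{12(n-3)!\,n^{-n/2}}\mbox{.}
$$
Since $n^{n/2}/n^{(n-3)/2}=n^{3/2}=n\sqrt{n}$, this simplifies to the stated formula. The argument is essentially bookkeeping; the only step that is not purely formal is verifying that Lemma \ref{LEH.sec.1.lem.3} indeed applies (which it does because $n\geq 4$ and $n<d$), so no real obstacle is anticipated.
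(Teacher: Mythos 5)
Your proposal is correct and follows essentially the same route as the paper: invoke Lemma \ref{LEH.sec.1.lem.3} with $j=d$, observe that at the given point $b-a\mathord{\cdot}v=(z-i)/\sqrt{n}$ depends only on $i=\sigma(v)$, and group the $\binom{n}{i}$ vertices in each level to produce the factor $n\sqrt{n}$. The explicit bookkeeping with $\pi(a)=n^{-n/2}$ is just a slightly more detailed version of the paper's one-line evaluation of each term.
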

\begin{proof}
According to Lemma \ref{LEH.sec.1.lem.3},
\begin{equation}\label{LEH.sec.3.5.lem.1.eq.1}
\frac{\partial^2}{\partial{a_j^2}}\frac{V}{\|a\|}=\sum\frac{(-1)^{\sigma(v)}(b-a\mathord{\cdot}v)^{n-3}}{12(n-3)!\pi(a)}
\end{equation}
where the sum is over the vertices $v$ of $[0,1]^n$ satisfying $a\mathord{\cdot}v\leq{b}$. At the point $a$ of $]0,+\infty[^n$ whose first $n$ coordinates are equal to $1/\sqrt{n}$, these vertices are precisely the ones whose sum of coordinates is at most $z$. Recall that, for any integer $i$ such that $0\leq{i}\leq{z}$, the hypercube $[0,1]^n$ has exactly
$$
{n\choose{i}}
$$
vertices whose coordinates sum to $i$. As, for any such vertex $v$,
$$
\frac{(-1)^{\sigma(v)}(b-a\mathord{\cdot}v)^{n-3}}{12(n-3)!\pi(a)}=n\sqrt{n}\frac{(-1)^i(z-i)^{n-3}}{12(n-3)!}\mbox{,}
$$
the desired expression follows from (\ref{LEH.sec.3.5.lem.1.eq.1}).
\end{proof}

Theorem \ref{LEH.sec.0.thm.2} can now be proven.

\begin{proof}[Proof of Theorem \ref{LEH.sec.0.thm.2}]
Assume that $n$ is less than $d$. According to Lemma \ref{LEH.sec.3.lem.1} (\ref{LEH.sec.0.thm.2.eq.0}) and (\ref{LEH.sec.2.thm.1.eq.0.1}) have the same sign. By Lemma \ref{LEH.sec.3.5.lem.1}, (\ref{LEH.sec.0.thm.2.eq.1}) and (\ref{LEH.sec.2.thm.1.eq.0.2}) also have the same sign and the result follows from Theorem~\ref{LEH.sec.2.thm.1}.
\end{proof}

\begin{rem}
Theorem \ref{LEH.sec.0.thm.2} can be stated equivalently using the right-hand side of (\ref{LEH.sec.2.lem.1.eq.1}) instead of (\ref{LEH.sec.0.thm.2.eq.0}) and the right-hand side of (\ref{LEH.sec.2.lem.1.eq.2}) instead of (\ref{LEH.sec.0.thm.2.eq.1}).
\end{rem}

The results established so far allow to prove the local extremality of $V$ at certain points. The following theorem makes it possible to prove that, at these points, $V$ is not locally extremal, even weakly so. Its proof is similar to that of Theorem~\ref{LEH.sec.2.thm.1}, except that it relies on the necessary conditions of the Lagrange multipliers theorem instead of the sufficient conditions.

\begin{thm}\label{LEH.sec.3.5.thm.1}
Consider an integer $n$ satisfying $2\leq{n}<d$ and assume that $V$ is twice continuously differentiable at the point $a$ of $\mathbb{R}^n$ whose first $n$ coordinates are equal to $1/\sqrt{n}$. If at that point,
\begin{equation}\label{LEH.sec.3.5.thm.1.eq.0.1}
\frac{\partial^2}{\partial{a_1^2}}\frac{V}{\|a\|}-\sqrt{n}\frac{\partial}{\partial{a_1}}\frac{V}{\|a\|}-\frac{\partial^2}{\partial{a_1}\partial{a_2}}\frac{V}{\|a\|}
\end{equation}
and
\begin{equation}\label{LEH.sec.3.5.thm.1.eq.0.2}
\frac{\partial^2}{\partial{a_d^2}}\frac{V}{\|a\|}
\end{equation}
are both non-zero and have opposite signs, then $V$ does not have a local extremum (even a weak one) on $\mathbb{S}^{d-1}\cap[0,+\infty[^d$ at $a$.
\end{thm}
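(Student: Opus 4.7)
The plan is to follow the scheme of the proof of Theorem \ref{LEH.sec.2.thm.1} up to the computation of the Hessian of the Lagrangian at $a$, and then to invoke the second order \emph{necessary} condition of the constrained Lagrange multipliers theorem in place of its sufficient condition in order to rule out local extremality.

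More precisely, I would introduce the Lagrangian
$$
L_\lambda=\frac{V}{\|a\|}+\lambda\!\left(\|a\|^2-1\right)
$$
with $\lambda=-(\sqrt{n}/2)\,\partial(V/\|a\|)/\partial a_1$ evaluated at $a$, and verify exactly as in the proof of Theorem \ref{LEH.sec.2.thm.1} (using the symmetries of the hypercube for $1\leq j\leq n$ and Lemma \ref{LEH.sec.1.lem.1} for $n<j\leq d$) that $a$ is a critical point of $L_\lambda$. The same Hessian computation shows that, for every $x\in\mathbb{R}^d$ whose first $n$ coordinates sum to $0$,
$$
\sum_{j=1}^d\sum_{k=1}^d x_jx_k\frac{\partial^2L_\lambda}{\partial a_j\partial a_k}=Q_1\sum_{i=1}^nx_i^2+Q_2\sum_{i=n+1}^dx_i^2,
$$
where $Q_1$ and $Q_2$ denote, respectively, (\ref{LEH.sec.3.5.thm.1.eq.0.1}) and (\ref{LEH.sec.3.5.thm.1.eq.0.2}) evaluated at $a$.

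The key step is to exhibit two admissible tangent directions at $a$ that realize both signs of this quadratic form. I would consider $x^1=e_1-e_2$ and $x^2=e_d$, where $e_i$ denotes the $i$-th standard basis vector of $\mathbb{R}^d$. Both vectors are orthogonal to $a$ and have their first $n$ coordinates summing to $0$. As the first $n$ coordinates of $a$ are strictly positive, $x^1$ is the initial velocity of a two-sided smooth curve on $\mathbb{S}^{d-1}\cap[0,+\infty[^d$ through $a$; as $a_d=0$, the direction $x^2$ gives only a one-sided such curve, starting at $a$ and moving in the positive $e_d$ direction. Plugging $x^1$ and $x^2$ into the quadratic form yields $2Q_1$ and $Q_2$, respectively. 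Since $a$ is a critical point of $L_\lambda$, the first derivative of $V$ along each such curve vanishes at $a$ and its second derivative there is precisely the quadratic form evaluated at $x^i$.

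If $Q_1$ and $Q_2$ are non-zero and have opposite signs, then $V$ is strictly monotone of opposite types along these two curves in a neighborhood of $a$, which prevents $V$ from having any kind of local extremum at $a$. The main subtlety that will require care is the one-sided nature of the direction $x^2$; however, a strict one-sided increase of $V$ along one curve combined with a strict decrease along another is already enough to rule out both a weak local maximum and a weak local minimum at $a$.
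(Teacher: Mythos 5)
Your proposal is correct and follows essentially the same route as the paper: the same Lagrangian with $\lambda=-(\sqrt{n}/2)\,\partial(V/\|a\|)/\partial a_1$, the same Hessian decomposition, and the same two test directions $e_1-e_2$ and $e_d$, the only difference being that the paper directly cites the second-order necessary conditions of the Lagrange multipliers theorem where you re-derive them via curves on the constraint set (which also makes the one-sidedness of the $e_d$ direction explicit). One wording quibble: $V$ is not ``strictly monotone'' along the two-sided curve---it has a strict one-dimensional local extremum there---but your actual conclusion, that $V$ takes values both above and below $V(a)$ arbitrarily close to $a$, is exactly what is needed.
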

\begin{proof}
As in the proof of Theorem \ref{LEH.sec.2.thm.1}, denote
$$
\lambda=-\frac{\sqrt{n}}{2}\frac{\partial}{\partial{a_1}}\frac{V}{\|a\|}\mbox{.}
$$

Here and in the remainder of the proof, all partial derivatives are taken at the point $a$ of $\mathbb{R}^n$ whose first $n$ coordinates are equal to $1/\sqrt{n}$.

Further denote
$$
L_\lambda=\frac{V}{\|a\|}+\lambda\!\left(\|a\|^2-1\right)\!.
$$

With the above choice for $\lambda$, one obtains from the same argument as in the proof of Theorem \ref{LEH.sec.2.thm.1} that $a$ is a critical point of $L_\lambda$. By the necessary conditions of the Lagrange multipliers theorem (see for instance Proposition 3.1.1 from \cite{Bertsekas1999}), if $V$ has a local maximum on $\mathbb{S}^{d-1}\cap]0,+\infty[^n$ at point $a$ then, for every non-zero point $x$ in $\mathbb{R}^d$ whose first $n$ coordinates sum to $0$,
\begin{equation}\label{LEH.sec.3.5.thm.1.eq.1}
\sum_{j=1}^d\sum_{k=1}^dx_jx_k\frac{\partial^2{L_\lambda}}{\partial{a_j}\partial{a_k}}\leq0\mbox{.}
\end{equation}

However, as shown in the proof of Theorem \ref{LEH.sec.2.thm.1},
\begin{multline*}
\sum_{j=1}^d\sum_{k=1}^dx_jx_k\frac{\partial^2{L_\lambda}}{\partial{a_j}\partial{a_k}}=\!\left[\frac{\partial^2}{\partial{a_1^2}}\frac{V}{\|a\|}-\sqrt{n}\frac{\partial}{\partial{a_1}}\frac{V}{\|a\|}-\frac{\partial^2}{\partial{a_1}\partial{a_2}}\frac{V}{\|a\|}\right]\!\sum_{i=1}^nx_i^2\\
+\frac{\partial^2}{\partial{a_1}\partial{a_2}}\frac{V}{\|a\|}\!\left[\sum_{i=1}^nx_i\right]^2\!+\frac{\partial^2}{\partial{a_d^2}}\frac{V}{\|a\|}\sum_{i=n+1}^dx_i^2\mbox{.}
\end{multline*}

Hence, taking for $x$ the point of $\mathbb{R}^d$ whose first two coordinates are $1$ and $-1$ and whose all other coordinates are equal to $0$ yields
$$
\sum_{j=1}^d\sum_{k=1}^dx_jx_k\frac{\partial^2{L_\lambda}}{\partial{a_j}\partial{a_k}}=2\!\left[\frac{\partial^2}{\partial{a_1^2}}\frac{V}{\|a\|}-\sqrt{n}\frac{\partial}{\partial{a_1}}\frac{V}{\|a\|}-\frac{\partial^2}{\partial{a_1}\partial{a_2}}\frac{V}{\|a\|}\right]\!\mbox{,}
$$
and taking, for $x$ the point of $\mathbb{R}^d$ whose first $d-1$ coordinates are equal to $0$ and whose last coordinate is equal to $1$,
$$
\sum_{j=1}^d\sum_{k=1}^dx_jx_k\frac{\partial^2{L_\lambda}}{\partial{a_j}\partial{a_k}}=\frac{\partial^2}{\partial{a_d^2}}\frac{V}{\|a\|}\mbox{.}
$$

Therefore, when (\ref{LEH.sec.3.5.thm.1.eq.0.1}) and (\ref{LEH.sec.3.5.thm.1.eq.0.2}) are non-zero and have opposite signs, then $V$ does not have a local maximum on $\mathbb{S}^{d-1}\cap[0,+\infty[^d$ at $a$. By the same argument, but with the necessary conditions of the Lagrange multipliers theorem for local minima instead of maxima, one obtains that, under the same assumptions, $V$ cannot have a local minimum on $\mathbb{S}^{d-1}\cap[0,+\infty[^d$ at $a$.
\end{proof}

Theorem \ref{LEH.sec.0.thm.0.3} is now proven as a consequence of Theorem \ref{LEH.sec.3.5.thm.1}.

\begin{proof}[Proof of Theorem \ref{LEH.sec.0.thm.0.3}]
Assume that $4\leq{n}<d$ and that $t$ satisfies (\ref{LEH.sec.0.thm.0.3.eq.1}). By the symmetries of the hypercube, it suffices to show that $V$ does not have a local extremum on $\mathbb{S}^{d-1}\cap[0,+\infty[^d$ at the point $a$ of $\mathbb{R}^n$ whose first $n$ coordinates are equal to $1/\sqrt{n}$. Recall that $z$ and $t$ are linked via the change of variables (\ref{LEH.sec.3.eq.1}). In particular, (\ref{LEH.sec.0.thm.0.3.eq.1}) is equivalent to the condition on $x$ in the statement of Lemma \ref{LEH.sec.3.lem.2}. Hence, according to that lemma and to Lemma \ref{LEH.sec.3.lem.1},
$$
\frac{\partial^2}{\partial{a_1^2}}\frac{V}{\|a\|}-\sqrt{n}\frac{\partial}{\partial{a_1}}\frac{V}{\|a\|}-\frac{\partial^2}{\partial{a_1}\partial{a_2}}\frac{V}{\|a\|}
$$
is negative at the point $a$ of $\mathbb{R}^n$ whose first $n$ coordinates are equal to $1/\sqrt{n}$. Therefore, by Theorem \ref{LEH.sec.3.5.thm.1}, it suffices to show that
$$
\frac{\partial^2}{\partial{a_d^2}}\frac{V}{\|a\|}
$$
is positive at that point. Lemma \ref{LEH.sec.3.5.lem.1} yields
$$
\frac{\partial^2}{\partial{a_d^2}}\frac{V}{\|a\|}=\sum_{i=0}^{\lfloor{z}\rfloor}\frac{(-1)^in\sqrt{n}}{12(n-3)!}{n\choose{i}}(z-i)^{n-3}\mbox{,}
$$
which, according to Proposition \ref{LEH.sec.3.prop.1} is positive when
$$
0<z<\frac{n^{1/(n-3)}}{n^{1/(n-3)}-1}\mbox{,}
$$

According to (\ref{LEH.sec.3.eq.1}), this is implied by (\ref{LEH.sec.0.thm.0.3.eq.1}), as desired.
\end{proof}

\section{Low dimensional hypercubes}\label{LEH.sec.4}

As observed in the proof of Lemma \ref{LEH.sec.3.lem.2}, 
$$
\sum_{i=0}^{\lfloor{z}\rfloor}(-1)^i{d\choose{i}}(z-i)^{d-3}p_{i,d}(z)=\frac{4}{3}z^2\!\left(z-\frac{3}{4}\right)
$$
when $d$ is equal to $4$ and $0<z<1$. This made it possible to show that if
$$
\frac{\sqrt{4}}{2}-\frac{3}{4}\frac{1}{\sqrt{4}}<t<\frac{\sqrt{4}}{2}\mbox{.}
$$
then the $3$-dimensional volume of $H\cap[0,1]^4$ is strictly locally maximal when $H$ is orthogonal to a diagonal of the $4$-dimensional hypercube $[0,1]^4$. By Theorem~\ref{LEH.sec.0.thm.1}, this also immediately proves that, if
$$
\frac{\sqrt{4}}{2}-\frac{1}{\sqrt{4}}\leq{t}<\frac{\sqrt{4}}{2}-\frac{3}{4}\frac{1}{\sqrt{4}}\mbox{,}
$$
then the $3$-dimensional volume of $H\cap[0,1]^4$ is strictly locally minimal when $H$ is orthogonal to a diagonal of $[0,1]^d$. The range for $t$ when local minimality occurs can be completed. Indeed, when $d=4$ and $1\leq{z}\leq{2}$,
$$
\begin{array}{rcl}
\displaystyle\sum_{i=0}^{\lfloor{z}\rfloor}(-1)^i{d\choose{i}}(z-i)^{d-3}p_{i,d}(z) & \!\!\!\!=\!\!\!\! & \displaystyle zp_{0,4}(z)-4(z-1)p_{1,4}(z)\mbox{,}\\
& \!\!\!\!=\!\!\!\! & \displaystyle -4z^3+17z^2-24z+\frac{34}{3}\mbox{.}\\
\end{array}
$$

This polynomial admits, as its unique real root
\begin{equation}\label{LEH.sec.4.eq.1}
\rho_4^-=\frac{17+(17-12\sqrt{2})^{1/3}+(17+12\sqrt{2})^{1/3}}{12}
\end{equation}
which is about $1.71229$. Moreover, it is positive when $z<\rho_4^-$, and negative when $z>\rho_4^-$. As a consequence, by Theorem \ref{LEH.sec.0.thm.1}, if
$$
\frac{\sqrt{4}}{2}-\frac{\rho_4^-}{\sqrt{4}}<t<\frac{\sqrt{4}}{2}-\frac{3}{4}\frac{1}{\sqrt{4}}\mbox{,}
$$
then the $3$-dimensional volume of $H\cap[0,1]^4$ is strictly locally minimal when $H$ is orthogonal to a diagonal of $[0,1]^4$ and if
$$
0<t<\frac{\sqrt{4}}{2}-\frac{\rho_4^-}{\sqrt{4}}\mbox{,}
$$
then that volume becomes strictly locally maximal again when $H$ is orthogonal to a diagonal of the $4$-dimensional hypercube $[0,1]^4$. These observations carry over to (at least) the first few higher dimensions.

\begin{prop}\label{LEH.sec.4.prop.1}
Assume that $4\leq{d}\leq7$. There exist two numbers $\rho_d^-$ and $\rho_d^+$ such that $0<\rho_d^+<\rho_d^-<d/2$ and, if 
$$
t\in\!\left]0,\frac{\sqrt{d}}{2}-\frac{\rho_d^-}{\sqrt{d}}\right[\!\cup\!\left]\frac{\sqrt{d}}{2}-\frac{\rho_d^+}{\sqrt{d}},\frac{\sqrt{d}}{2}\right[\!
$$
then the $(d-1)$-dimensional volume of $H\cap[0,1]^d$ is strictly locally maximal when $H$ is orthogonal to a diagonal of $[0,1]^d$. However, if
$$
t\in\!\left]\frac{\sqrt{d}}{2}-\frac{\rho_d^-}{\sqrt{4}},\frac{\sqrt{d}}{2}-\frac{\rho_d^+}{\sqrt{4}}\right[\!
$$
then the $(d-1)$-dimensional volume of $H\cap[0,1]^d$ is strictly locally minimal when $H$ is orthogonal to a diagonal of $[0,1]^d$.
\end{prop}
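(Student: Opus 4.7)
The plan is to invoke Theorem \ref{LEH.sec.0.thm.1}, which reduces the proposition to a sign analysis of
$$
S_d(z)=\sum_{i=0}^{\lfloor{z}\rfloor}(-1)^i{d\choose{i}}(z-i)^{d-3}p_{i,d}(z)
$$
on $(0,d/2)$, because the change of variables $z=d/2-t\sqrt{d}$ sends $t\in(0,\sqrt{d}/2)$ bijectively onto $z\in(0,d/2)$. Concretely, it suffices to exhibit, for each $d\in\{4,5,6,7\}$, two numbers $0<\rho_d^+<\rho_d^-<d/2$ with $S_d$ negative on $(0,\rho_d^+)\cup(\rho_d^-,d/2)$ and positive on $(\rho_d^+,\rho_d^-)$, and then to translate back via $t=\sqrt{d}/2-z/\sqrt{d}$.

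First I would make the piecewise polynomial structure of $S_d$ explicit. Between consecutive integer breakpoints in $[0,d/2]$, $S_d$ coincides with a polynomial of degree at most $d-1$, and $S_d$ is continuous at each breakpoint $z=k$ since the term added there vanishes to order $d-3\geq1$. There are exactly $\lceil{d/2}\rceil$ non-degenerate polynomial pieces, as already noted in the introduction: two for $d=4$, three for $d\in\{5,6\}$, and four for $d=7$. For $d=4$ the two pieces $P_{4,0}(z)=\frac{4}{3}z^2(z-3/4)$ and $P_{4,1}(z)=-4z^3+17z^2-24z+34/3$ were already computed in the discussion preceding the proposition; the remaining pieces for $d\in\{5,6,7\}$ I would compute symbolically from (\ref{LEH.sec.0.thm.1.eq.-1}).

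Second, I would isolate the real roots of $S_d$ inside $(0,d/2)$. By Lemma \ref{LEH.sec.3.lem.2}, $S_d$ is negative on an initial interval beginning at $z=0$; by Theorem \ref{LEH.sec.0.thm.0.2} combined with Theorem \ref{LEH.sec.0.thm.1}, $S_d$ is also negative on an interval ending at $z=d/2$. For each $d\in\{4,5,6,7\}$ one then applies a certified root-isolation procedure — for example Sturm sequences on each piece restricted to its subinterval, or interval arithmetic — to verify that $S_d$ has exactly two real zeros in $(0,d/2)$ and that $S_d$ is strictly positive between them. Labelling these zeros $\rho_d^+<\rho_d^-$ concludes the argument. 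For $d=4$ the zeros are already known explicitly, namely $3/4$ and (\ref{LEH.sec.4.eq.1}); for $d\in\{5,6,7\}$ certified rational isolation intervals suffice.

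The main obstacle is this second step: certifying that no additional sign changes occur on intermediate subintervals. With up to four polynomial pieces of degree up to $6$ (when $d=7$), the root-counting cannot be done by hand estimates alone and relies essentially on symbolic computation, which is precisely the restriction that confines the statement to the range $d\leq7$.
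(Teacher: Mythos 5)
Your reduction is the same as the paper's: invoke Theorem~\ref{LEH.sec.0.thm.1} to convert the claim into the assertion that the piecewise polynomial $S_d(z)$ is negative on $\left]0,\rho_d^+\right[\cup\left]\rho_d^-,d/2\right[$ and positive on $\left]\rho_d^+,\rho_d^-\right[$, anchor the sign near the endpoints, and then pin down the two interior sign changes piece by piece. Where you diverge is in how the decisive sign pattern is certified. The paper carries this out entirely by hand: for $d=4$ and $d=5$ the pieces factor explicitly (giving $\rho_4^+=3/4$, $\rho_4^-$ as in (\ref{LEH.sec.4.eq.1}), $\rho_5^+=1$, $\rho_5^-=2$), and for $d=6,7$ it uses monotonicity of each piece (read off from the first one or two derivatives) together with sign evaluations at the integers to locate a unique root per relevant subinterval; for the initial range $0<z<1$ with $d\geq5$ it simply cites \cite{Konig2021}. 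You instead defer this step to a certified root-isolation procedure (Sturm sequences or interval arithmetic), which is a legitimate and fully rigorous alternative, but as written the crux of the proof is declared rather than executed --- to be complete you would need to actually produce the isolation intervals for $d=5,6,7$. Two smaller points: your claim that the root-counting ``cannot be done by hand'' and that this is what confines the statement to $d\leq7$ is contradicted by the paper, which does it by hand for $d\le 7$ and reports symbolic verification up to dimension $300$; and your appeal to Theorem~\ref{LEH.sec.0.thm.0.2} to get negativity of $S_d$ near $z=d/2$ is an indirect contrapositive-of-Theorem~\ref{LEH.sec.0.thm.1} argument that works but needs the extra remark that a point cannot be simultaneously a strict local maximum and a strict local minimum, plus your root count to exclude isolated zeros there; the paper avoids this by computing the sign of the last piece directly.
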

\begin{proof}
The proposition has been proven above when $d$ is equal to $4$. Assume that $d\geq5$ and recall that the desired result is proven in \cite{Konig2021} when
$$
\frac{\sqrt{d}}{2}-\frac{1}{\sqrt{d}}<t<\frac{\sqrt{d}}{2}\mbox{.}
$$

Assume first that $d$ is equal to $5$. If in addition, $1\leq{z}\leq{2}$, then
$$
\sum_{i=0}^{\lfloor{z}\rfloor}(-1)^i{d\choose{i}}(z-i)^{d-3}p_{i,d}(z)=-\frac{5}{6}(4z^2-10z+7)(z-1)(z-2)
$$
and if $2<z\leq{5/2}$, then
$$
\sum_{i=0}^{\lfloor{z}\rfloor}(-1)^i{d\choose{i}}(z-i)^{d-3}p_{i,d}(z)=\frac{5}{2}(2z^2-10z+13)(z-2)(z-3)\mbox{.}
$$

Since $4z^2-10z+7$ and $2z^2-10z+13$ are both always positive, the desired result follows form Theorem \ref{LEH.sec.0.thm.1} with $\rho_5^-=2$ and $\rho_5^+=1$. Now assume that $d$ is equal to $6$. In that case, when $1\leq{z}\leq{2}$,
$$
\sum_{i=0}^{\lfloor{z}\rfloor}(-1)^i{d\choose{i}}(z-i)^{d-3}p_{i,d}(z)=-3z^5+\frac{81}{4}z^4-54z^3+72z^2-48z+\frac{63}{5}\mbox{.}
$$

This polynomial is negative when $z=1$ and positive, when $z=2$. It is also increasing in the interval $[1,2]$ (this can be easily seen from its derivative, a degree $4$ polynomial that admits $1$ and $2$ as its only real roots). Hence this polynomial has a unique root $\rho_6^+$ in the interval $]1,2[$.

Further observe that, when $2<z\leq{3}$,
\begin{multline*}
\sum_{i=0}^{\lfloor{z}\rfloor}(-1)^i{d\choose{i}}(z-i)^{d-3}p_{i,d}(z)=6z^5-\frac{147}{2}z^4+360z^3-882z^2+1080z-\frac{2637}{5}\mbox{.}
\end{multline*}

This polynomial is decreasing in the interval $]2,3[$ (which, again can be easily seen from its derivative), positive when $z=2$, and negative when $z=3$. Denoting by $\rho_6^-$ the only root of this polynomial contained in that interval, the desired result follows again from Theorem \ref{LEH.sec.0.thm.1}.

Finally, assume that $d$ is equal to $7$. Using the same kind of straightforward arguments (but using the first two derivatives instead of just the first), one easily shows that, 
there exist two numbers $z_1$ and $z_2$, the first in $]1,2[$ and the other in $]2,3[$, such that the quantity
\begin{equation}\label{LEH.sec.4.prop.1.eq.1}
\sum_{i=0}^{\lfloor{z}\rfloor}(-1)^i{d\choose{i}}(z-i)^{d-3}p_{i,d}(z)\mbox{,}
\end{equation}
thought of as a function of $z$, is strictly decreasing in $[1,z_1[$, strictly increasing in $]z_1,z_2[$, and strictly decreasing again in $]z_2,7/2]$. Moreover, (\ref{LEH.sec.4.prop.1.eq.1}) is negative when $z$ is equal to $1$, $3$, and $7/2$ and positive when $z=2$.

Therefore, there exist two numbers $\rho_7^-$ and  $\rho_7^+$, the first in $]2,3[$ and the other in $]1,2[$, such that (\ref{LEH.sec.4.prop.1.eq.1}) vanishes when $z$ is equal to either of them, is negative when $z$ belongs to $[1,\rho_7^+[\cup]\rho_7^-,7/2]$, and positive when $z$ is in $]\rho_7^+,\rho_7^-[$. Hence, the proposition follows once more from Theorem \ref{LEH.sec.0.thm.1}.
\end{proof}

\begin{rem}\label{LEH.sec.4.rem.1}
As discussed above, $\rho_4^+$ is equal to $3/4$ and $\rho_4^-$, given by (\ref{LEH.sec.4.eq.1}), is about $1.71229$. It is also explicit in the proof of Proposition \ref{LEH.sec.4.prop.1} that $\rho_5^+$ is $1$ and $\rho_5^-$ is $2$. However, $\rho_6^+$ and $\rho_6^-$ do not have exact expressions. They are about $1.39766$ and $2.46963$, respectively. Likewise, $\rho_7^+$ and $\rho_7^-$ cannot be expressed exactly, but they are about $1.77221$ and $2.9324$, respectively.
\end{rem}

A similar result can be obtained for low order sub-diagonals. This can be illustrated with order $4$ sub-diagonals of higher dimensional hypercubes. Indeed, assume that $n=4$ and $d>4$. Observe that, if $0<z<1$ then
$$
\sum_{i=0}^{\lfloor{z}\rfloor}(-1)^i{n\choose{i}}(z-i)^{n-3}=z\mbox{.}
$$

Moreover, if $1\leq{z}\leq{2}$,
\begin{equation}\label{LEH.sec.4.eq.2}
\sum_{i=0}^{\lfloor{z}\rfloor}(-1)^i{n\choose{i}}(z-i)^{n-3}=-3\!\left(z-\frac{4}{3}\right)\!\!\mbox{.}
\end{equation}

Therefore, according to Theorem \ref{LEH.sec.0.thm.2} and the discussion about $\rho_4^+$ and $\rho_4^-$ from the beginning of the section, if
$$
0<t<\frac{\sqrt{4}}{2}-\frac{\rho_4^-}{\sqrt{4}}\mbox{,}
$$
then the $(d-1)$-dimensional volume of $H\cap[0,1]^d$ is strictly locally maximal when $H$ is orthogonal to an order $4$ sub-diagonal of $[0,1]^d$. Likewise, if
$$
\frac{\sqrt{4}}{2}-\frac{4}{3}\frac{1}{\sqrt{4}}<t<\frac{\sqrt{4}}{2}-\frac{\rho_4^+}{\sqrt{4}}\mbox{,}
$$
where the $4/3$ is the left-hand side is the value of $z$ such that (\ref{LEH.sec.4.eq.2}) vanishes, then that volume is strictly locally minimal when $H$ is orthogonal to an order $4$ sub-diagonal of $[0,1]^d$. Moreover, by Theorem \ref{LEH.sec.3.5.thm.1} (where (\ref{LEH.sec.3.5.thm.1.eq.0.1}) and (\ref{LEH.sec.3.5.thm.1.eq.0.2}) are expressed using Lemmas \ref{LEH.sec.3.lem.1} and \ref {LEH.sec.3.5.lem.1}), if
$$
\frac{\sqrt{4}}{2}-\frac{\rho_4^-}{\sqrt{4}}<t<\frac{\sqrt{4}}{2}-\frac{4}{3}\frac{1}{\sqrt{4}}
$$
or
$$
\frac{\sqrt{4}}{2}-\frac{\rho_4^+}{\sqrt{4}}<t\leq\frac{\sqrt{4}}{2}
$$
then the $(d-1)$-dimensional volume of $H\cap[0,1]^d$ is not locally extremal when $H$ is orthogonal to an order $4$ sub-diagonal of $[0,1]^d$. This observation carries over to the next few sub-diagonal orders.

\begin{prop}\label{LEH.sec.4.prop.2}
Assume that $4\leq{n}\leq7$ and that $n<d$. There exists a number $\rho_n^\circ$ independent on $d$ such that $\rho_n^+<\rho_n^\circ<\rho_n^-$ and, if 
$$
t\in\!\left]0,\frac{\sqrt{n}}{2}-\frac{\rho_n^-}{\sqrt{n}}\right[\!
$$
then the $(d-1)$-dimensional volume of $H\cap[0,1]^d$ is strictly locally maximal when $H$ is orthogonal to an order $n$ sub-diagonal of $[0,1]^d$. However, if
$$
t\in\!\left]\frac{\sqrt{n}}{2}-\frac{\rho_n^\circ}{\sqrt{n}},\frac{\sqrt{n}}{2}-\frac{\rho_n^+}{\sqrt{n}}\right[\!
$$
then the $(d-1)$-dimensional volume of $H\cap[0,1]^d$ is strictly locally minimal when $H$ is orthogonal to an order $n$ sub-diagonal of $[0,1]^d$. Finally, if
$$
t\in\!\left]\frac{\sqrt{n}}{2}-\frac{\rho_n^-}{\sqrt{n}},\frac{\sqrt{n}}{2}-\frac{\rho_n^\circ}{\sqrt{n}}\right[\!\cup\!\left]\frac{\sqrt{n}}{2}-\frac{\rho_n^+}{\sqrt{n}},\frac{\sqrt{n}}{2}\right[\!
$$
then that volume is not locally minimal or maximal (even weakly so) when $H$ is orthogonal to an order $n$ sub-diagonal of $[0,1]^d$. 
\end{prop}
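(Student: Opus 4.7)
The plan is to combine Theorem~\ref{LEH.sec.0.thm.2} with Theorem~\ref{LEH.sec.3.5.thm.1}, applied at the point of $]0,+\infty[^n$ whose first $n$ coordinates are $1/\sqrt{n}$ (which suffices by the symmetries of the hypercube). The task reduces to determining, for each $n\in\{4,5,6,7\}$ and each $z$ in $(0, n/2)$, the signs of the two piecewise polynomials (\ref{LEH.sec.0.thm.2.eq.0}) and (\ref{LEH.sec.0.thm.2.eq.1}), viewed as functions of $z=n/2-t\sqrt{n}$.

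The signs of (\ref{LEH.sec.0.thm.2.eq.0}) are already settled by the argument given for Proposition~\ref{LEH.sec.4.prop.1}, since $p_{i,n}$ coincides with the $p_{i,d}$ used there after the formal substitution $d\to n$. In each of the four cases $n=4,5,6,7$, this produces the two numbers $\rho_n^+$ and $\rho_n^-$ of the statement, with $0<\rho_n^+<\rho_n^-<n/2$, such that (\ref{LEH.sec.0.thm.2.eq.0}) is negative on $(0,\rho_n^+)\cup(\rho_n^-,n/2)$ and positive on $(\rho_n^+,\rho_n^-)$. What remains is the analogous sign analysis for (\ref{LEH.sec.0.thm.2.eq.1}), together with the verification that its (unique) sign change $\rho_n^\circ$ satisfies the interlacing $\rho_n^+<\rho_n^\circ<\rho_n^-$.

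For $n=4$ this is already done in the text preceding the proposition: (\ref{LEH.sec.0.thm.2.eq.1}) equals $z$ on $(0,1)$ and $-3(z-4/3)$ on $[1,2]$, yielding $\rho_4^\circ=4/3$. For $n=5$, (\ref{LEH.sec.0.thm.2.eq.1}) is a piecewise quadratic whose only sign change occurs on $[1,2]$, where it becomes $-4z^2+10z-5$, so $\rho_5^\circ=(5+\sqrt{5})/4$. For $n=6$ and $n=7$, (\ref{LEH.sec.0.thm.2.eq.1}) has pieces that are cubic and quartic in $z$ respectively; I expect the relevant sign change to lie in $(2,3)$ in both cases, and the fact that it is the only sign change of (\ref{LEH.sec.0.thm.2.eq.1}) on $(0,n/2)$ can be established exactly as in Proposition~\ref{LEH.sec.4.prop.1}, by evaluating each piece at the integer endpoints and analysing its first, and if needed second, derivative in order to rule out additional roots. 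In every case the outcome is that (\ref{LEH.sec.0.thm.2.eq.1}) is positive on $(0,\rho_n^\circ)$ and negative on $(\rho_n^\circ,n/2)$, with $\rho_n^+<\rho_n^\circ<\rho_n^-$.

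The proposition then follows: on $(\rho_n^-, n/2)$ both (\ref{LEH.sec.0.thm.2.eq.0}) and (\ref{LEH.sec.0.thm.2.eq.1}) are negative and Theorem~\ref{LEH.sec.0.thm.2} gives a strict local maximum; on $(\rho_n^+,\rho_n^\circ)$ both are positive and Theorem~\ref{LEH.sec.0.thm.2} gives a strict local minimum; and on $(0,\rho_n^+)\cup(\rho_n^\circ,\rho_n^-)$ the two quantities are nonzero with opposite signs, so Theorem~\ref{LEH.sec.3.5.thm.1} excludes every local extremum. The main obstacle is the case-by-case verification for $n=6$ and $n=7$, where neither $\rho_n^\circ$ nor $\rho_n^-$ admits a closed form and the required interlacing $\rho_n^+<\rho_n^\circ<\rho_n^-$ must be justified by explicit polynomial comparisons, as already done in Proposition~\ref{LEH.sec.4.prop.1} for the companion polynomial (\ref{LEH.sec.0.thm.2.eq.0}); this is routine but lengthy, and arguably best offloaded to symbolic computation.
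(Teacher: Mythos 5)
Your proposal is correct and follows essentially the same route as the paper: reduce to the signs of (\ref{LEH.sec.0.thm.2.eq.0}) and (\ref{LEH.sec.0.thm.2.eq.1}) via Theorems \ref{LEH.sec.0.thm.2} and \ref{LEH.sec.3.5.thm.1}, reuse the sign analysis from Proposition \ref{LEH.sec.4.prop.1} for the first sum, and locate the unique sign change $\rho_n^\circ$ of the second sum piecewise for each $n$, checking the interlacing $\rho_n^+<\rho_n^\circ<\rho_n^-$. The only difference is that you defer the explicit polynomial computations for $n=6,7$ (where the paper writes out the pieces $-5z^3+18z^2-18z+6$, $10z^3-72z^2+162z-114$, etc., and locates $\rho_6^\circ\approx2.2407$ and $\rho_7^\circ\approx2.69068$ in $]2,3[$ as you anticipate), so your argument matches the paper's in substance.
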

\begin{proof}
The proposition is already proven above when $n$ is equal to $4$ and it can be assumed that $n$ is at least $5$. Following the argument used for the $4$\nobreakdash-dimensional case, it suffices to show that
\begin{equation}\label{LEH.sec.4.prop.2.eq.1}
\sum_{i=0}^{\lfloor{z}\rfloor}(-1)^i{n\choose{i}}(z-i)^{n-3}
\end{equation}
is positive when $z$ belongs to $]0,\rho_n^\circ[$ and negative when $z$ is in $]\rho_n^\circ,n/2[$, where $\rho_n^\circ$ is a number satisfying $\rho_n^+<\rho_n^\circ<\rho_n^-$. First observe that (\ref{LEH.sec.4.prop.2.eq.1}) is equal to $z^{n-3}$ when $0<z<1$, and, therefore is always positive in that case.

If $n$ is equal to $5$, (\ref{LEH.sec.4.prop.2.eq.1}) is equal to $-4z^2+10z-5$ when $1\leq{z}\leq2$ and to $6z^2-30z+35$ when $2<z\leq{5/2}$. Hence, it is positive when $z$ belongs to the interval $]0,\rho_5^\circ[$ and negative when $z$ is in the interval $]\rho_5^\circ,5/2[$, where
$$
\rho_5^\circ=\frac{5+\sqrt{5}}{4}\mbox{,}
$$
which is about $1.80902$. By Remark \ref{LEH.sec.4.rem.1}, $\rho_5^+<\rho_5^\circ<\rho_5^-$, as desired. Now assume that $n$ is equal to $6$. In that case, (\ref{LEH.sec.4.prop.2.eq.1}) is equal to
$$
-5z^3+18z^2-18z+6
$$
when $1\leq{z}\leq{2}$ and to
$$
10z^3-72z^2+162z-114
$$
when $2<z\leq{3}$. The former polynomial expression is positive when $z$ belongs to $[1,2]$, and the latter admits a single root $\rho_6^\circ$ in $[2,3]$ of about $2.2407$. Moreover, the latter expression is positive when $z$ belongs to $[2,\rho_6^\circ[$ and negative when $z$ belongs to $]\rho_6^\circ,3]$. As in addition, $\rho_6^\circ$ is strictly between $\rho_6^+$ and $\rho_6^-$ (see Remark~\ref{LEH.sec.4.rem.1}), the proposition holds in that case.

Finally assume that $n$ is equal to $7$. If $1\leq{z}\leq{2}$, then (\ref{LEH.sec.4.prop.2.eq.1}) is equal to
$$
-6z^4+28z^3-42z^2+28z-7\mbox{.}
$$

If $2<z\leq3$, then (\ref{LEH.sec.4.prop.2.eq.1}) is equal to
$$
15z^4-140z^3+462z^2-644z+329\mbox{.}
$$

If, however $3<z\leq7/2$, then (\ref{LEH.sec.4.prop.2.eq.1}) is equal to
$$
-20z^4+280z^3-1428z^2+3136z-2506\mbox{.}
$$

Note that the roots and positivity of these polynomial expressions can still be obtained exactly. In particular, it can be easily shown that (\ref{LEH.sec.4.prop.2.eq.1}) is positive when $z$ belongs to $[1,\rho_7^\circ[$ and negative when $z$ is in $]\rho_7^\circ,7/2]$, where $\rho_7^\circ$, a root of the second polynomial expression, is about $2.69068$. Since, by Remark \ref{LEH.sec.4.rem.1}, $\rho_7^+$ is about $1.77221$ and $\rho_7^-$ about $2.9324$, this proves the desired property.
\end{proof}

\begin{rem}\label{LEH.sec.4.rem.2}
Recall that $\rho_4^\circ$ is equal to $4/3$ and $\rho_5^\circ$ to
$$
\frac{5+\sqrt{5}}{4}\mbox{,}
$$
which is about $1.80902$. In fact, $\rho_6^\circ$ and $\rho_7^\circ$ (that are about $2.2407$ and $2.69068$, respectively) can also be expressed exactly. However, while $\rho_6^\circ$ can be expressed in a reasonably simple form as
$$
\rho_6^\circ=\frac{12}{5}-\frac{3}{5}\cos\!\left(\frac{1}{3}\arctan\!\left(\frac{5\sqrt{11}}{7}\right)\!\right)\!+\frac{3\sqrt{3}}{5}\sin\!\left(\frac{1}{3}\arctan\!\left(\frac{5\sqrt{11}}{7}\right)\!\right)\!\!\mbox{,}
$$
the exact expression of $\rho_7^\circ$ is too long to be written here.
\end{rem}

Interestingly, Propositions \ref{LEH.sec.4.prop.1} and \ref{LEH.sec.4.prop.2} still hold for dimensions and sub-diagonal orders much larger than $7$: they have been verified up to dimension and sub-diagonal order $300$ using symbolic computation, and can be expected to remain true beyond that. Approximate values of the corresponding $\rho_d^-$, $\rho_d^+$, and $\rho_d^0$ are reported in the following table when $8\leq{d}\leq35$.

\begin{table}[b]
\begin{tabular}{c|ccc||c|ccc}
$d$ & $\rho_d^-$ & $\rho_d^\circ$ & $\rho_d^+$ & $d$ & $\rho_d^-$ & $\rho_d^\circ$ & $\rho_d^+$\\
\hline
$8$ & $3.38859$ & $3.14086$ & $2.13730$ & $22$ & $9.99303$ & $9.62096$ & $7.86693$\\
$9$ & $3.85428$ & $3.59394$ & $2.52065$ & $23$ & $10.4705$ & $10.0911$ & $8.29529$\\
$10$ & $4.31894$ & $4.04931$ & $2.90984$ & $24$ & $10.9485$ & $10.5619$ & $8.72522$\\
$11$ & $4.78630$ & $4.50661$ & $3.30377$ & $25$ & $11.4269$ & $11.0332$ & $9.15661$\\
$12$ & $5.25481$ & $4.96566$ & $3.70286$ & $26$ & $11.9057$ & $11.5051$ & $9.58938$\\
$13$ & $5.72466$ & $5.42625$ & $4.10615$ & $27$ & $12.3849$ & $11.9775$ & $10.0234$\\
$14$ & $6.19563$ & $5.88821$ & $4.51316$ & $28$ & $12.8646$ & $12.4504$ & $10.4587$\\
$15$ & $6.66760$ & $6.35143$ & $4.92352$ & $29$ & $13.3445$ & $12.9237$ & $10.8952$\\
$16$ & $7.14049$ & $6.81577$ & $5.33689$ & $30$ & $13.8249$ & $13.3975$ & $11.3328$\\
$17$ & $7.61421$ & $7.28115$ & $5.75298$ & $31$ & $14.3055$ & $13.8717$ & $11.7714$\\
$18$ & $8.08869$ & $7.74749$ & $6.17156$ & $32$ & $14.7864$ & $14.3464$ & $12.2110$\\
$19$ & $8.56386$ & $8.21469$ & $6.59241$ & $33$ & $15.2677$ & $14.8214$ & $12.6515$\\
$20$ & $9.03967$ & $8.68271$ & $7.01536$ & $34$ & $15.7492$ & $15.2967$ & $13.0930$\\
$21$ & $9.51608$ & $9.15149$ & $7.44025$ & $35$ & $16.2310$ & $15.7725$ & $13.5353$\\
\end{tabular}
\end{table}

\noindent{\bf Acknowledgement.} The author would like to thank Hermann K{\"o}nig for sharing his notes on the problem that helped confirming the results obtained here, Artem Zvavitch for enlightening exchanges about hypercube sections, and Antoine Deza for helping with the preliminary versions of this article.

\bibliography{LocalExtremaHypercube}
\bibliographystyle{ijmart}

\end{document}